\definecolor{mycolor1}{rgb}{0.00000,0.44700,0.74100}
\definecolor{mycolor2}{rgb}{0.8500, 0.3250, 0.0980}
\definecolor{mycolor3}{rgb}{0.9290, 0.6940, 0.1250}
\definecolor{mycolor4}{rgb}{0.4940, 0.1840, 0.5560}
\definecolor{mycolor5}{rgb}{0.4660, 0.6740, 0.1880}
\newtheorem{definition}{Definition}[section]
\newtheorem{assumption}[definition]{Assumption}
\newtheorem{proposition}[definition]{Proposition}
\newtheorem{corollary}[definition]{Corollary}
\newtheorem{theorem}[definition]{Theorem}
\newtheorem{lemma}[definition]{Lemma}
\newtheorem*{theorem*}{Theorem}
\theoremstyle{remark}
\newtheorem{remark}[definition]{Remark}
\newenvironment{example}
  {\pushQED{\qed}\examplex}
  {\popQED\endexamplex}
\numberwithin{equation}{section}
\newcommand{\R}{\mathbb{R}}
\newcommand{\C}{\mathbb{C}}
\newcommand{\PP}{\mathbb{P}}
\newcommand{\Af}{\mathbb{A}}
\newcommand{\A}{\mathcal{A}}
\newcommand{\cA}{\mathcal{A}}
\title{\bf Adjoints and Canonical Forms of \\
Tree Amplituhedra}
\author{Kristian Ranestad, Rainer Sinn and Simon Telen}
\date{}
\begin{document}
\maketitle

\begin{abstract}
    \noindent We consider semi-algebraic subsets of the Grassmannian of lines in three-space called tree amplituhedra. These arise in the study of scattering amplitudes from particle physics. Our main result states that tree amplituhedra in ${\rm Gr}(2,4)$ are positive geometries. The numerator of their canonical form plays the role of the adjoint in Wachspress geometry, and is uniquely determined by explicit interpolation conditions.
\end{abstract}

\section{Introduction}

Amplituhedra are geometric objects arising in particle physics related to integral representations of scattering amplitudes. They were first defined by Arkani-Hamed and Trnka in \cite{arkani2014amplituhedron}. Such amplitudes play an important role in making predictions for particle scattering experiments. Though this is the main application, our paper assumes no background from physics. From a mathematical perspective, amplituhedra are interesting objects in their own right: they naturally generalize projective cyclic polytopes. We consider a particular family of amplituhedra which are semi-algebraic subsets of the real Grassmannian of lines in $\mathbb{R}\mathbb{P}^3$.

Conventionally, an amplituhedron depends on five parameters called $k, \ell, m, n, L$, see for instance \cite[Equation (6.36)]{arkani2017positive}. The values we consider are $k=m=2$, $L=0$ and $n$ is arbitary. When $L= 0$, the value of $\ell$ is irrelevant. The word ``tree'' in the title indicates the choice $L = 0$. In physics, this constrains the type of interactions allowed between particles. 

\begin{definition}[Amplituhedron] \label{def:amplituh}
Let $Z \in \R^{n \times 4}$ be a totally positive matrix, with $n \geq 4$. The \emph{$n$-point tree amplituhedron ${\cal A}_n$} is the image of the nonnegative Grassmannian ${\rm Gr}(2,n)_{\geq 0}$ under the rational map $Z : {\rm Gr}(2,n) \dashrightarrow {\rm Gr}(2,4)$, which takes a $2 \times n$ matrix $X$ to $X \cdot Z$.
\end{definition}

Here, $2 \times n$ matrices of rank $2$ represent lines in $\mathbb{RP}^{n-1}$. For brevity, we drop the words ``$n$-point'' and ``tree'' in our paper and call ${\cal A}_n$ simply the amplituhedron. To emphasize the dependence on $Z$, we sometimes write ${\cal A}_n(Z)$. Definitions of total positivity, nonnegative Grassmannians and more details on the map $Z$ are given in Section~\ref{sec:2}. 

\begin{remark} \label{rem:loopvstree}
The amplituhedron for $k = 2, m = 2, L = 0$ is equal to that for $k = 0, m = 4, \ell = 2, L = 1$ (which is called a \emph{one-loop amplituhedron}). Our choice of parameters is especially relevant in the physics application, see \cite[Section 6.6]{arkani2017positive}. 
\end{remark}

To see how Definition \ref{def:amplituh} generalizes polytopes, simply replace ${\rm Gr}(2,n)$ and ${\rm Gr}(2,4)$ by ${\rm Gr}(1,n) = \mathbb{RP}^{n-1}$ and ${\rm Gr}(1,4) = \mathbb{RP}^3$. The image of $(\mathbb{RP}^{n-1})_{\geq 0}$ under $Z$ is the convex hull of the rows of $Z$. Total positivity of $Z$ implies that the result is a cyclic polytope, see \cite{sturmfels1988totally}. 

 Amplituhedra have been studied intensively. Special attention has been given to certain subdivisions, called triangulations, of ${\cal A}_n$. These are strongly related to the positroid stratification of the nonnegative Grassmannian. Initial steps were taken in the original paper \cite{arkani2014amplituhedron}, and further efforts from the physics literature include \cite{arkani2018unwinding,franco2015anatomy}. Recently, these subdivisions were studied using the combinatorics of oriented matroid theory and plabic graphs \cite{lukowski2023positive,parisi2023m}. The case $m = 1$ is studied in \cite{mandelshtam2023combinatorics}, where the positivity constraint on the matrix~$Z$ is dropped.

Our interest in amplituhedra comes from a question in positive geometry \cite{arkani2017positive,lam2022invitation}. A positive geometry is a tuple $({\cal X},{\cal X}_{\geq 0})$ of an irreducible complex projective variety ${\cal X}$ and a semi-algebraic subset ${\cal X}_{\geq 0} \subset {\cal X}$. This semi-algebraic subset is characterized by the existence of a unique meromorphic top form on ${\cal X}$ determined recursively. That form is called the canonical form, and it is denoted by $\Omega({\cal X}_{\geq 0})$. We will recall the precise definition in Section \ref{sec:2}. It is known that polytopes in $\mathbb{P}^d$ are positive geometries, and so are positive parts of toric varieties and the nonnegative Grassmannian \cite{arkani2017positive}. Interestingly, the amplituhedron ${\cal A}_n$ inside the complex Grassmannian ${\rm Gr}_{\mathbb{C}}(2,4)$ has been one of the main inspirations for the definition of a positive geometry, but it was not known whether its canonical form exists (uniquely). The same is true for amplituhedra defined by other parameters. In \cite[Conjecture 2]{lam2022invitation}, it is explicitly conjectured that amplituhedra are positive geometries. Our main contribution is to settle this conjecture for the tree amplituhedra in Definition \ref{def:amplituh}.

\begin{theorem} \label{thm:main}
    For generic $Z$, the amplituhedron $({\rm Gr}_{\mathbb{C}}(2,4),{\cal A}_n)$ is a positive geometry. 
\end{theorem}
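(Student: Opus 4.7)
The plan is to exhibit the canonical form of $({\rm Gr}_{\mathbb{C}}(2,4),\mathcal{A}_n)$ explicitly in the shape suggested by the polytope analogy, namely
$$\Omega(\mathcal{A}_n) \;=\; \frac{N(AB)}{\prod_{i=1}^{n} \twistor{i\,i{+}1}}\, d\mu,$$
where indices are cyclic, $d\mu$ is a fixed top-degree holomorphic form on ${\rm Gr}(2,4)$, and $N$ is the sought-after \emph{adjoint} polynomial in the Plücker coordinates of the line $AB$. The first step is to describe the codimension-one boundary strata of $\mathcal{A}_n$ and to identify them as semi-algebraic pieces of the Schubert hypersurfaces $H_i := \{\twistor{i\,i{+}1}=0\}$ consisting of lines meeting the chord through $Z_i$ and $Z_{i+1}$. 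This uses the positroid stratification of ${\rm Gr}(2,n)_{\geq 0}$ together with the image of each codimension-one positroid cell under $Z$. Matching the Plücker degrees of $d\mu$ and of the denominator then forces $N$ to be a polynomial of Plücker degree $n-4$ in $AB$.

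The heart of the proof is to determine $N$ by interpolation. Along each facet hypersurface $H_i$, the residue of $\Omega$ must equal the canonical form of the three-dimensional boundary component of $\mathcal{A}_n$ contained in $H_i$. Subtracting the expected residue, whose poles lie on those adjacent facets whose intersection with $H_i$ is an actual codimension-two boundary stratum, forces $N$ to vanish on an explicit \emph{residual arrangement}: those intersections $H_i \cap H_j$ that are \emph{not} codimension-two boundary strata of $\mathcal{A}_n$, together with further loci imposed recursively by lower-dimensional strata. Interpreting these as linear conditions on the finite-dimensional space of polynomials of Plücker degree $n-4$, one must show that a nonzero $N$ exists and is unique up to scalar; this is the Wachspress-type existence-and-uniqueness statement referred to in the abstract.

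The main obstacle will be this interpolation step: enumerating the residual arrangement precisely, counting the linear conditions it imposes, and either matching this count against the dimension of the ambient polynomial space or producing $N$ directly by a deformation argument (e.g.\ starting from a symmetric $Z$, or inducting on $n$ from the base case $n=4$, where $\mathcal{A}_4 = {\rm Gr}(2,4)$ and $\Omega$ is the standard invariant volume form). An alternative route is a BCFW-style triangulation of $\mathcal{A}_n$ into images of positroid cells, summing the pullbacks of the Grassmannian's canonical form and clearing denominators to extract the explicit $N$ sought.

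Once $N$ is produced, verifying that the resulting $\Omega$ is the canonical form is an induction on the dimension of the boundary strata. The simple-pole condition along each $H_i$ is built into the ansatz; the residue along each $H_i$ agrees with the canonical form of the corresponding facet by the interpolation conditions imposed; and the recursion terminates at the vertices of $\mathcal{A}_n$, where a $0$-dimensional positive geometry has canonical form $\pm 1$. Uniqueness of the canonical form is automatic from its recursive definition, so producing $\Omega$ establishes existence and completes the proof.
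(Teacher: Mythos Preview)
Your plan is essentially the paper's approach: identify the facet hypersurfaces $H_i=\{\twistor{i\,i{+}1}=0\}$, describe the residual arrangement as those intersections of boundary strata that miss $\mathcal{A}_n$, prove existence and uniqueness of a degree-$(n-4)$ adjoint $N=\alpha_{\mathcal{A}_n}$ interpolating it, and then check the residue recursion. The paper does not use BCFW triangulation or induction on $n$; uniqueness of the adjoint is proved directly by a cascading argument (if a pencil of adjoints existed, some member would be forced to vanish on every $2$-stratum, hence on every $3$-stratum, hence on all of ${\rm Gr}(2,4)$).

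There is one genuine gap in your final paragraph. The interpolation conditions only guarantee that the residue of $\Omega$ along each facet $H_i$ is a \emph{scalar multiple} of the canonical form of that facet, and likewise down the flag. You assert that ``the residue along each $H_i$ agrees with the canonical form of the corresponding facet by the interpolation conditions imposed,'' but nothing you have said forces these scalars to be compatible: a priori one could normalize so that the iterated residue at one vertex is $+1$ and find some other vertex where it is $\lambda\neq\pm 1$. The paper closes this by proving that the $1$-skeleton of $\partial\mathcal{A}_n$ is connected, so that after fixing the scalar at one vertex the iterated residues at all other vertices are forced to be $\pm 1$ by walking along boundary edges (each edge being a segment in $\mathbb{P}^1$ whose canonical form has residues $+1,-1$ at its endpoints). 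Without this connectedness step, the recursion does not terminate correctly and the axioms of a positive geometry are not verified.
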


We study the amplituhedron and its boundaries from the perspective of semi-algebraic geometry. We describe the canonical form in terms of these boundaries. This is in the spirit of \cite{kohn2021adjoints}, which deals with positive geometries in the projective plane. The denominator of the canonical form is the product of defining polynomials of its codimension-one boundaries. The numerator is the defining polynomial $\alpha_{{\cal A}_n}$ of a 3-fold in ${\rm Gr}(2,4)$, called the adjoint 3-fold of ${\cal A}_n$. The reason for this name is that $\alpha_{{\cal A}_n}$ plays the role of the adjoint polynomial from Wachspress geometry \cite{gaetz,kohn2020projective}. 
It satisfies a concrete list of interpolation conditions, namely it vanishes on an arrangement of curves in ${\rm Gr}(2,4)$ defined by Schubert conditions called the residual arrangement. The need for the study of this residual arrangement was expressed explicitly by Lam in \cite{lam2022invitation} (bottom of page 15).
The interpolation conditions were found in \cite[Section 2.2]{arkani2015positive} as well. 
We deduce them from a full description of the face structure of ${\cal A}_n$ (Proposition \ref{prop:boundary faces}). We show that the residual arrangement
determines $\alpha_{{\cal A}_n}$ uniquely (Theorem~\ref{adjoint}). The adjoint gives the meromorphic top form  $\Omega({\cal A}_n)$ and we verify the axioms for a positive geometry from \cite{arkani2017positive} to prove Theorem \ref{thm:main}. We are optimistic that the techniques we use in this article can be used to prove the same result for general tree amplituhedra.

This paper is structured as follows. Section \ref{sec:2} contains preliminaries. We elaborate on Definition \ref{def:amplituh}. We recall an alternative definition in terms of inequalities on ${\rm Gr}(2,4)$ and state the definition of a positive geometry. Section \ref{sec:3} describes the algebraic boundary $\partial_a{\cal A}_n$ of ${\cal A}_n$. That is, the Zariski closure in ${\rm Gr}(2,4)$ of its Euclidean boundary. For lack of a precise reference, we include a proof of this description (Proposition \ref{prop:algboundary}). We also introduce a stratification of $\partial_a {\cal A}_n$ analogous to the face lattice of a polytope. This is used in Section \ref{sec:4} to identify the residual arrangement ${\cal R}({\cal A}_n)$. In Section \ref{sec:5}, we show that there exists a unique degree $(n-4)$-form in the coordinate ring of ${\rm Gr}(2,4) \subset \PP^5$ which interpolates ${\cal R}({\cal A}_n)$. We use this to find the canonical form $\Omega({\cal A}_n)$ in Section \ref{sec:6}, and prove Theorem~\ref{thm:main}. 

\section{Amplituhedra and positive geometries} \label{sec:2}
We start with a clarification of the concepts and notation in Definition \ref{def:amplituh}. The real Grassmannian ${\rm Gr}_{\mathbb{R}}(k,m)$ is the space of $k$-dimensional $\R$-vector spaces in $\R^m$. Equivalently, it is the space of projective $(k-1)$-planes in the real projective space $\mathbb{R}\PP^{m-1}$. To simplify the notation, we will drop $\mathbb{R}$ and work implicitly over the real numbers, unless indicated otherwise. The Grassmannian is a projective variety via its Pl\"ucker embedding ${\rm Gr}(k,m) \hookrightarrow \PP^{\binom{m}{k} - 1}$. This sends $V \in {\rm Gr}(k,m)$ to the tuple of $k \times k$ minors of a $k \times m$ matrix with row span $V$. 

\begin{example}[$k =2, m = 4$] \label{ex:G24}
The Grassmannian ${\rm Gr}(2,4)$ consists of all lines in projective 3-space. The line through the points $(x_1:x_2:x_3:x_4)$ and $(y_1:y_2:y_3:y_4)$ in $\mathbb{P}^3$ is represented by the $2 \times 4$ matrix $X = \begin{pmatrix} x_1 & x_2 & x_3 & x_4 \\ y_1 & y_2 & y_3& y_4 \end{pmatrix}$. The Pl\"ucker embedding is given by 
\begin{equation} \label{eq:pluckemb}
\begin{pmatrix} x_1 & x_2 & x_3 & x_4 \\ y_1 & y_2 & y_3& y_4 \end{pmatrix} \longmapsto (\, x_iy_j-x_jy_i \, )_{1 \leq i < j \leq 4} \quad \in \PP^5. 
\end{equation}
The homogeneous coordinate indexed by $(i,j)$ on $\PP^5$ is denoted by $p_{ij}$, and \eqref{eq:pluckemb} identifies ${\rm Gr}(2,4)$ with the quadratic hypersurface 
\begin{equation} \label{eq:plucker}
{\rm Gr}(2,4) = \{ (p_{ij})_{1 \leq i < j \leq 4} \in \PP^5 ~|~ p_{12}p_{34} - p_{13}p_{24} + p_{14}p_{23} = 0 \}. 
\end{equation}
The quadratic equation in \eqref{eq:plucker} is called the \emph{Pl\"ucker relation} for ${\rm Gr}(2,4)$. 
\end{example}
Generalizing Example \ref{ex:G24}, the \emph{Pl\"ucker coordinates} on $\PP^{\binom{m}{k} -1}$ are denoted by $p_I$, where $I$ ranges over all $k$-element subsets of $[m] = \{1, \ldots, m \}$. A $k$-dimensional vector space $V \subset \R^m$ is called \emph{totally nonnegative} (resp.~\emph{totally positive}) if all of its Pl\"ucker coordinates satisfy $p_I(V) \geq 0$ (resp.~$p_I(V) > 0$). Of course, the sign of the Pl\"ucker coordinates is only defined up to projective scaling, so here `$p_I(V) \geq 0$ for all $I$' is short for `all (nonzero) Pl\"ucker coordinates have the same sign'. Since the sign of a real homogeneous polynomial of even degree is well-defined in projective space, we can write these inequalities as $p_I(V) p_J(V)\geq 0$ for all pairs of $k$-element subsets $I,J\subset [m]$.
The set of all totally nonnegative points in ${\rm Gr}(k,m)$ is the \emph{totally nonnegative Grassmannian} ${\rm Gr}(k,m)_{\geq 0}$. The \emph{totally positive Grassmannian} ${\rm Gr}(k,m)_{>0}$ is defined analogously. A $k \times m$ matrix $X$ is called \emph{totally nonnegative} (resp.~\emph{totally positive}) if its row space is totally nonnegative (resp.~totally positive) in ${\rm Gr}(k,m)$. Below, we drop the word totally in this terminology for convenience.

Let $Z \in \R^{n \times m}$ be a positive (i.e., totally positive) matrix, with $n \geq m$. This defines a linear map $X \mapsto X \cdot Z$ sending $k \times n$ matrices to $k \times m$ matrices. It is easy to see that this map is well-defined modulo the action of ${\rm GL}_k(\R)$ on $k \times n$ and $k \times m$ matrices, so that it gives a rational map $Z: {\rm Gr}(k,n) \dashrightarrow {\rm Gr}(k,m)$. This is well-defined on $k$-dimensional vector spaces in $\R^n$ represented by a $k \times n$ matrix $X$ such that $X \cdot Z$ has rank $k$. It turns out that if $X$ is nonnegative, positivity of $Z$ ensures that $X \cdot Z$ has rank $k$ \cite[Sec.~4]{arkani2014amplituhedron}. Therefore, the image $Z({\rm Gr}(k,n)_{\geq 0}) \subset {\rm Gr}(k,m)$ of the nonnegative Grassmannian is well-defined. For $k = 2, m = 4$, this is our amplituhedron ${\cal A}_n$ from Definition \ref{def:amplituh}.

We now recall a different, equivalent definition of ${\cal A}_n$ in terms of inequalities on ${\rm Gr}(2,4)$. This first appeared in \cite{arkani2018unwinding}, and was proved for our amplituhedra in \cite[Theorem 5.1]{parisi2023m}. We think of the rows of our matrix $Z \in \R^{n \times 4}$ as points $Z_1, \ldots, Z_n$ in $\PP^3$. It is customary to denote the line in ${\rm Gr}(2,4)$ through points $A, B \in \PP^3$, $A \neq B$, by $AB$. Write $p_{12}, \ldots, p_{34}$ for the Pl\"ucker coordinates of this line and denote the Pl\"ucker coordinates of $Z_iZ_j$ by 
\[ Z_iZ_j \, = \, (q^{ij}_{12}: q^{ij}_{13}: q^{ij}_{14} : q^{ij}_{23} : q^{ij}_{24} : q^{ij}_{34}) \, \in \PP^5.\]
Note that the positivity condition on $Z$ implies $Z_i \neq Z_j$. Requiring that the line $AB$ meets the line $Z_iZ_j$ gives a linear condition on the Pl\"ucker coordinates of $AB$, namely
\begin{equation} \label{eq:ABij} 
\langle ABij \rangle \, = \, p_{12} \, q^{ij}_{34} - p_{13}  \, q^{ij}_{24} + p_{14} \, q^{ij}_{23} + p_{23} \, q^{ij}_{14} - p_{24} \, q^{ij}_{13} + p_{34} \, q^{ij}_{12} \, = \, 0,
\end{equation}
which is the determinant of the $4\times 4$ matrix with columns $A$, $B$, $Z_i$, and $Z_j$. The amplituhedron ${\cal A}_n$ can be defined in terms of sign conditions on these linear expressions. 

\begin{definition} \label{def:amplituhedron2}
Let $Z \in \R^{n \times 4}$ be a totally positive matrix. The subset ${\cal F}^\circ_n(Z)$ is the semi-algebraic subset of ${\rm Gr}(2,4)$ consisting of points $(p_{12}: \cdots: p_{34})$ at which
\begin{enumerate}
    \item all linear forms $\langle ABi(i+1) \rangle, i = 1, \ldots, n-1$ and $\langle AB 1n \rangle$ from 
    \eqref{eq:ABij} are nonzero and have the same~sign,
    \item the sequence $(  \langle AB12 \rangle  , \,  \langle AB13 \rangle  , \, \ldots , \, \langle AB1n \rangle )$ has exactly two sign flips, ignoring zeros.
\end{enumerate}
\end{definition}
This Definition \ref{def:amplituhedron2} gives an alternative, inequality description of the amplituhedron. To our knowledge, equivalence of the two definitions was first shown in {\cite[Theorem 5.1]{parisi2023m}}. The notation ${\cal F}^\circ_n(Z)$ is inspired by that theorem, and the next lemma follows from its proof. 
\begin{lemma} \label{lem:equivdef}
The \emph{amplituhedron} ${\cal A}_n(Z)$ is the closure of ${\cal F}_n^\circ(Z)$ in ${\rm Gr}(2,4)$. More precisely,
\[ Z({\rm Gr}(2,n)_{>0}) \, \subset \, {\cal F}^\circ_n(Z) \, \subset \, {\cal A}_n(Z),\]
and all inclusions turn into equalities when taking the closure in the Euclidean topology.
\end{lemma}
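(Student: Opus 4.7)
The plan is to establish the two inclusions $Z({\rm Gr}(2,n)_{>0}) \subset {\cal F}^\circ_n(Z) \subset {\cal A}_n(Z)$ separately, and then deduce the closure equality by a topological argument.

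For the first inclusion, I would let $X \in \R^{2\times n}$ be totally positive and write $A,B$ for the rows of $X\cdot Z$. By multilinearity of the $4\times 4$ determinant,
\[ \langle AB\,ij\rangle \,=\, \det(A,B,Z_i,Z_j) \,=\, \sum_{1\le k<l\le n} p_{kl}(X)\,\det(Z_k,Z_l,Z_i,Z_j), \]
and each right-hand determinant is, up to the sign of the permutation sorting $(k,l,i,j)$, a $4\times 4$ minor of $Z$. Since $X$ and $Z$ are totally positive, every $p_{kl}(X)$ and every sorted minor of $Z$ is strictly positive. A direct sign-bookkeeping then verifies both conditions in Definition~\ref{def:amplituhedron2}: for $(i,j) = (i,i{+}1)$ and for $(i,j) = (1,n)$, the signs of $\langle AB\,ij\rangle$ are controlled by the same permutation pattern on $(k,l,i,j)$, so they agree; and the sequence $(\langle AB\,1j\rangle)_{j=2}^n$ is then seen to flip sign exactly twice, as the relative position of $j$ among the other indices crosses the boundaries of two consecutive sign regimes.

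The inclusion ${\cal F}^\circ_n(Z) \subset {\cal A}_n(Z)$ is the substantive step, and I expect it to be the main obstacle. I would invoke the proof of \cite[Theorem 5.1]{parisi2023m}, which constructs a positroid triangulation of the amplituhedron: the authors exhibit a collection of open positroid cells in ${\rm Gr}(2,n)_{\geq 0}$ whose images under $Z$ are pairwise disjoint and exactly cover ${\cal F}^\circ_n(Z)$. Since each such cell lies in ${\rm Gr}(2,n)_{\geq 0}$, its image lies in ${\cal A}_n(Z)$, which yields the claimed inclusion. We do not re-derive this triangulation; we simply extract this consequence.

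For the closure statement, I note that ${\rm Gr}(2,n)_{\geq 0}$ is compact and the map $X \mapsto X\cdot Z$ is continuous on it, since $X\cdot Z$ has rank $2$ for every nonnegative $X$ by the Arkani-Hamed--Trnka result already cited in the preliminaries. Hence ${\cal A}_n(Z)$ is closed, and the previous step gives $\overline{{\cal F}^\circ_n(Z)} \subset {\cal A}_n(Z)$. Conversely, every nonnegative $X$ is a Euclidean limit of positive $X_\epsilon \in {\rm Gr}(2,n)_{>0}$; by the first step the images $X_\epsilon Z$ lie in ${\cal F}^\circ_n(Z)$, and continuity gives $XZ \in \overline{{\cal F}^\circ_n(Z)}$, so ${\cal A}_n(Z) \subset \overline{{\cal F}^\circ_n(Z)}$. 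The two inclusions combine to give the closure equality.
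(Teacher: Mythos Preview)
The paper does not supply its own proof of this lemma: immediately before the statement it says that the equivalence ``was first shown in \cite[Theorem~5.1]{parisi2023m}'' and that ``the next lemma follows from its proof.'' So there is nothing to compare against beyond that citation, and your proposal is entirely consistent with the paper's approach: for the substantive inclusion ${\cal F}^\circ_n(Z)\subset{\cal A}_n(Z)$ you invoke exactly the same reference, and your closure argument (compactness of ${\rm Gr}(2,n)_{\ge 0}$, continuity of $Z$, density of ${\rm Gr}(2,n)_{>0}$) makes explicit what the paper leaves implicit.

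One point to flag: in your treatment of the first inclusion, the verification of condition~1 in Definition~\ref{def:amplituhedron2} via Cauchy--Binet is indeed routine sign bookkeeping (all contributing terms in $\langle AB\,i(i{+}1)\rangle$ and $\langle AB\,1n\rangle$ carry the same sign). But the claim that the sequence $(\langle AB\,1j\rangle)_{j}$ has \emph{exactly} two sign flips is not just bookkeeping: for intermediate $j$ the Cauchy--Binet expansion has terms of both signs, so positivity of $X$ and $Z$ alone does not pin down the sign of each $\langle AB\,1j\rangle$, and one needs a further argument to control the number of flips. This is precisely the content of the sign-flip characterisation in \cite{arkani2018unwinding} and is part of what is proved in \cite[Theorem~5.1]{parisi2023m}; since you are already citing the latter for the harder inclusion, it is cleanest to cite it here as well rather than suggest the verification is elementary.
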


\begin{example}[$n=4$]
All lines $AB$ in the open subset ${\cal F}^{\circ}_4(Z)$ of the amplituhedron ${\cal A}_4$ satisfy 
\begin{equation}  \label{eq:onlyfirstineq}\langle AB12 \rangle > 0, \quad \langle AB23 \rangle > 0, \quad \langle AB34 \rangle > 0, \quad  \langle AB14 \rangle > 0. 
\end{equation}
This is point 1 in Definition \ref{def:amplituhedron2}. Together with the Pl\"ucker relation \eqref{eq:plucker}, this ensures that $\langle AB13 \rangle$ and $\langle AB24 \rangle$ have the same sign. Hence, the inequalities \eqref{eq:onlyfirstineq} define an open semi-algebraic set with two connected components, each corresponding to a sign choice for $\langle AB13 \rangle$ and $\langle AB24 \rangle$. Since ${\rm Gr}(2,n)_{>0}$ is an open ball \cite{galashin2022totally}, $Z({\rm Gr}(2,n)_{>0})$ is connected by Lemma \ref{lem:equivdef}. Point 2 in Definition \ref{def:amplituhedron2} fixes the signs of $\langle AB13 \rangle$ and $\langle AB24 \rangle$ to be `$-$', and selects one of two connected components.
\end{example}

The aim of the rest of this section is to recall the definition of a positive geometry, as introduced in \cite{arkani2017positive}. A nice overview of the state of the art on this topic is found in \cite{lam2022invitation}.

Let ${\cal X}$ be a $d$-dimensional irreducible complex projective variety, defined over $\mathbb{R}$. Let ${\cal X}_{\geq 0}$ be a $d$-dimensional semi-algebraic subset of the real points ${\cal X}(\mathbb{R})$ of ${\cal X}$. We require that the interior ${\cal X}_{>0}$ of ${\cal X}_{\geq 0}$ (with respect to the Euclidean topology on ${\cal X}(\mathbb{R})$) is an oriented manifold, such that its closure is ${\cal X}_{\geq 0}$. The algebraic boundary $\partial_a {\cal X}_{\geq 0}$ is the Zariski closure in ${\cal X}$ of ${\cal X}_{\geq 0} \setminus {\cal X}_{>0}$. The irreducible components of $\partial_a {\cal X}_{\geq 0}$ are prime divisors $D_1, \ldots, D_r$ on ${\cal X}$ (see \cite[Lemma 3.2(a)]{so2}). We define $(D_i)_{\geq 0}$ as the closure in $D_i(\mathbb{R})$ of the interior of~$D_i \cap {\cal X}_{\geq 0}$. 
\begin{definition} \label{def:posgeom}
    The pair $({\cal X}, {\cal X}_{\geq 0})$ is called a \emph{positive geometry} if there exists a unique meromorphic $d$-form $\Omega({\cal X}_{\geq 0})$ on ${\cal X}$, called \emph{canonical form}, satisfying the following axioms.
    \begin{itemize}
        \item If $d > 0$, $\Omega({\cal X}_{\geq 0})$ has poles only along $D_1, \ldots, D_r$. Moreover, for each algebraic boundary component $D_i$, $\Omega({\cal X}_{\geq 0})$ has a simple pole along $D_i$ and the Poincar\'e residue ${\rm Res}_{D_i}\Omega({\cal X}_{\geq 0})$ equals the canonical form of the positive geometry $(D_i, (D_i)_{\geq 0})$. Here $(D_i)_{>0}$ is an oriented manifold. Its  orientation is induced by that of ${\cal X}_{>0}$.
        \item If $d = 0$, ${\cal X} = {\cal X}_{\geq 0}$ is a point and $\Omega({\cal X}_{\geq 0}) = \pm 1$. The sign is the orientation. 
    \end{itemize}
\end{definition}

The amplituhedron ${\cal A}_n \subset {\rm Gr}_{\mathbb{C}}(2,4)$ was conjectured to be a positive geometry, see for instance  \cite[Conjecture 2]{lam2022invitation}. Polytopes in projective space are known to be positive geometries. Their canonical form provides a recipe for constructing canonical forms more generally.
\begin{example}[${\cal X} = \mathbb{CP}^1$] \label{ex:canformlinesegment} A line segment $[a,b]\subset \R = \{ (x,1)\colon x\in \R\} \subset \mathbb{RP}^1$ defines the positive geometry $(\C\PP^1,[a,b])$ in the complex projective line. The canonical form $\Omega([a,b])$ is given, in the local coordinate $x$, by 
\[ \Omega ([a,b]) \, = \, \frac{b-a}{(x-a)(b-x)} \, {\rm d} x. \]
This form has poles only along the boundary points of our line segment. From the identity
\[ \Omega([a,b]) \, = \, \left (\frac{b-a}{b-x}  \right ) \, \frac{{\rm d}(x-a)}{x-a}\]
we see that ${\rm Res}_{x=a}\Omega([a,b]) = 1$. Indeed, recall that the residue is obtained by substituting $x = a$ in the expression between parentheses. Similarly, ${\rm Res}_{x=b}\Omega([a,b]) = -1$.
\end{example}
\begin{example}[${\cal X} = \mathbb{CP}^2$] \label{ex:canformpentagon}
   Consider the pentagon $P$ in $\R^2$ with vertices $(1,0)$, $(3,0)$, $(4,2)$, $(2,4)$, and $(0,2)$.  Via $\R^2 = \{(x,y,1)\colon x,y\in \R\} \subset \R\PP^2$, we view this as a semi-algebraic subset of $\mathbb{CP}^2$. We claim that $(\C\PP^2,P)$ is a positive geometry with canonical form
   \[ \Omega(P) \, = \, \frac{ -4 \cdot (-4x^2 +16x  - 2y^2 + 28y + 48)}{(2x + y - 2)\cdot( x - y + 2)\cdot  y \cdot( -2x + y + 6)\cdot( -x - y + 6)} \, \, {\rm d}x \wedge {\rm d} y.\]
   The denominator of $\Omega(P)$ is the product of the lines containing the edges of $P$ and hence has the right poles. To determine the residue along $y = 0$ in the coordinate $x$, we first switch ${\rm d}x \wedge {\rm d}y$ to $-{\rm d}y \wedge {\rm d}x$, then drop ${\rm d}y$ and the factor $y$ in the denominator, and finally substitute $y = 0$. The result is 
   \[ {\rm Res}_{y=0}\, \Omega(P) \, = \, \frac{4 \cdot(-4x^2 + 16x + 48)}{(2x-2)(x+2)(-2x+6)(-x+6)} \, \frac{{\rm d} x}{x} \, = \, \frac{2}{(x-1)(3-x)} \, \frac{{\rm d} x}{x}.\]
   We have seen in Example \ref{ex:canformlinesegment} that this is indeed the canonical form of the edge $\{y=0\} \cap P$ in its Zariski closure. Notice that the numerator of $\Omega(P)$ is such that, when restricted to any of the lines in the algebraic boundary of $P$, it cancels two poles which are not vertices (here $x+2$ and $-x+6$). This numerator is called the \emph{adjoint} of $P$. Its cancellation property implies that the adjoint interpolates the \emph{residual arrangement} of $P$. For any simple polytope $P$, this is defined as the union of all intersections of boundary components which do not intersect $P$ \cite{kohn2020projective}. For a polygon, the residual arrangement consists of points. Figure \ref{fig:adjpentagon} illustrates this in our example. The residual arrangement is in blue, the adjoint conic is in red. 
    \begin{figure}
        \centering
        \includegraphics[height = 6cm]{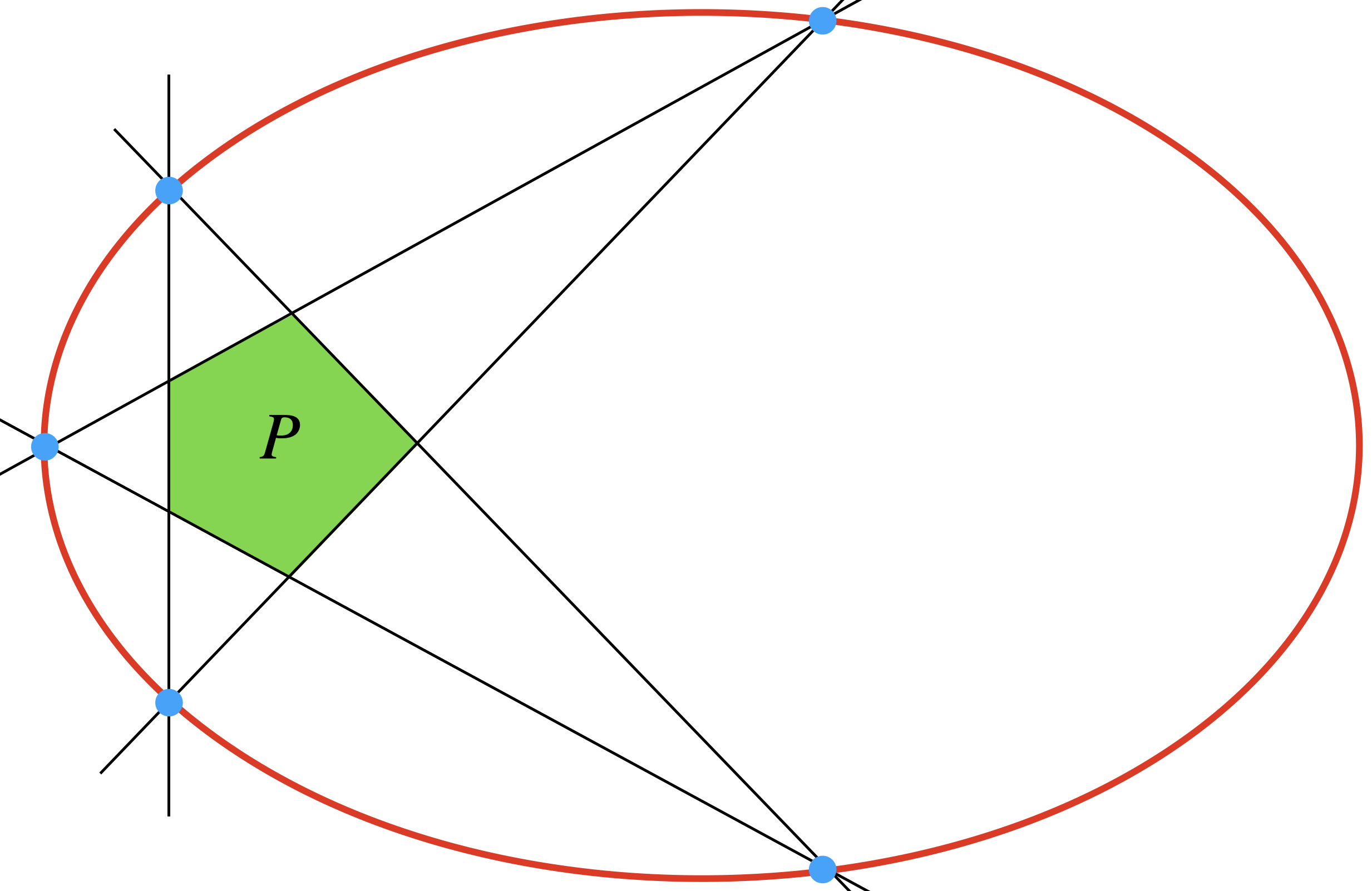}
        \caption{The adjoint of a pentagon is a conic through five residual points }
        \label{fig:adjpentagon}
    \end{figure}
   \end{example}
Example \ref{ex:canformpentagon} illustrates the importance of the algebraic boundary and the adjoint hypersurface in the study of positive geometries. Our strategy for writing down the canonical form of $\Omega({\cal A}_n)$ rests on this. We identify and stratify the algebraic boundary first (Section \ref{sec:3}), and then switch to the residual arrangement and the adjoint (Sections \ref{sec:4} and \ref{sec:5}). 

\begin{remark} \label{rem:sign}
We want to point out that the sign of the residue at a boundary vertex of ${\cal X}_{\geq 0}$ depends on the order in which the iterated residues are taken. One can verify this in the simple example $\Omega(\mathbb{R}^2_{\geq 0}) \, = \, \frac{1}{xy} \, {\rm d} x \wedge {\rm d}y$, for the point $(0,0)$. 
\end{remark}

\section{Algebraic boundary} \label{sec:3}
The \emph{boundary} $\partial {\cal A}_n$ of the amplituhedron ${\cal A}_n$ is ${\cal A}_n \setminus {\rm int}({\cal A}_n)$, where ${\rm int}(\cdot)$ takes the interior with respect to the Euclidean topology on ${\rm Gr}_{\mathbb{R}}(2,4) \subset \mathbb{P}^5$. The \emph{algebraic boundary} $\partial_a {\cal A}_n$ is defined as the Zariski closure in ${\rm Gr}_{\mathbb{C}}(2,4)$ of the boundary $\partial {\cal A}_n$. Though its description appears to be well-known and widely used, see for instance \cite{arkani2017positive,arkani2015positive}, we could not find a precise reference. We include the following statement with a self-contained proof for completeness. 

\begin{proposition} \label{prop:algboundary}
    The algebraic boundary $\partial_a{\cal A}_n$ of ${\cal A}_n$ $(n \geq 4)$ is the union of the following hyperplane sections of ${\rm Gr}_{\mathbb{C}}(2,4)$: $\langle ABi(i+1) \rangle = 0, \, i = 1, \ldots, n-1$, and $\langle AB1n \rangle = 0$. 
\end{proposition}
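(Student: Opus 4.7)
My approach is to prove two inclusions: first that $\partial{\cal A}_n$ is contained in the union of the listed hyperplane sections, and then that each such section appears as an irreducible component of the Zariski closure $\partial_a{\cal A}_n$.

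For the inclusion $\partial_a{\cal A}_n \subseteq \bigcup_{i=1}^{n-1}\{\langle ABi(i+1)\rangle = 0\} \cup \{\langle AB1n\rangle = 0\}$, let $U \subset {\rm Gr}(2,4)$ be the Zariski-open locus where each linear form $\langle ABi(i+1)\rangle$ $(i = 1, \ldots, n-1)$ and $\langle AB1n\rangle$ is nonzero. It suffices to show that ${\cal A}_n \cap U$ is Euclidean-open, so that $\partial{\cal A}_n \cap U = \emptyset$. The real points of $U$ are partitioned into open chambers by the signs of all linear forms appearing in Definition~\ref{def:amplituhedron2}, including the intermediate $\langle AB1j\rangle$ for $2 < j < n$; by Lemma~\ref{lem:equivdef}, ${\cal F}^\circ_n(Z) \cap U$ is precisely one such chamber $C_0$. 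If some $p \in U \cap {\cal A}_n$ lay in a different chamber $C_p$, a small Euclidean neighborhood $V$ of $p$ would sit inside $C_p$; but $p \in {\cal A}_n = \overline{{\cal F}^\circ_n(Z)}$ forces $V$ to meet ${\cal F}^\circ_n(Z) \subset C_0$, and disjointness of chambers contradicts $C_p \neq C_0$. Hence ${\cal A}_n \cap U = {\cal F}^\circ_n(Z) \cap U$ is Euclidean-open and $\partial{\cal A}_n$ lies in the complement of $U$.

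For the reverse inclusion, each hyperplane $H$ in the list cuts ${\rm Gr}_{\mathbb{C}}(2,4)$ in an irreducible $3$-fold (a Schubert divisor of lines meeting a fixed $Z_iZ_{i+1}$ or $Z_1Z_n$), so it is enough to exhibit a Zariski-dense, semi-algebraic $3$-dimensional subset of $H \cap {\rm Gr}(2,4)$ inside $\partial{\cal A}_n$. I would construct it as the Euclidean closure in $H$ of a three-parameter family of points in ${\cal F}^\circ_n(Z)$ obtained by letting $\langle ABi(i+1)\rangle \to 0^+$ while keeping every other inequality strict. Via the parametrization $Z\colon {\rm Gr}(2,n)_{\geq 0}\to {\cal A}_n$, this corresponds to deforming a generic $X_0 \in {\rm Gr}(2,n)_{>0}$ toward a codimension-one positroid face whose image lands on $H$; a generic rank count for $Z$ then shows the limit set has Euclidean dimension $3$ and is therefore Zariski-dense in the irreducible $3$-fold $H \cap {\rm Gr}(2,4)$.

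The main obstacle is the chamber argument in the first inclusion. Although the conclusion feels intuitive, it crucially uses that ${\cal A}_n$ is the closure of a \emph{single} connected open chamber (Lemma~\ref{lem:equivdef}); this is precisely what rules out the ``middle'' hyperplanes $\{\langle AB1j\rangle = 0\}$ with $2 < j < n$ from appearing as codimension-one components of $\partial_a{\cal A}_n$. A secondary technicality in the second inclusion is verifying that each facet-hyperplane meets ${\cal A}_n$ in a genuine codimension-one piece rather than a lower-dimensional intersection, which is where the generic rank count on the positroid boundary strata becomes essential.
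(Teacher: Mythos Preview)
Your overall strategy matches the paper's: restrict the candidate divisors using the inequality description, then show each $\langle AB\,i(i{+}1)\rangle=0$ genuinely contributes a $3$-dimensional piece via explicit positroid cells. The second inclusion is fine as a sketch and is essentially what the paper does.

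The first inclusion, however, has a real gap. You assert that $\mathcal{F}^\circ_n(Z)\cap U$ is a \emph{single} chamber $C_0$ of the arrangement cut by \emph{all} the forms $\langle AB1j\rangle$, and you cite Lemma~\ref{lem:equivdef} for this. But Lemma~\ref{lem:equivdef} only gives connectedness of (the interior of) $\mathcal{A}_n$; it does not say $\mathcal{F}^\circ_n(Z)$ is a single sign chamber, and in fact it is not. The two sign flips in condition~2 can occur at different positions, so $\mathcal{F}^\circ_n(Z)$ is a union of several open chambers together with the pieces of the walls $\{\langle AB1j\rangle=0\}$ between them (condition~2 explicitly allows zeros). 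Your closure argument therefore breaks exactly at the points you most need to analyze: a point $p\in U$ with $\langle AB1j\rangle=0$ lies on a wall separating two chambers, and nothing you have said decides whether $p$ is interior or boundary.

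This is precisely the content the paper supplies and that your proposal is missing. Using the projection of Lemma~\ref{lem:projaway}, the paper shows that if $p\in U$ satisfies $\langle AB1j\rangle=0$ (and all $\langle AB\,i(i{+}1)\rangle\neq 0$, all other $\langle AB1j'\rangle\neq 0$), then $\langle AB1(j{-}1)\rangle$ and $\langle AB1(j{+}1)\rangle$ necessarily have opposite signs; hence both adjacent chambers satisfy the two-sign-flip condition and $p$ is an interior point of $\mathcal{A}_n$. That angular computation is the step that actually ``rules out the middle hyperplanes''; connectedness alone cannot do it, since a connected open set can perfectly well have a codimension-one wall in its closure. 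You should replace the chamber argument by this local analysis at the $\langle AB1j\rangle=0$ loci.
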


Introducing the ``cyclic'' notation $\langle ABn(n+1) \rangle = \langle AB1n \rangle$ allows to simplify the notation slightly. The algebraic boundary components are then given by $\langle ABi(i+1) \rangle = 0, \, i \in [n]$. Our proof of Proposition \ref{prop:algboundary} uses the following lemma.

\begin{lemma} \label{lem:projaway}
    Let $AB \in {\rm Gr}(2,4)$ be a real line in $\PP^3$ and let $Y \in \mathbb{R}^{2 \times 4}$ be a matrix representing $AB$. The \emph{projection away from $AB$} is represented by a matrix $Y^\perp \in \mathbb{R}^{2 \times 4}$ whose row span is the orthogonal complement of the row span of $Y$. Let $z_i = Y^\perp Z_i \in \mathbb{R}^2$, where $Z_i \in \mathbb{R}^4$ is the $i$-th row of $Z$, and let $M = (z_1~z_2~\cdots~z_n) \in \mathbb{R}^{2 \times n}$. There is a nonzero constant $c \in \mathbb{R}$ such that for all $1 \leq i < j \leq n$ the $(i,j)$ minor of $M$ is \[ \det(M_{ij}) = \det(z_i~z_j) = c \cdot \langle ABij \rangle.\]
\end{lemma}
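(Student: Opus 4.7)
The approach is to realize both sides as determinants of related $4\times 4$ matrices and compare them via a block-triangular computation. The constant $c$ will only depend on $Y$ and $Y^\perp$, not on the indices $i,j$, because everything indexing $i,j$ appears only through the columns $Z_i, Z_j$.

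Concretely, I would view $A, B \in \mathbb{R}^4$ as column vectors, so that $Y = \bigl(\begin{smallmatrix} A^T \\ B^T\end{smallmatrix}\bigr)$ and $\langle ABij\rangle = \det(A\ B\ Z_i\ Z_j)$ in the notation of \eqref{eq:ABij}. Choose vectors $C, D \in \mathbb{R}^4$ such that $Y^\perp = \bigl(\begin{smallmatrix} C^T \\ D^T\end{smallmatrix}\bigr)$; by definition, $\langle A,C\rangle=\langle A,D\rangle=\langle B,C\rangle=\langle B,D\rangle=0$. Form the $4\times 4$ matrix $N = \bigl(\begin{smallmatrix} Y \\ Y^\perp \end{smallmatrix}\bigr)$; it is invertible because its row span is all of $\mathbb{R}^4$ (the row spans of $Y$ and $Y^\perp$ are complementary). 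Also note that $z_k = Y^\perp Z_k$ is precisely the last two entries of $N Z_k$.

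Next I would multiply the $4\times 4$ matrix $(A\ B\ Z_i\ Z_j)$ on the left by $N$. The orthogonality relations force the lower-left $2\times 2$ block to vanish, so the product is block lower-triangular:
\[
N\cdot (A\ B\ Z_i\ Z_j) \;=\; \begin{pmatrix} YY^T & Y(Z_i\ Z_j) \\ 0 & Y^\perp(Z_i\ Z_j)\end{pmatrix}.
\]
Taking determinants, $\det N \cdot \langle ABij\rangle = \det(YY^T)\cdot \det(z_i\ z_j)$. Since $A, B$ are linearly independent (they span a line), the Gram determinant $\det(YY^T)$ is nonzero, and $\det N$ is nonzero by invertibility of $N$. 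Setting $c = \det N/\det(YY^T)$ gives a nonzero real constant, depending only on $Y$ and the choice of $Y^\perp$, such that $\det(z_i\ z_j) = c\cdot \langle ABij\rangle$ for all $i<j$.

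There is no real obstacle here; the only care needed is to fix conventions (column vs.\ row for $A,B,Z_i$) consistently with \eqref{eq:ABij} and to record why $\det(YY^T)\neq 0$ and $\det N\neq 0$, which justify dividing through and ensure $c\neq 0$. An alternative route via Cauchy--Binet applied to $\det(Y^\perp(Z_i\ Z_j))$, combined with the identification $p^\perp_I = \pm p_{I^c}$ between the Plücker coordinates of $Y^\perp$ and $Y$, would give the same conclusion, but the block matrix computation is more compact and makes the independence of $i,j$ transparent.
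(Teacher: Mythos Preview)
Your argument is correct and is essentially the paper's own proof: both left-multiply $(A\ B\ Z_i\ Z_j)=(Y^T\ Z_i\ Z_j)$ by a $4\times4$ matrix stacking a full-rank $2\times4$ block with row span equal to that of $Y$ on top of $Y^\perp$, producing a block-triangular matrix with lower-right block $(z_i\ z_j)$; the paper chooses the pseudo-inverse $Y^\dag$ of $Y^T$ so that the upper-left block is $I_2$, while you take $Y$ itself and get $YY^T$, but this only affects the explicit form of the nonzero constant $c$. One cosmetic slip: your displayed product has the zero block in the lower left, so it is block \emph{upper}-triangular, not lower-triangular, but your determinant identity is unaffected.
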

\begin{proof}
Let $Y^\dag \in \mathbb{R}^{2 \times 4}$ be the pseudo-inverse of $Y^\top$. The lemma follows from the identity
\[ \begin{pmatrix}
    Y^\dag \\ 
    Y^\perp
\end{pmatrix} \cdot \begin{pmatrix}
    Y^\top ~ Z_i ~  Z_j
\end{pmatrix} \, =  \, \begin{pmatrix}
    {\rm id}_{2 \times 2} & Y^\dag(Z_i~Z_j) \\ 
    0 & M_{ij}
\end{pmatrix}. \qedhere \]
\end{proof}

This lemma helps to translate the inequality description of the amplituhedron in Definition \ref{def:amplituhedron2} into conditions on the points $z_1, \ldots, z_n$ in the plane. We may fix signs so that $\langle AB12 \rangle > 0$ and $c > 0$. Then condition 1 in Definition \ref{def:amplituhedron2}, i.e.,~$\det(z_i~z_{i+1})>0$, translates to ``the angle $\arg(z_{i+1}) - \arg(z_i)$ between $z_i$ and $z_{i+1}$ lies in the open interval $(0,\pi)$ for $i = 1, \ldots, n-1$'' and similarly,  for $i=n$, we get $\arg(z_n) - \arg(z_1) \in (0,\pi)$. Here $\arg(z_i)$ is the angular coordinate of $z_i$. This means that the (piecewise linear) path connecting $z_1, z_2, \ldots, z_n$ revolves counter-clockwise around $(0,0)$ in $\mathbb{R}^2$. The two sign flips in condition 2 in Definition \ref{def:amplituhedron2} mean that this path makes a total angle $\phi$ satisfying $2 \pi<\phi <3 \pi$.

\begin{proof}[Proof of Proposition \ref{prop:algboundary}]
The nonnegative Grassmannian ${\rm Gr}(k,m)_{\geq 0}$ is homeomorphic to a $k(m-k)$-dimensional closed ball \cite{galashin2022totally}, hence it is a regular semi-algebraic set. The amplituhedron ${\cal A}_n\subset {\rm Gr}(2,4)$ is the image of ${\rm Gr}(k,m)_{\geq 0}$, so it is also regular. This implies that its algebraic boundary is a Weyl divisor in ${\rm Gr}_{\mathbb{C}}(2,4)$ by \cite[Lemma 3.2(a)]{so2}. 
From the inequality description of the amplituhedron in Definition \ref{def:amplituhedron2}, it follows that the hyperplane sections $\langle ABi(i+1) \rangle = 0, \, i \in [n]$, together with $\langle AB1j \rangle =0$ for $j\in [n]$ are the only candidates for the irreducible components of ${\cal A}_n$.  Indeed, they are the only functions appearing in the inequality description, so at least one of them has to be zero in a boundary point. 
Since ${\cal A}_n$ is a regular semi-algebraic set, such a hyperplane section $\langle AB ij \rangle =0$ is an irreducible component of $\partial_a {\cal A}_n$ if and only if there is a boundary point $AB$ of ${\cal A}_n$ for which we have $\langle AB ij \rangle =0$ and no other candidate divisor contains $AB$. We find such a boundary point for every hyperplane section listed in the proposition and show that no such point exists for $\langle AB1j \rangle $, $j\in \{3, \ldots, n-1\}$. 

It is convenient to define an operator $\sigma: \mathbb{R}^{2 \times n} \rightarrow \mathbb{R}^{2 \times n}$ which modifies a matrix $X$ by applying a circular shift to its columns, and then inverts the sign of the first column: 
    \[ \sigma \begin{pmatrix}
        x_1 & x_2 & \cdots & x_n \\
        y_1 & y_2 & \cdots & y_n
    \end{pmatrix}  \, = \, \begin{pmatrix}
        -x_n & x_1 & \cdots & x_{n-1}\\
        -y_n & y_1 & \cdots & y_{n-1}
    \end{pmatrix} .\]
    We write $\sigma^k = \sigma \circ \cdots \circ \sigma$ for the operator which applies $\sigma$ $k$ times, and $\sigma^0$ is the identity. 

    To identify a point on ${\cal A}_n \cap \langle AB 12 \rangle$ on which no other candidate bracket vanishes, we pick a general element $X$ in the following $3$-dimensional family of nonnegative matrices: 

    \[ \left \{ \begin{pmatrix}  1& x_1 & 0 & 0 & 0 & \cdots & 0 \\ 
    0 & y_1 & y_2 & 1  & 0 & \cdots & 0 \end{pmatrix}
     \, : \, x_1, y_1, y_2 \in \mathbb{R}_{>0} \right \} 
    \]
    We then send $X$ through the amplituhedron map: $X \cdot Z = AB$. Geometrically, $AB$ is an element of the $3$-dimensional set of lines passing through the line segment ${\rm conv}(Z_1,Z_2)$ and through the triangle ${\rm conv}(Z_2,Z_3,Z_4)$. Any element in that set satisfies $\langle AB12 \rangle = 0$, and it is straightforward to verify that for a general element $\langle AB ij \rangle \neq 0$ for other indices $i,j$. To find a line $AB$ satisfying $\langle ABi(i+1) \rangle$ and all other $\langle ABij \rangle$ nonzero, simply consider~$\sigma^{i-1}(X)$. 

    We have now shown that $\langle AB i (i+1) \rangle$ belongs to the algebraic boundary of the amplituhedron for $i \in [n]$ and switch to showing that $\langle AB1j \rangle = 0$, $j\in \{3, \ldots, n-1\}$ does not. 
    Fix $j \in \{3, \ldots, n-1\}$ and suppose that $AB \in {\cal A}_n$ is a point in the amplituhedron for which $\langle AB1j \rangle = 0$, all of the brackets $\langle AB i(i+1) \rangle$ for $i \in [n]$ are strictly positive and $\langle AB1j'\rangle \neq 0$ for $j' \neq j$. In the notation of Lemma \ref{lem:projaway}, this means in particular that $\arg(z_{i+1})-\arg(z_i) \in (0,\pi)$. Our strategy is to show that $\langle AB1(j-1) \rangle$ and $\langle AB1(j+1) \rangle$ have opposite sign, so that the sign flip condition 2 in Definition \ref{def:amplituhedron2} is satisfied for either sign of $\langle AB1j \rangle$, which means that $AB$ is an interior point. Assume first that $\langle AB1(j-1)\rangle  > 0$. For the projections, this entails $\arg(z_{j-1}) - \arg (z_1) \in (0,\pi)$.  Since $\langle AB 1j \rangle=0$, the line $z_1z_j$ passes through the origin in $\mathbb{R}^2$, hence $\arg(z_j) - \arg(z_1) \in \{0, \pi\}$. If this difference is $0$, then 
   \[ \arg(z_j) - \arg(z_{j-1}) = \arg(z_j) - \arg(z_1) + \arg(z_1) - \arg(z_{j-1}) = - (\arg(z_{j-1}) - \arg(z_1)) \in (-\pi,0),\]
   which contradicts our assumption that all $\langle ABi(i+1) \rangle$ are positive. Therefore, we have $\arg(z_j) - \arg(z_1) = \pi$, which implies 
   \[ \arg(z_{j+1}) - \arg(z_1) = \arg(z_{j+1}) - \arg(z_j) + \arg(z_j) - \arg(z_1) \in (-\pi,0).\] 
   So we have $\langle AB1(j+1) \rangle <0 $ showing that $AB$ is an interior point of $\cA_n$. Symmetrically, if $\langle AB1(j-1) \rangle < 0$, then we must have $\arg(z_j) - \arg(z_1) = 0$, which implies $\arg(z_{j+1}) - \arg(z_1) \in (0,\pi)$, so that $\langle AB1(j+1) \rangle > 0$. Again, the line $AB$ is an interior point of $\cA_n$. 
\end{proof}

Next, we describe the combinatorial structure of $\partial_a {\cal A}_n$. That is, we describe the irreducible components of all intersections between boundary components, and their inclusion relations. Our analysis needs the following generality assumption. 
\begin{assumption} \label{assum:general}
    There is no line $AB$ for which more than $4$ of $\langle ABi(i+1)\rangle,i \in [n] $ vanish.
\end{assumption}

\begin{remark}
    Assumption \ref{assum:general} holds for generic positive matrices $Z$, even though the lines $Z_iZ_{i+1}$ are not in generic position: they form a cycle in $\mathbb{P}^3$. There are precisely two lines meeting any four of these lines. These appear in Table \ref{tab:complexstrata} and Figure \ref{fig:boundarystrata} below. Generically these lines meet none of the other lines in the cycle.
\end{remark}

The components $\{ AB \in {\rm Gr}_\C(2,4) ~|~ \langle AB i(i+1) \rangle = 0 \}$ are tangent hyperplane sections of ${\rm Gr}_\C(2,4)$. Each one is defined by the following Schubert condition: 
$$L_{i(i+1)} : AB\cap Z_iZ_{i+1}\not=\emptyset.$$ 
The irreducible components of their intersections are conveniently described by products of Schubert conditions, which involve incidence conditions with lines, points and planes: 
\[L_{ij}: AB\cap Z_iZ_j\not=\emptyset, \quad  \, \, \, V_i: Z_i\in AB, \quad  \, \,\,  P_{(i-1)i(i+1)}: AB\subset Z_{i-1}Z_iZ_{i+1}. \]
This reads, from left to right, as ``lines intersecting the line $Z_iZ_j$'' (here $i < j$), ``lines containing the point $Z_i$'' and ``lines contained in the plane $Z_{i-1}Z_iZ_{i+1}$''. 
While the first condition $L_{i,j}$ defines a $3$-dimensional intersection of ${\rm Gr}_{\mathbb{C}}(2,4)$ with a tangent hyperplane, the latter two describe planes in ${\rm Gr}_\C(2,4)$. It is convenient to interpret the indices cyclically: $Z_{n+j} = Z_j$. Since we will often intersect different Schubert conditions, we omit the intersection symbol $\cap$ in the notation for brevity. For instance, $V_nP_{n(n+1)(n+2)}$ is the one-dimensional locus of lines through $Z_n$ \emph{and} contained in the plane $Z_1Z_2Z_n$. This explains the notation in Table \ref{tab:complexstrata}.

We stratify the algebraic boundary of ${\cal A}_n$ by intersections of its irreducible components. Here is a list of the irreducible components obtained by this process.
\begin{theorem} \label{thm:stratanew}
 Under Assumption \ref{assum:general}, the algebraic boundary $\partial_a {\cal A}_n$ of the amplituhedron is stratified into closed irreducible strata coming in 14 types. The strata are listed in Table \ref{tab:complexstrata}. The number of types in dimension $3$, $2$, $1$, $0$ is $1$, $3$, $4$, $6$ respectively. The number of strata of each type~is
\[ \begin{array}{c||c||c|c|c||c|c|c|c}
    \text{type} & (3,{\rm I}) & (2,{\rm I}) &
      (2, {\rm II}) &(2, {\rm III}) & (1,{\rm I})& (1,{\rm II}) & (1,{\rm III})& (1,{\rm IV}) \\  \hline
      \text{\# strata} &
      n & n &  n & \binom{n}{2}-n  & n& n(n-2)&n(n-4)&\binom{n}{3}-n(n-3)\\
\end{array} 
\]
\[
\begin{array}{c||c|c|c|c|c|c}
    {\rm type} & (0,{\rm I}) & (0,{\rm II}) &
      (0, {\rm III}) & (0,{\rm IV})& (0,{\rm V})&
      (0, {\rm VI}) \\  \hline
      \text{\# strata} &
     \binom{n}{2} & \frac{n}{2}(n-5)&n(n-4)&n\binom{n-3}{2}&n\binom{n-5}{2}& \frac{n}{24} (n-5)(n-6)(n-7) 
\end{array}.
\]
\end{theorem}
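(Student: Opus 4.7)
The plan is to proceed by decreasing dimension, iteratively intersecting with the boundary divisors $L_{i(i+1)}$ from Proposition~\ref{prop:algboundary} and using the classical Schubert calculus on ${\rm Gr}_{\mathbb{C}}(2,4)$ to decompose each intersection into its irreducible components. The basic tool is the following dichotomy: for two lines $\ell, \ell' \subset \mathbb{P}^3$, the intersection of Schubert divisors $L_\ell \cap L_{\ell'}$ in ${\rm Gr}_{\mathbb{C}}(2,4)$ splits reducibly as $V_p \cup P_H$ whenever $\ell$ and $\ell'$ meet at a point $p$ and span a plane $H$, whereas when $\ell$ and $\ell'$ are skew the intersection is an irreducible ruled quadric of common transversals.

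Applied to pairs of brackets, this dichotomy immediately produces the codimension-$2$ strata: the $n$ adjacent bracket pairs (those sharing an index on the cycle) yield $n$ strata $V_i$ of type $(2,{\rm I})$ and $n$ strata $P_{(i-1)i(i+1)}$ of type $(2,{\rm II})$, while the remaining $\binom{n}{2}-n$ non-adjacent pairs yield the irreducible quadric surfaces of type $(2,{\rm III})$. I would then iterate: intersect each $2$-dimensional stratum with a further divisor $L_{k(k+1)}$ and case-analyze. For $V_i$, the two brackets containing $i$ absorb the stratum, whereas any other $k$ cuts out a pencil of lines through $Z_i$ in the plane $Z_iZ_kZ_{k+1}$; this generically gives a new $1$-dimensional stratum, but specializes when $Z_iZ_kZ_{k+1}$ coincides with an adjacency plane $P_{(m-1)m(m+1)}$. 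Analogous case analysis for the $P$-strata and for the non-adjacent quadric surfaces yields the four $1$-dimensional types, each irreducible because pencils and rulings on quadrics are.

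One further round of intersections gives the six $0$-dimensional types. The simplest is $V_i \cap V_j = \{Z_iZ_j\}$, contributing $\binom{n}{2}$ strata of type $(0,{\rm I})$; further types arise as pencils meet additional divisors and collapse to a single line via a reference point, a line in a reference plane, or a common transversal of two skew chords. The most intricate type $(0,{\rm VI})$ enumerates common transversals to quadruples of pairwise cyclically non-adjacent bracket chords, and the classical formula for independent $4$-subsets of the cyclic group $\mathbb{Z}/n$, namely $\tfrac{n}{n-4}\binom{n-4}{4} = \tfrac{n}{24}(n-5)(n-6)(n-7)$, matches the table. Throughout, Assumption~\ref{assum:general} rules out every accidental incidence beyond those forced by adjacency.

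The main obstacle is ensuring completeness and non-redundancy in the $0$-dimensional classification: one must verify that the six types exhaust all irreducible $0$-dimensional components and that no stratum is double-counted. I plan to address this by attaching to each type a canonical tuple of indices on the cycle $\mathbb{Z}/n$ with explicit adjacency constraints, reading off the counts from the induced combinatorial bijections, and invoking Assumption~\ref{assum:general} to exclude unexpected coincidences such as a fifth bracket vanishing on a candidate transversal or a chord passing through a third reference point. The Schubert-calculus dichotomy keeps the bookkeeping linear in the number of intersections and respects the cyclic symmetry of the bracket labeling.
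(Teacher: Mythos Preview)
Your overall strategy coincides with the paper's: apply the Schubert dichotomy to pairs of boundary divisors, iterate by decreasing dimension, and count combinatorially under Assumption~\ref{assum:general}. There is, however, a genuine gap in the procedure as you describe it. You propose to obtain each $1$-dimensional stratum by intersecting a $2$-dimensional stratum with a \emph{further boundary divisor} $L_{k(k+1)}$. This misses type $(1,{\rm I},i) = V_i \cap P_{(i-1)i(i+1)}$, which is the intersection of the two irreducible components of the \emph{same} pairwise intersection $L_{(i-1)i}\cap L_{i(i+1)} = V_i \cup P_{(i-1)i(i+1)}$. No third boundary divisor cuts $V_i$ down to this pencil: for $k\in\{i-1,i\}$ the divisor already contains $V_i$, and every other $k$ produces a pencil of type $(1,{\rm II})$. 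Your remark about the plane $Z_iZ_kZ_{k+1}$ coinciding with an adjacency plane does not repair this: the only such coincidences (at $k=i+1$ or $k=i-2$) give $P_{i(i+1)(i+2)}$ or $P_{(i-2)(i-1)i}$, never $P_{(i-1)i(i+1)}$, and the resulting pencils are still of type $(1,{\rm II})$. The same phenomenon recurs in dimension~$0$: the vertex $(0,{\rm III},ij)$ lies only on the three divisors $L_{(i-1)i},L_{i(i+1)},L_{j(j+1)}$, and the triple intersection decomposes as $(1,{\rm II},ij)\cup(1,{\rm III},ij)$; the vertex is the intersection of these two $1$-dimensional components, not a component in its own right.

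The fix is to stratify by iterated intersections of \emph{all} strata already obtained, not only by further intersections with the ambient divisors; equivalently, whenever an intersection is reducible you must also record the pairwise intersections of its components. This is exactly what the paper does (``it defines the line of intersection of two boundary planes''). Once you incorporate this step, your Schubert case analysis and your cyclic counts, including the formula $\tfrac{n}{n-4}\binom{n-4}{4}$ for type $(0,{\rm VI})$, go through as in the paper.
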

\begin{proof}
The $3$-dimensional strata are described in Proposition \ref{prop:algboundary}. Their intersections
\begin{equation}\label{schubert relation}
 L_{(i-1)i}L_{i(i+1)}\, = \, V_i\cup P_{(i-1)i(i+1)}, \,  i = 1, \ldots, n \quad \text{and} \quad  L_{i(i+1)}L_{j(j+1)}, \,  j>i+1
\end{equation}
form the $2$-dimensional strata. The first identity in the above display is the Schubert relation expressing the fact that the intersection between ${\rm Gr}_{\mathbb{C}}(2,4)$ and two consecutive tangent hyperplanes is the union of two planes.
This explains dimension three and two in Table \ref{tab:complexstrata}.

Since $L_{ij}$ defines a hyperplane section of ${\rm Gr}_{\mathbb{C}}(2,4)$, it has codimension one; $V_i$ and  $P_{(i-1)i(i+1)}$ have codimension two.

In dimension one, we have the product relation $V_iL_{i-1,i+1}=V_iP_{i-1,i,i+1}$.
Such a product has codimension three; it defines the line of intersection of two boundary planes. This line is called $(1,{\rm I},i)$ in the table.  All other $1$-dimensional strata lie in the intersection of three $3$-dimensional boundary components. The different types are easily described as products of Schubert conditions  
listed above whose codimensions add up to three.  Finally, the $0$-dimensional strata, i.e., the vertices of the algebraic boundary, are described as products of the listed Schubert conditions whose codimensions add up to four.   

All types of strata are illustrated in Figure \ref{fig:boundarystrata} for $n = 8$. Green triangles represent plane conditions $P_{ijk}$, red ticks are line conditions $L_{ij}$, and blue squares are point conditions $V_i$.

The count of strata of each type is now combinatorial. 
For instance, for type $(1,{\rm IV})$, the count is $\binom{n}{3}-n-n(n-4)$.  It counts the number of triples of edges $Z_iZ_{i(i+1)}$, and subtracts the ones in which at least two are connected. For types $(0,{\rm II})$,$(0,{\rm V})$,$(0,{\rm VI})$, the similar counts are, respectively, $n(n-5)/2$, $n(\binom{n-4}{2}-(n-5))$ and $\binom{n}{4}-n-n(n-5)-n(n-5)/2-n\binom{n-5}{2}$.
These are simplified in the table.  The rest of the counts are simpler and left to the reader. 

We conclude with a comment on Assumption \ref{assum:general}. A line $AB$ for which more than four of $\langle ABi(i+1)\rangle,i \in [n] $ vanish is contained in more than four of the boundary hyperplane sections of ${\rm Gr}_{\mathbb{C}}(2,4)$.  We may assume that it is a vertex on the the algebraic boundary.  Its description in terms of the above Schubert conditions would then not be unique, and the counting argument for the number of different strata would need to be modified.
\end{proof}

\begin{table}[h!]
\centering 
\footnotesize
\begin{tabular}{l|l|l|c|l}
name & Schubert conditions & indices & deg & b/r \\ 
\hline
$(3,{\rm I},i)$  & $L_{i(i+1)}$  & $i = 1, \ldots n$      &  1       &    b     \\
\hline
$(2,{\rm I}, i)$ & $V_i$ & $i = 1, \ldots, n$ & 1 &b \\
$(2,{\rm II}, i)$ & $P_{(i-1)i(i+1)}$ & $i = 1, \ldots, n$ & 1 &b \\
$(2,{\rm III}, ij)$ & $L_{i(i+1)}L_{j(j+1)}$ & $\{i,i+1\} \cap \{j, j+1\} = \emptyset$ & 2 &b \\
\hline
$(1, {\rm I},i)$ & $V_iL_{(i-1)(i+1)}$ & $i = 1, \ldots, n$ & 1 &b \\
$(1,{\rm II}, ij)$ & $V_i L_{j(j+1)}$ & $i \notin \{j,j+1\} $ & 1 &b \\
$(1,{\rm III}, ij)$ & $P_{(i-1)i(i+1)}L_{j(j+1)}$ & $\{i-1,i,i+1\} \cap \{j,j+1\} = \emptyset$ & 1 &r \\
$(1,{\rm IV},ijk)$ & $L_{i(i+1)} L_{j(j+1)} L_{k(k+1)}$ & $\{s,s+1\} \cap \{t,t+1\}  = \emptyset, \{s,t\} \in \binom{\{i,j,k\}}{2}$ & 2 &r \\ \hline
$(0,{\rm I},ij)$ & $V_iV_j$ & $\{i,j\} \in \binom{[n]}{2}$ & 1  &b \\
$(0,{\rm II},ij)$ & $P_{(i-1)i(i+1)}P_{(j-1)j(j+1)}$ & $\{i-1,i,i+1\} \cap \{j-1,j,j+1\} = \emptyset$ & 1 & r\\
$(0,{\rm III},ij)$ & $V_iL_{(i-1)(i+1)}L_{j(j+1)}$ & $\{i-1,i,i+1\} \cap \{j,j+1\} = \emptyset$ & 1 & r \\
$(0,{\rm IV},ijk)$ & $V_iL_{j(j+1)}L_{k(k+1)}$ & $\{j,j+1\} \cap \{k,k+1\} = \emptyset, i \notin\{j,j+1,k,k+1\}$ & 1 &r \\
$(0,{\rm V},ijk)$ & $P_{(i-1)i(i+1)}L_{j(j+1)}L_{k(k+1)}$ & 
$ {|s-i|>1, |s-i+1|>1, s \in \{j,k\},|j-k|>1} $ & 1 &r \\
$(0,{\rm VI},ijkl)$ & $L_{i(i+1)}L_{j(j+1)}L_{k(k+1)}L_{l(l+1)}$ & $\{s, s+ 1\} \cap \{t,t+1\} = \emptyset, \{s,t\} \in \binom{\{i,j,k,l\}}{2}$ & 2 &r
\end{tabular}
\caption{Closed strata of $\partial_a{\cal A}_n$}
\label{tab:complexstrata}
\end{table}

\begin{figure}
    \centering
    \includegraphics[width = \textwidth]{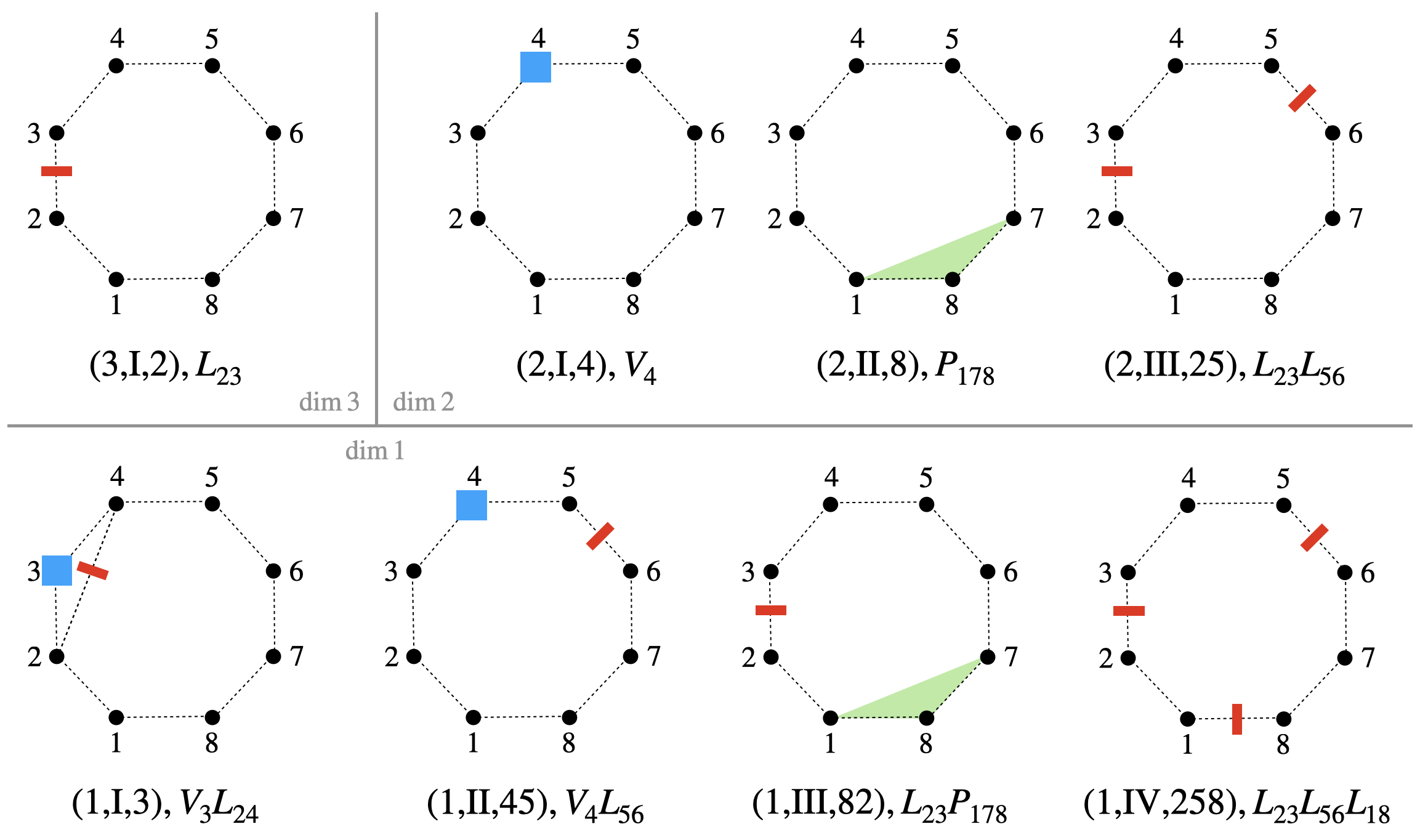}
    \includegraphics[width = \textwidth]{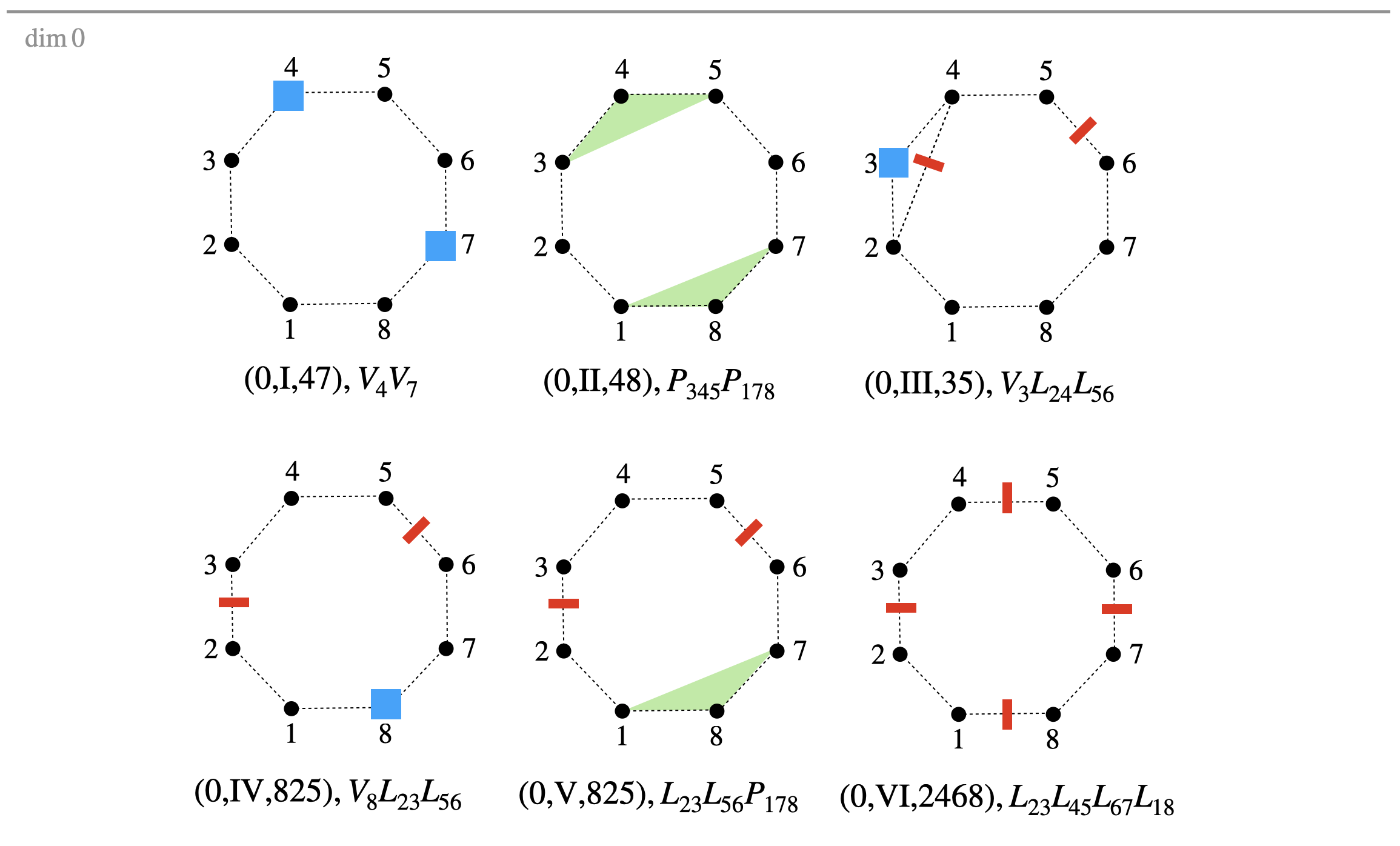}
    \caption{Closed strata of $\partial_a{\cal A}_8$}
    \label{fig:boundarystrata}
\end{figure}
 
We end the section with a discussion on the inclusion relations between closed strata. These are conveniently deduced from their description in terms of Schubert conditions. In many cases, an inclusion of closed strata is seen as the reverse inclusion of Schubert conditions. In particular, the relation $L_{(i-1)i}L_{i(i+1)} = V_i\cup P_{(i-1)i(i+1)}$ implies that the conditions $V_i$ and $P_{i-1,i,i+1}$ both imply $L_{(i-1)i}$ and $L_{i(i+1)}$. The inclusions between different types are coarsely depicted in Figure \ref{fig:rough stratification}. More precisely, an edge between two types P, Q of dimension $d, d+1$ respectively means that there is a closed stratum of type P contained in a closed stratum of type Q. 
\begin{figure}
\begin{equation}
\begin{tikzcd}
(3,{\rm I})  \arrow[d,,dash] \arrow[dr,dash] \arrow[drr,dash]& &&&&\\
(2,{\rm I}) \arrow[d,dash]\arrow[dr,dash] & (2,{\rm II})\arrow[d,dash] \arrow[dl,dash]\arrow[dr,dash]&(2,{\rm III}) \arrow[dl,dash] \arrow[d,dash]\arrow[dr,dash]&&&\\
(1,{\rm I}) \arrow[d,dash]\arrow[drr,dash] & (1,{\rm II}) \arrow[dl,dash]\arrow[dr,dash]\arrow[drr,dash]&(1,{\rm III})\arrow[dl,dash]\arrow[d,dash]\arrow[drr,dash]&(1,{\rm IV})\arrow[d,dash]\arrow[dr,dash]\arrow[drr,dash]&&\\
(0,{\rm I}) & (0,{\rm II}) &(0,{\rm III})&(0,{\rm IV}) & (0,{\rm V}) &(0,{\rm VI})
\end{tikzcd}
\end{equation}
\caption{Inclusion relations between types}
\label{fig:rough stratification}
\end{figure}
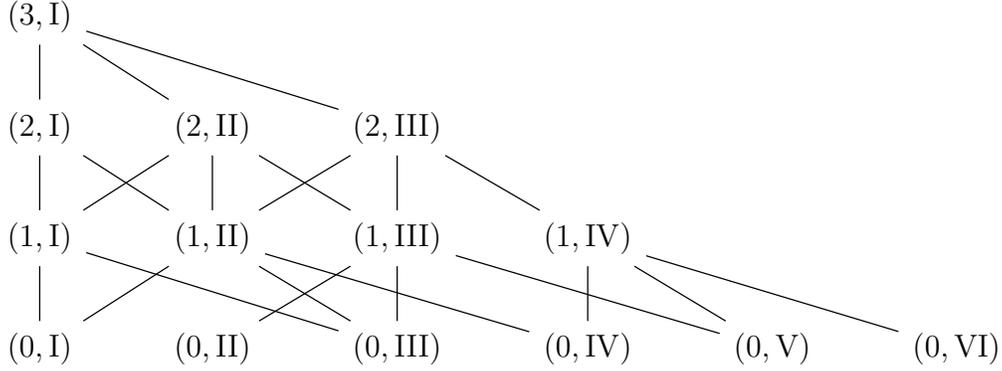
Table \ref{tab:vertices in curves} lists the inclusions of vertices in $\partial_a {\cal A}_n$  into $1$-dimensional strata.
\begin{table}[h!]
 \[
\begin{array}{ccc}
(0,{\rm I},i(i+1)) &\in & (1,{\rm I},i)\cap (1,{\rm I},i+1)\cap (1,{\rm II},i(i+1))\cap (1,{\rm II},(i+1)(i-1)) \\
(0,{\rm I},ij) &\in & (1,{\rm II},ij)\cap   (1,{\rm II},i(j-1))\cap (1,{\rm II},ji)\cap (1,{\rm II},j(i-1)) \\
(0,{\rm II},ij) &\in & (1,{\rm III},ij)\cap (1,{\rm III},i(j-1))\cap (1,{\rm III},ji)\cap (1,{\rm III},j(i-1))\\ 
(0,{\rm III},ij) &\in & (1,{\rm I},i)\cap (1,{\rm II},ij)\cap (1,{\rm III},ij)\\ 
(0,{\rm IV},ijk) &\in & (1,{\rm II},ij)\cap (1,{\rm II},ik)\cap (1,{\rm IV},(i-1)jk)\cap (1,{\rm IV},ijk)\\ 
(0,{\rm V},ijk) &\in & (1,{\rm III},ij)\cap (1,{\rm III},ik)\cap (1,{\rm IV},(i-1)jk)\cap (1,{\rm IV},ijk)\\
(0,{\rm VI},ijkl) &\in & (1,{\rm IV},ijk)\cap (1,{\rm IV},ijl)\cap (1,{\rm IV},ikl)\cap (1,{\rm IV},jkl)\\
\end{array}
\]
\caption{Incidence of $0$-dimensional strata of $\partial_a {\cal A}_n$ in $1$-dimensional strata}
\label{tab:vertices in curves}
\end{table}

\section{Face structure and residual arrangement} \label{sec:4}

The $n$ boundary facets of the amplituhedron are open subsets (in the Euclidean topology) of the $3$-dimensional strata. These strata are quadric boundary threefolds in the algebraic boundary. To describe boundary faces of lower dimension, we first investigate which strata in the algebraic boundary do not intersect the amplituhedron. The strata in $\partial_a {\cal A}_n$ are quadric threefolds, quadric surfaces, planes, conics, lines or just vertices (points). We will sometimes refer to them as such, instead of using the more general term ``strata''.

\begin{theorem} \label{thm:residualstrata}
 Under Assumption~\ref{assum:general}, the closed strata of $\partial_a {\cal A}_n(Z)$ which do not intersect ${\cal A}_n(Z)$ are  those contained in the strata of type $(1,{\rm III})$ or $(1,{\rm IV})$. These are all strata of type 
 \[ (1,{\rm III}), \,(1, {\rm IV}), \,(0, {\rm II}), \, (0, {\rm III}), (0, {\rm IV}), \, (0, {\rm V}), \, (0, {\rm VI}). \]
\end{theorem}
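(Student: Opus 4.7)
My plan is to prove the theorem in three steps.

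\textbf{Step 1 (classification of sub-strata).} I first verify the final assertion of the theorem: the closed strata contained in some stratum of type $(1,{\rm III})$ or $(1,{\rm IV})$ are exactly those of the listed types. This is a routine inspection of the products of Schubert conditions. Using Table~\ref{tab:vertices in curves}, one sees that vertices of type $(0,{\rm II}),(0,{\rm III}),(0,{\rm IV}),(0,{\rm V}),(0,{\rm VI})$ each appear in some $(1,{\rm III})$ or $(1,{\rm IV})$ curve, while vertices of type $(0,{\rm I})$ appear only in $(1,{\rm I})$ and $(1,{\rm II})$ curves. The higher-dimensional ``b''-types cannot lie in a $1$-dimensional stratum for dimension reasons.

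\textbf{Step 2 (the residual strata miss $\mathcal{A}_n$).} This is the heart of the argument. I fix a line $AB$ lying in a closed $(1,{\rm III})$ or $(1,{\rm IV})$ stratum and use Lemma~\ref{lem:projaway} to project away from $AB$, producing planar points $z_1,\dots,z_n\in\mathbb{R}^2$ with $\det(z_i~z_j)$ proportional to $\langle ABij\rangle$. By the angular description recalled after that lemma, membership $AB\in\mathcal{A}_n$ forces (after sign normalization) $\arg(z_{k+1})-\arg(z_k)\in[0,\pi]$ cyclically in $k\in[n]$, and total turning of the polygonal path $z_1z_2\cdots z_nz_1$ around the origin in $[2\pi,3\pi]$. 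For a point in $(1,{\rm III},ij)=P_{(i-1)i(i+1)}L_{j(j+1)}$, the plane condition forces $z_{i-1},z_i,z_{i+1}$ collinear with the origin, and the line condition forces $z_j,z_{j+1}$ collinear with the origin. A case analysis on the two relative positions of each collinear tuple (same ray vs.~opposite rays), in the spirit of the last part of the proof of Proposition~\ref{prop:algboundary}, shows that in every case some $\arg(z_{k+1})-\arg(z_k)$ falls outside $[0,\pi]$ or the total turning drops below $2\pi$. The argument for $(1,{\rm IV},ijk)=L_{i(i+1)}L_{j(j+1)}L_{k(k+1)}$ is analogous: three collinear-with-origin pairs on disjoint cyclic edges cannot be combined with the angular monotonicity to produce total rotation at least $2\pi$. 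Combined with Step 1, the listed $0$-dimensional strata inherit the conclusion because they sit inside such $1$-dimensional ones.

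\textbf{Step 3 (the remaining strata meet $\mathcal{A}_n$).} For each type marked ``b'' in Table~\ref{tab:complexstrata} I exhibit an explicit nonnegative matrix $X\in\mathbb{R}^{2\times n}$ such that $X\cdot Z$ realizes a point in the stratum that lies in $\mathcal{A}_n$. Cyclic symmetry via the shift operator $\sigma$ from the proof of Proposition~\ref{prop:algboundary} reduces the work to one representative per type. Matrices with block-zero patterns routing the line through a designated $Z_i$ realize $(2,{\rm I},i)=V_i$, and patterns placing the line into the plane $Z_{i-1}Z_iZ_{i+1}$ realize $(2,{\rm II},i)=P_{(i-1)i(i+1)}$; the remaining types $(2,{\rm III}),(1,{\rm I}),(1,{\rm II}),(0,{\rm I})$ arise as generic members of appropriate intersections of these families.

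The main obstacle is Step 2: the difficulty is organizing the case analysis cleanly, because each collinearity condition splits into two angular subcases that interact with the $n$ consecutive-angle inequalities and the global turning number. A convenient organization parameterizes $AB$ inside the ambient $1$-dimensional stratum (e.g., as a variable line in the plane $Z_{i-1}Z_iZ_{i+1}$ passing through the point $Z_jZ_{j+1}\cap Z_{i-1}Z_iZ_{i+1}$) and tracks how the angular pattern $(\arg(z_1),\dots,\arg(z_n))$ is rigidified by the Schubert conditions, making every subcase explicit.
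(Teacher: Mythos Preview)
Your Steps 1 and 3 are exactly what the paper does (Table~\ref{tab:vertices in curves} for the vertex inclusions; explicit positroid cells in Lemmas~\ref{lem:codim2} and~\ref{lem:dotouch} for the ``b''-strata). Step 2 is where you diverge from the paper, and it has a genuine gap.

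The paper does not use the angular picture here at all. Instead it proves a rank lemma via Cauchy--Binet (Lemma~\ref{lem:useful}): if $AB=X\cdot Z\in\mathcal{A}_n$ with $X\in{\rm Gr}(2,n)_{\geq 0}$ and $\langle AB\,i(i+1)\rangle=0$, then the submatrix $X_{[n]\setminus\{i,i+1\}}$ has rank one. For $(1,\mathrm{IV})$ this yields three rank-one submatrices whose column sets cover $[n]$ with every column appearing in at least two of them, forcing $\mathrm{rank}(X)=1$---a one-line contradiction (Lemma~\ref{lem:3IIdoesnottouch}). For $(1,\mathrm{III})$ the same lemma forces every column of $X$ except column $i$ to be proportional, so $AB$ must pass through $Z_i$; the unique remaining point $(1,\mathrm{II},ij)\cap(1,\mathrm{III},ij)$ is then excluded by an explicit bracket computation (Lemma~\ref{lem:3IIIbdoesnottouch}). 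This works directly from Definition~\ref{def:amplituh} and bypasses the sign-flip description.

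Your angular plan, by contrast, is only asserted, and the heuristic you state for $(1,\mathrm{IV})$---that three collinear-with-origin edge pairs force total turning below $2\pi$---is false as written: for $n=6$ and increments $(\pi,\tfrac{\pi}{2},\pi,\tfrac{\pi}{2},0)$ along $z_1\to\cdots\to z_6$ one gets total turning $3\pi$. What actually fails in such configurations is the two-sign-flip condition, not the turning bound, so the case analysis you would need is more delicate than advertised. There is a second problem: the description via $\arg(z_k)$ is undefined when some $z_k=0$, i.e.\ when $AB$ passes through some $Z_k$. On a $(1,\mathrm{III},ij)$-line this happens precisely at its intersection with $(1,\mathrm{II},ij)$, a point you must still exclude and which the paper handles by a separate explicit computation. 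Your outline does not address these degenerate points, so even granting the generic angular analysis the argument would remain incomplete.
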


Throughout the section, we set ${\cal A}_n = {\cal A}_n(Z)$ for a fixed totally positive matrix $Z$, satisfying Assumption \ref{assum:general}.
We break up the proof of Theorem \ref{thm:residualstrata} into a series of lemmas.

\begin{lemma} \label{lem:codim2}
    All $2$-dimensional strata of $\partial_a {\cal A}_n$ intersect the amplituhedron ${\cal A}_n$ in a semi-algebraic set of dimension 2. 
\end{lemma}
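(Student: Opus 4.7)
The plan is to exhibit, for each of the three types of closed 2-dimensional strata $S$ listed in Table~\ref{tab:complexstrata}, an explicit 2-parameter family of nonnegative matrices $X \in {\rm Gr}(2,n)_{\geq 0}$ whose image $Z \cdot X$ lies in $S$ and has 2-dimensional image in ${\rm Gr}(2,4)$. Since $Z({\rm Gr}(2,n)_{\geq 0}) = {\cal A}_n$ by Definition~\ref{def:amplituh}, such a family certifies that $S \cap {\cal A}_n$ is a semi-algebraic set of dimension 2.

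For type $(2,{\rm III},ij) = L_{i(i+1)} L_{j(j+1)}$ with $\{i,i+1\}\cap\{j,j+1\}=\emptyset$, I would take $X$ with first row $a\, e_i + b\, e_{i+1}$ and second row $c\, e_j + d\, e_{j+1}$, for $a,b,c,d \geq 0$. A direct computation of the $2\times 2$ minors of $X$ shows that the only potentially nonzero Pl\"ucker coordinates are $p_{ij}(X)=ac$, $p_{i(j+1)}(X)=ad$, $p_{(i+1)j}(X)=bc$, and $p_{(i+1)(j+1)}(X)=bd$, all nonnegative, so $X \in {\rm Gr}(2,n)_{\geq 0}$. The line $Z \cdot X$ passes through a point of the segment $[Z_i,Z_{i+1}]$ and a point of $[Z_j,Z_{j+1}]$, so it meets both $Z_iZ_{i+1}$ and $Z_jZ_{j+1}$ and lies in $S$; varying $(a:b)$ and $(c:d)$ independently in $\mathbb{RP}^1$ yields a 2-parameter family of distinct lines. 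The case $(2,{\rm II},i) = P_{(i-1)i(i+1)}$ is handled identically using supports $\{i-1,i\}$ and $\{i,i+1\}$, producing a 2-parameter family of lines inside the plane $Z_{i-1}Z_iZ_{i+1}$.

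For the remaining type $(2,{\rm I},i) = V_i$, I would take $X$ with first row $e_i$ and second row $v$ satisfying $v_i=0$, $v_j \leq 0$ for $j<i$, and $v_j \geq 0$ for $j>i$. The only nonzero Pl\"ucker coordinates of $X$ are $p_{il}(X) = v_l$ for $l>i$ and $p_{ji}(X) = -v_j$ for $j<i$, all nonnegative, so $X \in {\rm Gr}(2,n)_{\geq 0}$. The image line passes through $Z_i$ and through $\sum_{j \neq i} v_j Z_j$. The main step requiring care is verifying that this family has 2-dimensional image in the plane $V_i \cong \mathbb{P}^2$ of lines through $Z_i$. Total positivity of $Z$ implies that any four rows of $Z$ are linearly independent; consequently the rows $\{Z_j : j \neq i\}$ span all of $\R^4$ for $n \geq 5$, or a 3-dimensional subspace transverse to $\langle Z_i\rangle$ for $n=4$. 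In either case, the image of the $(n-1)$-dimensional cone of admissible $v$ under $v\mapsto\sum_j v_j Z_j$ is 3-dimensional in $\R^4/\langle Z_i\rangle$, and its projectivization gives the desired 2-dimensional subset of $V_i$. This $V_i$ case is the only one where I anticipate the verification to require more than the routine Pl\"ucker minor computation.
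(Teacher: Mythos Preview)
Your approach is essentially the same as the paper's: for each type of 2-dimensional stratum, exhibit an explicit nonnegative cell in ${\rm Gr}(2,n)_{\geq 0}$ whose image under $Z$ lands in that stratum and is 2-dimensional. Your construction for type $(2,{\rm I})$ is fine (and in fact slightly more generous than the paper's, which uses a specific 2-parameter cell rather than the full $(n-1)$-dimensional orthant you describe).

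There is one genuine gap, however. For types $(2,{\rm II})$ and $(2,{\rm III})$ you do not treat the cyclic boundary cases, and your matrices fail to be totally nonnegative there. Concretely, for $(2,{\rm III},ij)$ with $j=n$ (so that $j+1$ wraps to $1$), taking first row $a\,e_i+b\,e_{i+1}$ and second row $c\,e_n+d\,e_1$ gives
\[
p_{1i}=-ad,\quad p_{1(i+1)}=-bd,\quad p_{in}=ac,\quad p_{(i+1)n}=bc,
\]
which have mixed signs. The same failure occurs for $(2,{\rm II},i)$ when $i\in\{1,n\}$: e.g.\ for $i=n$, first row $a\,e_{n-1}+b\,e_n$ and second row $c\,e_n+d\,e_1$ yields $p_{(n-1)n}=ac$ but $p_{1(n-1)}=-ad$. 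The paper fixes this either by inserting a single minus sign in the wrapped column (for $(2,{\rm III})$ with $j=n$ it uses second row $(-1,0,\ldots,0,y)$) or, more uniformly, by applying the cyclic-shift-with-sign operator $\sigma$ introduced in the proof of Proposition~\ref{prop:algboundary}. Once you incorporate this adjustment, your argument goes through.
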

\begin{proof}
    For each of the types $(2,{\rm I}), (2,{\rm II})$ and $(2,{\rm III})$, we describe a $2$-dimensional set of lines inside the intersection of a corresponding stratum and the amplituhedron~${\cal A}_n$. 

    We make use of the operator $\sigma$, introduced in the proof of Proposition \ref{prop:algboundary}.
    
    The $n$ substrata of type $(2,{\rm I})$ are planes in ${\rm Gr}(2,4) \subset \mathbb{P}^5$ consisting of lines through the point $Z_i$. For $i = 1, \ldots, n$, consider the following $2$-dimensional positroid cells of ${\rm Gr}(2,n)_{\geq 0}$:
    \[ P_i \, = \, \left \{ \sigma^{i-1} \begin{pmatrix}
        0 & 1 & x_1 & x_2 & 0 & \cdots & 0 \\ 
         -1 & 0 & 0 & 0 & 0 & \cdots & 0 
    \end{pmatrix}  \in \mathbb{R}^{2 \times n} \, : \, (x_1, x_2) \in \mathbb{R}^2_{>0} \right \}. \]
    The $2$-dimensional image of $P_i$ under $Z$ is contained in the plane $(2,{\rm I},i)$.
    
    We turn to type $(2,{\rm II})$. These are $n$ planes in ${\rm Gr}(2,4) \subset \mathbb{P}^5$ consisting of lines contained in the plane $Z_{i-1}Z_iZ_{i+1}$ for $i = 1, \ldots, n$, where $Z_{-1} = Z_n$ and $Z_{n+1} = Z_1$. We define 
    \[ Q_i \, = \, \left \{ \sigma^{i-1} \begin{pmatrix}
        x & 0 & 0 & \cdots & 0 & -1 \\
        y & 1 & 0 & \cdots & 0 & 0
    \end{pmatrix}\in \mathbb{R}^{2 \times n} \, : \, (x,y) \in \mathbb{R}^2_{>0} \right \}.
    \]
    The image $Z(Q_i)$ is a $2$-dimensional set of lines contained in the plane $(2,{\rm II},i)$. 

    Finally, there are $\binom{n}{2}-n$ strata of type $(2,{\rm III})$. These are quadric surfaces in ${\rm Gr}(2,4)$ consisting of lines which meet $Z_iZ_{i+1}$ and $Z_jZ_{j+1}$, where $\{i, i+1\} \cap \{j,j+1\} = \emptyset$. The image of the following positroid cell lies in the quadric surface $(2,{\rm III},ij)$:
    \setcounter{MaxMatrixCols}{20}
    \[ S_{i,j} \, = \, \left \{ \begin{pmatrix}
       0 & \cdots & 0 & 1 & x & 0 & \cdots & 0 & 0 & 0 & 0 & \cdots & 0 \\
       0 & \cdots & 0 & 0 & 0 & 0 & \cdots & 0 & y & 1 & 0 & \cdots & 0
    \end{pmatrix}  \in \mathbb{R}^{2,n} \, : \, (x,y) \in \mathbb{R}^2_{>0} \right \}.\]
    Here the first nonzero entry in the first row occurs in column $i$, and the first nonzero entry in the second row is in column $j$. If $j = n$, one uses the following matrix instead: 
    \[ \begin{pmatrix}
        0 & 0 &\cdots & 0 & 1 & x & 0 & \cdots & 0 & 0 \\
        -1 & 0 & \cdots & 0 & 0 & 0 & 0 & \cdots & 0 & y 
    \end{pmatrix}. \qedhere \]
\end{proof}

\begin{lemma}\label{lem:dotouch}
    All $1$-dimensional strata of $\partial_a {\cal A}_n$ of types $(1,{\rm I})$ and $(1,{\rm II})$ intersect the amplituhedron in a semi-algebraic set of dimension 1. 
\end{lemma}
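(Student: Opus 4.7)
The plan is to mirror the strategy of the proof of Lemma~\ref{lem:codim2}: for each one-dimensional stratum $S$ of type $(1,{\rm I})$ or $(1,{\rm II})$ I will exhibit an explicit one-parameter family of totally nonnegative $2\times n$ matrices, i.e., a one-dimensional positroid cell in ${\rm Gr}(2,n)_{\geq 0}$, whose image under the amplituhedron map $X \mapsto X\cdot Z$ is contained in $S$ and sweeps out a one-dimensional arc in it. Since such images automatically lie in ${\cal A}_n$ by Definition~\ref{def:amplituh}, this yields a one-dimensional semi-algebraic subset of $S \cap {\cal A}_n$; because $S$ itself is one-dimensional, this is exactly what the lemma claims.

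For type $(1,{\rm I},i) = V_iL_{(i-1)(i+1)}$ the stratum is the pencil of lines through $Z_i$ in the plane $Z_{i-1}Z_iZ_{i+1}$. For $i=1$ I would take the cell
\[ T_1 \, = \, \left\{ \begin{pmatrix} 1 & 0 & 0 & \cdots & 0 & 0 \\ 0 & \mu & 0 & \cdots & 0 & \lambda \end{pmatrix} \, : \, (\lambda,\mu)\in \mathbb{R}^2_{>0} \right\}, \]
and apply the cyclic shift $\sigma^{i-1}$ from the proof of Proposition~\ref{prop:algboundary} to cover general $i$. Every matrix in $T_1$ has only two nonzero Pl\"ucker coordinates, $p_{12} = \mu$ and $p_{1n} = \lambda$, so it is totally nonnegative, and its image is the line spanned by $Z_1$ and $\mu Z_2 + \lambda Z_n$. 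For type $(1,{\rm II},ij) = V_iL_{j(j+1)}$ with $i\notin\{j,j+1\}$ the stratum is the pencil of lines through $Z_i$ meeting the line $Z_jZ_{j+1}$. For $i=1$ I would use a cell whose first row is the standard basis vector $e_1 \in \mathbb{R}^n$ and whose second row has positive entries only in columns $j$ and $j+1$; its image is the line through $Z_1$ and the corresponding positive combination of $Z_j,Z_{j+1}$. Again $\sigma^{i-1}$ reduces every admissible pair $(i,j)$ to the case $i=1$, and total nonnegativity is immediate since only two Pl\"ucker coordinates are nonzero.

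The remaining step is to verify that as the parameter varies, the image really consists of infinitely many distinct lines rather than collapsing to a point. Total positivity of $Z$ ensures that $Z_1,\ldots,Z_n$ are in general position in $\PP^3$; in particular $Z_i$ does not lie on $Z_{i-1}Z_{i+1}$ or on $Z_jZ_{j+1}$, so distinct positive combinations on those chords yield distinct lines through $Z_i$. I expect no real conceptual obstacle here: once the cells are written down, total nonnegativity and the one-dimensionality of the image are short direct computations that make no use of anything beyond positivity of $Z$ and the definitions recalled in Section~\ref{sec:2}. The only bookkeeping subtlety is to keep the cyclic wraparound straight via the operator $\sigma$, exactly as in the proof of Lemma~\ref{lem:codim2}.
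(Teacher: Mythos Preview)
Your proposal is correct and follows essentially the same strategy as the paper's proof: exhibit explicit one-dimensional positroid cells whose images land in the given stratum. The only cosmetic differences are that the paper uses a single parameter $x$ rather than a homogeneous pair $(\lambda,\mu)$ for type $(1,{\rm I})$, and for type $(1,{\rm II})$ it places the $e_i$ directly (handling $j=n$ by an ad hoc sign flip) instead of invoking $\sigma^{i-1}$; your use of $\sigma$ is arguably tidier since it absorbs the wraparound case automatically.
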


\begin{proof} Let $\sigma$ be as in the proof of Proposition \ref{prop:algboundary}. 
The image of the positroid cell
    \[ \left \{  \sigma^{i-1} \begin{pmatrix}
        1 & 0 & 0 & \cdots & 0 & 0\\
        0 & x & 0 & \cdots & 0 & 1
    \end{pmatrix}  \in \mathbb{R}^{2 \times n} \, : \,  x \in \mathbb{R}_{>0}\right \}\]
under the amplituhedron map is a one-dimensional family of lines contained in the stratum $(1,{\rm I},i)$ of lines through $Z_i$ that meet the line $Z_{i-1}Z_{i+1}$.

For $i \notin \{j,j+1\}$, we consider the stratum $(1,{\rm II},ij)$ of lines through $Z_i$ that meet the line $Z_{j}Z_{j+1}$. If $j < n$, this stratum contains the image of the positroid cell
    \[ \left \{  \begin{pmatrix}
        0 & \cdots & 0 & 1 & 0 &  \cdots & 0 & 0 & 0 & 0 & \cdots & 0 \\
        0 & \cdots & 0 & 0 & 0 &  \cdots & 0 & 1 & y & 0 & \cdots & 0
    \end{pmatrix}  \in \mathbb{R}^{2 \times n} \, : \,  y \in \mathbb{R}_{>0}\right \}.\]
The first nonzero entries in the rows of the matrix are in positions $i$ and $j$ respectively. The case $j = n$ can be done similarly to the previous lemma.
\end{proof}

The next step is showing that other strata do not intersect ${\cal A}_n$. We use the following~fact.
\begin{lemma} \label{lem:useful}
    Let $AB \in {\cal A}_n$ be a line in the amplituhedron, and let $X \in {\rm Gr}(2,n)_{\geq 0}$ be such that $X \cdot Z = AB$. If $\langle ABi(i+1) \rangle = 0$ for some $i \in [n]$, then the $2 \times (n-2)$-submatrix $X_{[n] \setminus \{i,i+1\}}$ of $X$ consisting of columns not indexed by $i, i+1$ has rank one.
\end{lemma}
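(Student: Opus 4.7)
The plan is to expand the bracket $\langle AB\,i(i{+}1)\rangle$ directly in terms of the Pl\"ucker coordinates of $X$ and the $4\times 4$ brackets of $Z$, and then to exploit the fact that both factors in each resulting summand are nonnegative. Write $X$ with rows $x,y\in\R^n$, so that the rows of $X\cdot Z = AB$ are $A=\sum_k x_k Z_k$ and $B=\sum_k y_k Z_k$. Multilinearity and antisymmetry of the determinant give
\[ \langle AB\,i(i{+}1)\rangle \;=\; \det(A,B,Z_i,Z_{i+1}) \;=\; \sum_{1\le k<l\le n}\,p_{kl}(X)\cdot\langle Z_kZ_lZ_iZ_{i+1}\rangle, \]
and the summands with $\{k,l\}\cap\{i,i{+}1\}\ne\emptyset$ drop out because they contain a repeated column.

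The next step would be to verify that every remaining bracket $\langle Z_kZ_lZ_iZ_{i+1}\rangle$ is strictly positive. For a pair $k<l$ disjoint from $\{i,i{+}1\}$, sorting the four indices $\{k,l,i,i{+}1\}$ into increasing order requires an even number of transpositions in each of the three possible configurations ($l<i$; or $k<i$ and $l>i+1$; or $i+1<k$). Hence the bracket equals the $4\times 4$ minor of $Z$ on the sorted rows, which is positive by total positivity of $Z$.

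Since $X\in{\rm Gr}(2,n)_{\ge 0}$, we also have $p_{kl}(X)\ge 0$ for all $k<l$. The displayed identity therefore exhibits $\langle AB\,i(i{+}1)\rangle$ as a sum of nonnegative terms, and the hypothesis that this sum vanishes forces $p_{kl}(X)=0$ for every $k<l$ with $\{k,l\}\cap\{i,i{+}1\}=\emptyset$. These are precisely the $2\times 2$ minors of $X_{[n]\setminus\{i,i+1\}}$, so this submatrix has rank at most one, which is the content of the lemma. The only delicate part of the argument is the sign bookkeeping for $\langle Z_kZ_lZ_iZ_{i+1}\rangle$ in the second paragraph; everything else is a short linear algebra computation.
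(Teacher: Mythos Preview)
Your proof is correct and is essentially the same argument as the paper's: both express $\langle AB\,i(i{+}1)\rangle$ as a sum $\sum p_{kl}(X)\cdot(\text{positive minor of }Z)$ and conclude from vanishing of a nonnegative sum. The paper packages this via Cauchy--Binet applied to an auxiliary $4\times n$ matrix $C_i$ (stacking $X$ on two unit rows), whereas you expand $\det(A,B,Z_i,Z_{i+1})$ multilinearly; your explicit parity check for $\langle Z_kZ_lZ_iZ_{i+1}\rangle$ makes transparent what the paper summarizes as ``since the identity block is consecutive''.
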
 
\begin{proof}
    The bracket $\langle ABi(i+1) \rangle$ is the determinant of 
   \[ C_i \cdot Z \, = \, \begin{pmatrix}
       \times & \times & \cdots & \times & \times & \cdots & \times & \times \\
       \times & \times & \cdots & \times & \times & \cdots & \times & \times \\
       0 & 0 & \cdots & 1 & 0 & \cdots & 0 & 0 \\ 
       0 & 0 & \cdots & 0 & 1 & \cdots & 0 & 0
   \end{pmatrix} \cdot Z \, = \, \begin{pmatrix}
       X \cdot Z \\
       Z_i \\ 
       Z_{i+1}
   \end{pmatrix}, \]
   where the $\times$ symbols in $C_i$ represent entries of $X$, and the entries $1$ appear in columns $i$ and $i+1$. Since $AB$ hits $Z_iZ_{i+1}$, this determinant is zero. By the Cauchy-Binet formula, it also equals the sum of products of determinants $\langle ABi(i+1) \rangle = \sum_S |(C_i)_S| |Z_S|$, where $S \subset [n]$ with $ |S| = 4$ selects a $4 \times 4$ submatrix.  Since the identity block in the second row is consecutive, the nonnegativity of $X$ implies that the minors $|(C_i)_S|$ are nonnegative. With the positivity of $Z$ it follows that all terms in the Cauchy-Binet expansion are nonnegative, so that all maximal minors $|(C_i)_S|$ of $C_i$ are necessarily zero. These include all minors of $X_{[n] \setminus \{i, i+1\}}$, which concludes the proof. 
\end{proof}
\begin{lemma} \label{lem:3IIIbdoesnottouch}
    For $n \geq 5$, any $1$-dimensional closed stratum of $\partial_a{\cal A}_n$ of type $(1,{\rm III})$ does not intersect the amplituhedron ${\cal A}_n$. 
\end{lemma}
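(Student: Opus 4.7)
I plan to argue by contradiction. Suppose $AB \in {\cal A}_n$ lies in the closed stratum $(1,\mathrm{III},ij)=P_{(i-1)i(i+1)}L_{j(j+1)}$. Then $AB\subset Z_{i-1}Z_iZ_{i+1}$ and $AB$ meets $Z_jZ_{j+1}$, so the Schubert conditions force
$\langle AB(i-1)i\rangle = \langle ABi(i+1)\rangle = \langle ABj(j+1)\rangle = 0.$
Pick $X \in \mathrm{Gr}(2,n)_{\geq 0}$ with $X\cdot Z=AB$. Applying Lemma~\ref{lem:useful} to each of these vanishing brackets shows that the three submatrices $X_{[n]\setminus\{i-1,i\}}$, $X_{[n]\setminus\{i,i+1\}}$, $X_{[n]\setminus\{j,j+1\}}$ each have rank at most one; denote the corresponding (possibly zero) column spans by $V_a,V_b,V_c\subseteq\mathbb{R}^2$.

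The key structural fact, which I would exploit repeatedly, is that the disjointness $\{i-1,i,i+1\}\cap\{j,j+1\}=\emptyset$ yields $([n]\setminus\{i-1,i\})\cup([n]\setminus\{j,j+1\})=[n]$, and likewise with $\{i,i+1\}$ in place of $\{i-1,i\}$. Consequently, either $V_a=V_c$ or $V_b=V_c$ as $1$-dimensional subspaces would place every column of $X$ in a common line, contradicting $\mathrm{rk}(X)=2$. The first step of the proof is therefore to find a nonzero column common to the relevant index sets: any nonzero $X_k$ with $k\notin\{i-1,i,i+1,j,j+1\}$ (available for $n\geq 6$), or any nonzero $X_{i-1}$ or $X_{i+1}$, forces equality among two of the $V$'s and closes the case. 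For $n=5$ the set $\{i-1,i,i+1,j,j+1\}$ exhausts $[5]$, and a short enumeration handles the remaining possibilities.

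The only residual configuration has $X$ supported on columns $\{i,j,j+1\}$, with $X_j,X_{j+1}$ proportional (say $X_{j+1}=\nu X_j$) and $X_i$ independent from them. Then $X\cdot Z$ is the line through $Z_i$ and $q=Z_j+\nu Z_{j+1}$, so $AB$ passes through $Z_i$; the extra requirement $AB\subset Z_{i-1}Z_iZ_{i+1}$ pins $\nu$ to the specific value
$\nu_0 = -\,\det(Z_j,Z_{i-1},Z_i,Z_{i+1})/\det(Z_{j+1},Z_{i-1},Z_i,Z_{i+1}).$
The main obstacle is a permutation-sign bookkeeping: in every cyclic arrangement of $i,j$ compatible with the disjointness hypothesis, the sign of $\nu_0$ imposed by total positivity of $Z$ turns out opposite to the sign that $\nu$ must carry for the $2\times 2$ minors $p_{ij}$ and $p_{i(j+1)}=\pm\nu\,p_{ij}$ of $X$ to be consistent with total nonnegativity. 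Hence $p_{ij}=0$, the columns $X_i,X_j,X_{j+1}$ become proportional, and $\mathrm{rk}(X)\leq 1$, delivering the final contradiction.
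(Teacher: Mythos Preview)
Your reduction via Lemma~\ref{lem:useful} is essentially the paper's: both arrive at the conclusion that any nonnegative preimage $X$ has all columns $k\neq i$ lying in a common one-dimensional span, so that $AB$ must pass through $Z_i$. (The paper phrases this via a clean pigeonhole: any two columns $k,l\neq i$ each sit in at least two of the three rank-one submatrices, hence share one and are proportional. Your $V_a,V_b,V_c$ bookkeeping reaches the same endpoint with a bit more case analysis.)

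The divergence is in the final step, ruling out the single candidate line $AB\in (1,{\rm II},ij)\cap(1,{\rm III},ij)$. The paper works on the \emph{image} side: it writes $A=Z_i$ and $B$ explicitly as a combination of $Z_{i-1},Z_i,Z_{i+1}$, then computes
\[
\langle AB(i-2)(i-1)\rangle \;=\; \langle (i-1)ij(j+1)\rangle\cdot\langle i(i+1)(i-2)(i-1)\rangle, \qquad
\langle AB(i+1)(i+2)\rangle \;=\; \langle i(i+1)j(j+1)\rangle\cdot\langle i(i-1)(i+1)(i+2)\rangle,
\]
and reads off opposite signs from total positivity of $Z$, violating condition~1 of Definition~\ref{def:amplituhedron2}. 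You instead stay on the \emph{preimage} side and argue that the forced value $\nu_0$ of the proportionality constant is sign-incompatible with the Pl\"ucker nonnegativity of $X$. Your approach is correct---one can check in each cyclic placement of $\{i-1,i,i+1\}$ and $\{j,j+1\}$ (including the wrap-around $j=n$, where nonnegativity demands $\nu\le 0$ while $\nu_0>0$) that the signs conflict---but this is precisely the ``permutation-sign bookkeeping'' you flag as the main obstacle, and it remains a sketch here. The paper's two-bracket computation sidesteps that enumeration entirely and is uniform in $i,j$. A minor omission in your write-up: the degenerate cases $X_j=0$ or $X_{j+1}=0$ should be dispatched separately (they force $AB=Z_iZ_{j+1}$ or $Z_iZ_j$, which cannot lie in the plane $Z_{i-1}Z_iZ_{i+1}$ by positivity of $Z$).
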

\begin{proof}
A $1$-dimensional stratum of $\partial_a {\cal A}_n$ of type $(1,{\rm III})$ is determined by two indices $i,j$ satisfying $\{i-1,i,i+1\} \cap \{j,j+1\} = \emptyset$. It consists of the lines $AB$ contained in the plane $Z_{i-1}Z_{i}Z_{i+1}$ and intersecting the line $Z_{j}Z_{j+1}$.
   Suppose $AB = X \cdot Z$ lies in the amplituhedron, and $\langle AB(i-1)i\rangle = \langle ABi(i+1) \rangle = \langle AB j(j+1) \rangle = 0$. Then the submatrices $X_{[n] \setminus \{i-1,i\}}$, $X_{[n] \setminus \{i,i+1\}}$ and $X_{[n] \setminus \{j,j+1\}}$ have rank 1 by Lemma \ref{lem:useful}. Every column belongs to at least two of these matrices, except column $i$. Since ${\rm rank}(X)=2$, we can bring $X$ into the~form 
   \[ X \, = \, \begin{pmatrix}
       0 & \cdots & 0 & \times & 0 & \cdots & 0 \\
       \times & \cdots & \times & \times & \times & \cdots & \times 
   \end{pmatrix}.\]
   Here the only nonzero entry in the first row appears in the $i$-th column. We conclude that $AB = X \cdot Z$ is a line through $Z_i$, so that it is contained in the closed stratum $(1, {\rm II},ij)$ given by $V_iL_{j(j+1)}$. The unique intersection point of the
   $(1,{\rm II},ij)$-stratum with $(1,{\rm III},ij)$ is the line through $Z_i$ and through the intersection point of the line $Z_jZ_{j+1}$ with the plane $Z_{i-1}Z_iZ_{i+1}$. It remains to show that this line, denoted $AB$ in what follows, is not contained in the amplituhedron. Let $\langle ijkl\rangle$ denote the $4 \times 4$-minor of $Z$ indexed by $i,j,k,l$. We can~set
   \[A \, = \, Z_i, \quad B \, = \, \langle i (i+1)k(k+1)\rangle  Z_{i-1} - \langle (i-1)(i+1)k(k+1) \rangle Z_i+ \langle (i-1)ik(k+1)\rangle Z_{i+1}.\]
   The expression for $B$ is the intersection point mentioned above. We use this to compute
   \begin{align*}
       \langle AB(i-2)(i-1) \rangle \, &= \,  \langle (i-1)ik(k+1) \rangle \cdot \langle i(i+1)(i-2)(i-1) \rangle > 0,\\
       \langle AB(i+1)(i+2) \rangle \, &= \, \langle i(i+1)k(k+1) \rangle \cdot \langle i(i-1)(i+1)(i+2) \rangle < 0.
   \end{align*}
  The inequalities follow from the positivity of $Z$. But ${\rm sign}(\langle AB(i-2)(i-1) \rangle) = - {\rm sign}(\langle AB(i+1)(i+2) \rangle)$ violates the inequalities from Definition \ref{def:amplituhedron2}, so $AB \notin {\cal A}_n$. 
\end{proof}

\begin{lemma} \label{lem:3IIdoesnottouch}
For $n \geq 6$, any $1$-dimensional closed stratum of $\partial_a {\cal A}_n$ of type $(1,{\rm IV})$ does not intersect the amplituhedron ${\cal A}_n$.
\end{lemma}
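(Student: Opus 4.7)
The plan is to argue by contradiction, mimicking the strategy of Lemma \ref{lem:3IIIbdoesnottouch} but exploiting the fact that the three vanishing brackets now correspond to pairwise disjoint pairs of indices. Suppose for contradiction that $AB \in \cA_n$ lies in a closed stratum of type $(1,{\rm IV},ijk)$, so $AB = X\cdot Z$ for some $X \in {\rm Gr}(2,n)_{\geq 0}$ and $\langle AB\, i(i+1)\rangle = \langle AB\, j(j+1)\rangle = \langle AB\, k(k+1)\rangle = 0$, with the pairs $I := \{i,i+1\}$, $J := \{j,j+1\}$, $K := \{k,k+1\}$ pairwise disjoint. The hypothesis $n \ge 6$ is used precisely to allow three such disjoint pairs to exist. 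Applying Lemma \ref{lem:useful} to each of the three vanishing brackets, the submatrices $X_{[n]\setminus I}$, $X_{[n]\setminus J}$, $X_{[n]\setminus K}$ each have rank at most one, and I denote their column spans by $V_I, V_J, V_K \subseteq \mathbb{R}^2$.

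The combinatorial crux is that, since $I,J,K$ are pairwise disjoint, each index $c \in [n]$ belongs to at most one of them, so every column $X_c$ of $X$ appears as a column in at least two of the three submatrices. In particular, every column $X_c$ lies in at least two of $V_I$, $V_J$, $V_K$. First I would rule out that any $V_*$ is the zero subspace: if, say, $V_I = \{0\}$, then the columns of $X$ are supported on $I$, and since $I\cap J = \emptyset$ those two columns are the only nonzero columns of $X_{[n]\setminus J}$. For ${\rm rank}(X)=2$ they must be linearly independent, contradicting ${\rm rank}(X_{[n]\setminus J}) \le 1$. Hence $V_I, V_J, V_K$ are all genuine one-dimensional subspaces of $\mathbb{R}^2$.

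A short case analysis on how many of the lines $V_I, V_J, V_K$ coincide then closes the argument, using that two distinct lines through the origin in $\mathbb{R}^2$ meet only in $\{0\}$. If $V_I, V_J, V_K$ are pairwise distinct, every nonzero column of $X$ would lie in the intersection of two distinct lines, forcing $X = 0$. If exactly two coincide, say $V_I = V_J \ne V_K$, then any column lying in an intersection involving $V_K$ must vanish, while a column in $V_I \cap V_J = V_I$ stays in $V_I$; thus all nonzero columns of $X$ lie in the single line $V_I$, giving ${\rm rank}(X) \le 1$. If all three lines coincide, the same conclusion is immediate. In every case ${\rm rank}(X) \le 1$, contradicting ${\rm rank}(X) = 2$.

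The main obstacle is spotting the combinatorial mechanism: once one observes that pairwise disjointness of $I,J,K$ forces each column of $X$ to lie in the intersection of two one-dimensional subspaces, the linear-algebraic contradiction is essentially automatic. In contrast with Lemma \ref{lem:3IIIbdoesnottouch}, no Pl\"ucker-coordinate computation invoking the positivity of $Z$ is needed, since the contradiction is already visible at the level of the positroid representative $X$ alone.
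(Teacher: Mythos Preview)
Your argument is correct and follows the same route as the paper: apply Lemma~\ref{lem:useful} to each of the three vanishing brackets, observe that the pairwise disjointness of $I,J,K$ forces every column of $X$ to lie in at least two of the three rank-$\leq 1$ submatrices, and conclude that $X$ has rank at most one. The paper compresses this last step into a single sentence (``Hence $X$ has rank one''), whereas you spell out the case analysis on the coincidences among $V_I,V_J,V_K$; your version is more detailed but not substantively different.
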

\begin{proof}
   A $1$-dimensional stratum of $\partial_a {\cal A}_n$ of type $(1,{\rm IV})$ is determined by three indices $i,j,k$ satisfying $\{s,s+1\} \cap \{t,t+1\}  = \emptyset, \{s,t\} \in \binom{\{i,j,k\}}{2}$. It consists of the lines $AB$ passing through the lines $Z_{i}Z_{i+1}, Z_{j}Z_{j+1}$ and $Z_{k}Z_{k+1}$. If such a line is in ${\cal A}_n$, it is given by $X \cdot Z \in {\rm Gr}(2,4)$, where $X \in {\rm Gr}(2,n)_{\geq 0}$ is represented by a rank-$2$ totally nonnegative $2 \times n$-matrix. The submatrices $X_{[n] \setminus \{i, i+1\}}$, $X_{[n] \setminus \{j, j+1\}}$ and $X_{[n] \setminus \{k, k+1\}}$ have rank one by Lemma \ref{lem:useful}. Each column belongs to at least two such submatrices. Hence $X$ has rank one, a contradiction. 
\end{proof}

\begin{proof}[Proof of Theorem \ref{thm:residualstrata}]
Note that the vertices in the algebraic boundary of type $(0,{\rm I})$ lie on the amplituhedron. By the inclusion description in Table \ref{tab:vertices in curves}, all other vertices are contained in $1$-dimensional strata of type $(1,{\rm III})$ or $(1,{\rm IV})$, which by Lemmas \ref{lem:3IIdoesnottouch} and \ref{lem:3IIIbdoesnottouch} do not intersect the amplituhedron.
With Lemmas \ref{lem:codim2} and \ref{lem:dotouch} the proof is complete.
\end{proof}

The last column of Table \ref{tab:complexstrata} summarizes which closed strata intersect the amplituhedron (b) and which do not (r). The letters b and r stand for \emph{boundary} and \emph{residual}.
The strata which do not intersect the amplituhedron, i.e., those identified in Theorem \ref{thm:residualstrata} form the \emph{residual arrangement} of the amplituhedron, see Example \ref{ex:canformpentagon} and Definition \ref{def:resarr}. The others contain the boundary faces of the amplituhedron. 
We use the word face here in analogy to convex sets (like polytopes), but mean the relative interior of the positive geometry induced on the irreducible component. The boundary of each face is a union of faces of smaller dimension. With the description of incidences of strata in $\partial_a {\cal A}_n$ from Section \ref{sec:3}, and the characterization of the strata that do not intersect the amplituhedron (Theorem \ref{thm:residualstrata}), we can describe the boundary face structure of the amplituhedron completely. 

Our next Proposition gives an exhaustive list of all faces. It essentially summarizes the stratification in Section \ref{sec:3} and the conclusion of Theorem \ref{thm:residualstrata}. Figures \ref{fig:schlegel3D} and \ref{fig:schlegel4D} help to verify this list. Figure \ref{fig:schlegel3D} shows a Schlegel diagram of the 3-dimensional boundary polytope $(3,{\rm I},1)\cap {\cal A}_7$ of ${\cal A}_7$, with respect to its facet $(2,{\rm I},1) \cap {\cal A}_7$. Boundary line segments are labeled in blue, and boundary polygons in red. Figure \ref{fig:schlegel4D} shows the Schlegel diagram of ${\cal A}_5$ and ${\cal A}_6$, both with respect to their 3-dimensional boundary polytope inside $L_{12}$. The edges of that polytope are shown in blue. The black edges are in the interior. The vertex $V_iV_j$ is labeled~$ij$. 

\begin{proposition}\label{prop:boundary faces} The boundary faces, by dimension, of the amplituhedron are the~following.
\begin{itemize}
    \item Boundary vertices are the $0$-dimensional strata of type $(0,{\rm I})$.
    The vertices of type $(0,{\rm I},i(i+1))$ lie on the four boundary lines  $(1,{\rm I},i)$, $(1,{\rm I},i+1)$, $(1,{\rm II},i(i+1))$ and $(1,{\rm II},(i+1)(i-1))$.
    The vertices of type $(0,{\rm I},ij),j \notin \{i-1,i,i+1\}$ lie on the four boundary lines  $(1,{\rm II},i(j-1))$, $(1,{\rm II},ij)$, $(1,{\rm II},j(i-1))$ and $(1,{\rm II},ji)$.
    \item Boundary line segments are contained in the lines of type $(1,{\rm I})$ and $(1,{\rm II})$.  The line segment in $(1,{\rm I},i)$ has the two boundary vertices $(0,{\rm I},(i-1)i)$ and $(0,{\rm I},i(i+1))$. The line segment in $(1,{\rm II},ij)$ has the two boundary vertices $(0,{\rm I},ij)$ and $(0,{\rm I},i(j+1))$. 
    \item Boundary polygons are contained in the planes of type $(2,{\rm I})$ and $(2,{\rm II})$ and boundary quadrilaterals are contained in the quadric surfaces of type $(2,{\rm III})$.
    The plane $(2,{\rm I},i)$ contains the convex boundary $(n-1)$-gon with vertices in cyclic order $(0,{\rm I},ij),j=i+1,\ldots,i-1$. Its $n-1$ edges are contained in 
    the line $(1,{\rm I},i)$ and the $n-2$ lines $(1,{\rm II},ij), j\notin\{i-1,i\}$.  The plane $(2,{\rm II},i)$ contains the boundary triangle with vertices 
    $(0,{\rm I},(i-1)i),(0,{\rm I},i(i+1)),(0,{\rm I},(i-1)(i+1))$. Its edges are contained in the lines $(1,{\rm II},(i-1)i)$,$(1,{\rm I},i)$ and $(1,{\rm II},(i+1)(i-1))$. 
    The quadric surface $(2,{\rm III},ij)$ contains the boundary quadrilateral with vertices, in cyclic order,  $(0,{\rm I},ij)$, $(0,{\rm I},(i+1)j)$, $(0,{\rm I},(i+1)(j+1))$ and $(0,{\rm I},i(j+1))$. The four edges are contained in the 
    lines $(1,{\rm II},ji)$,$(1,{\rm II},(i+1)j)$,$(1,{\rm II},(j+1)i)$,$(1,{\rm II},ij)$.
    \item Boundary $3$-dimensional polytopes are contained in the quadric threefolds of type $(3,{\rm I})$. The boundary polytope inside the quadric threefold $(3,{\rm I},i)$ has $n+1$ facets. Among those, $n-3$ are the boundary quadrilaterals in the quadric surfaces $(2,{\rm III},ij), j\notin\{i-1,i,i+1\}$, two are boundary $(n-1)$-gons in the planes $(2,{\rm I},i)$ $(2,{\rm I},i+1)$  and two are boundary triangles in the planes $(2,{\rm II},i)$ $(2,{\rm II},i+1)$. These planes and quadric surfaces form $n+1$ hyperplane sections of the quadric threefold $(3,{\rm I},i)$. 
\end{itemize}
\end{proposition}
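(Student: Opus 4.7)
The plan is to read the face list off from Theorems \ref{thm:stratanew} and \ref{thm:residualstrata} using the incidence data compiled in Table \ref{tab:vertices in curves} and Figure \ref{fig:rough stratification}. The organizing principle is that a boundary face of $\cA_n$ is the relative interior (in its enclosing stratum) of $\cA_n \cap S$ for a closed stratum $S \subset \partial_a \cA_n$, and by Theorem \ref{thm:residualstrata} the only $S$ contributing a non-empty face are those marked \textbf{b} in Table \ref{tab:complexstrata}: types $(3,{\rm I})$, $(2,{\rm I})$, $(2,{\rm II})$, $(2,{\rm III})$, $(1,{\rm I})$, $(1,{\rm II})$, and $(0,{\rm I})$.

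First I would confirm the dimensions and the global incidence pattern. For each \textbf{b}-type $S$, Lemmas \ref{lem:codim2} and \ref{lem:dotouch}, together with an analogous positroid cell for the 3-dimensional type $(3,{\rm I})$, exhibit an explicit positroid cell whose amplituhedron image lies in $S$ and has the same dimension as $S$. The topological boundary of the face inside $S$ lies in the union of lower-dimensional strata of $\partial_a \cA_n$ contained in $\bar S$; filtering these to \textbf{b}-strata via Figure \ref{fig:rough stratification} and reading Table \ref{tab:vertices in curves} pins down each incidence relation. For example, the segment in $(1,{\rm II},ij)$ can only have $(0,{\rm I})$-type endpoints, and Table \ref{tab:vertices in curves} identifies those as $(0,{\rm I},ij)$ and $(0,{\rm I},i(j+1))$.

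Second I would determine the explicit combinatorial shape of each face. The 1-faces are line segments because each is the closure of the image of a connected 1-parameter positroid cell with two boundary vertices. For the 2-faces I would project from the relevant incidence center: lines through $Z_i$ project from $Z_i$ to points in $\mathbb{P}^2$, and by total positivity the image of $\cA_n \cap (2,{\rm I},i)$ is the cyclic $(n-1)$-gon with vertices $\bar Z_j$, $j \neq i$; analogously $(2,{\rm II},i)$ reduces to a triangle inside the plane $Z_{i-1}Z_iZ_{i+1}$, and $(2,{\rm III},ij)$ becomes a quadrilateral. For the 3-dimensional face inside the quadric threefold $(3,{\rm I},i)$, the candidate facets are its 2-dimensional \textbf{b}-substrata, which by Figure \ref{fig:rough stratification} and direct inspection of the Schubert closures are exactly $(2,{\rm I},i)$, $(2,{\rm I},i+1)$, $(2,{\rm II},i)$, $(2,{\rm II},i+1)$ and the $n-3$ quadric surfaces $(2,{\rm III},ij)$ with $j \notin \{i-1,i,i+1\}$, totalling $n+1$.

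The main technical obstacle is verifying that the face in $(3,{\rm I},i)$ is genuinely a 3-polytope with precisely those $n+1$ facets, rather than a larger or more exotic semi-algebraic subset of the quadric threefold. I expect this to follow from the same projection strategy: parameterize lines meeting $Z_iZ_{i+1}$ by an intersection point on that line together with a transverse direction, and use the positivity of $Z$ to identify the image of the corresponding positroid cells with a cyclic polytope whose face lattice is classical. Alternatively, a dimension-counting argument combining Lemma \ref{lem:codim2} with Theorem \ref{thm:residualstrata} would work: every codimension-one boundary piece of the 3-face must sit inside a 2-dimensional \textbf{b}-stratum contained in $(3,{\rm I},i)$, and the previous step has already shown that each such stratum carries a full-dimensional 2-face meeting the 3-face along its entire boundary.
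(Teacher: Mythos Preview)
Your plan is correct and follows the paper's approach: deduce the face list from the stratification in Theorem~\ref{thm:stratanew}, filter to the \textbf{b}-strata via Theorem~\ref{thm:residualstrata}, and read off incidences from Table~\ref{tab:vertices in curves} and Figure~\ref{fig:rough stratification}. The paper's actual proof is a single sentence: it treats the proposition as a bookkeeping summary of those prior results and only remarks that the $(n-1)$-gon in a plane of type $(2,{\rm I})$ is convex because $Z$ is totally positive. In particular, your anticipated ``main technical obstacle'' (verifying that the $3$-face inside $(3,{\rm I},i)$ is a genuine polytope with exactly the $n+1$ listed facets) and your projection arguments for the $2$-faces are not carried out in the paper; they are regarded as consequences of the already-established stratification and the fact that each \textbf{b}-stratum meets $\cA_n$ in the expected dimension (Lemmas~\ref{lem:codim2} and~\ref{lem:dotouch}). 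Your more careful outline would make the argument more self-contained, but it is not what the paper does.
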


\begin{proof}
It remains to note that the $(n-1)$-gon in the plane of type $(2,{\rm I})$ is a convex $(n-1)$-gon, by the assumption that $Z$ is positive. 
\end{proof}

\begin{figure}[h!]
    \centering
    \includegraphics[height = 6cm]{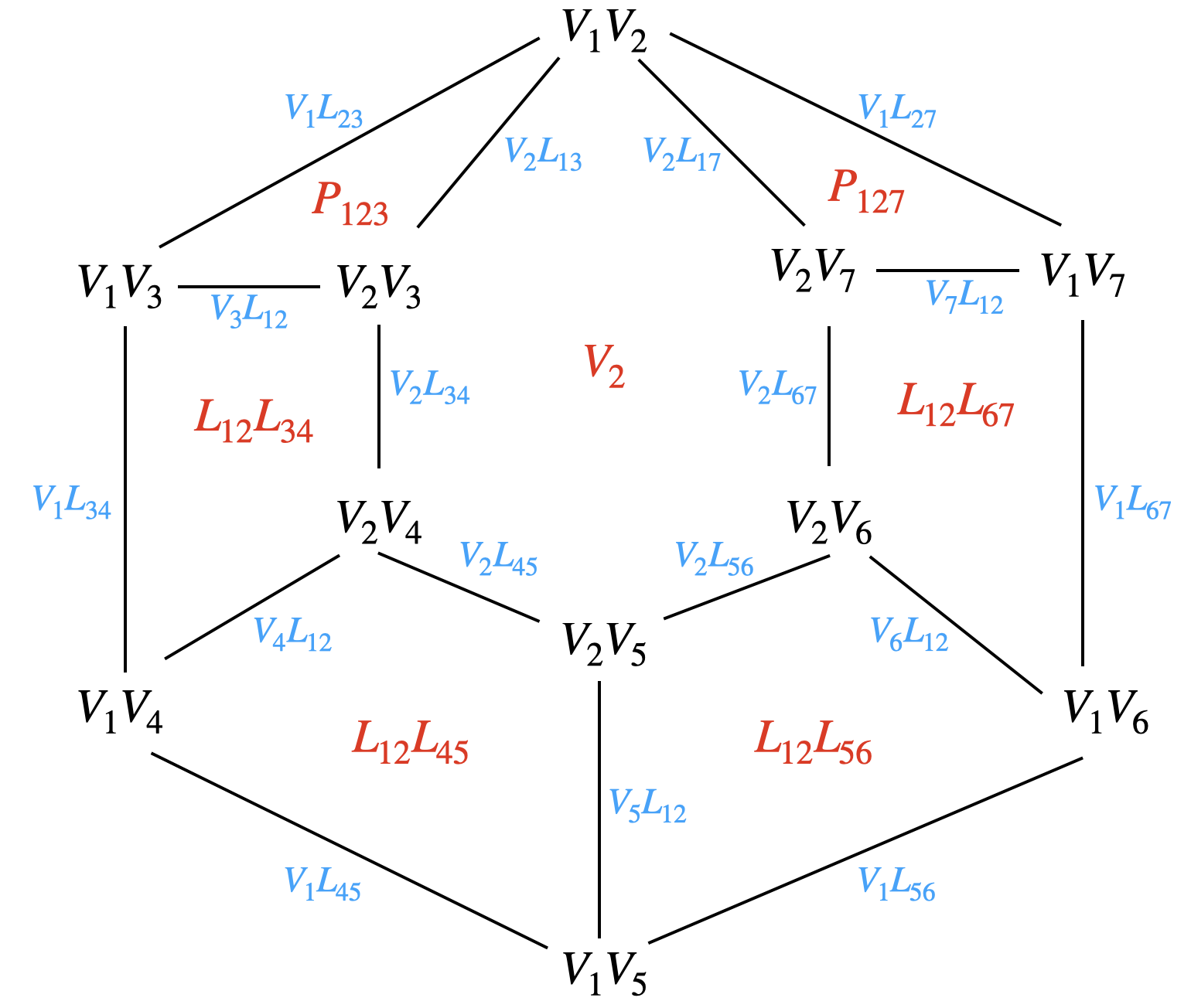}
    \caption{Schlegel diagram of the 3-dimensional boundary $ (3,{\rm I},1) \cap {\cal A}_7$.}
    \label{fig:schlegel3D}
\end{figure}

\begin{figure}[h!]
    \centering
    \includegraphics[height = 5cm]{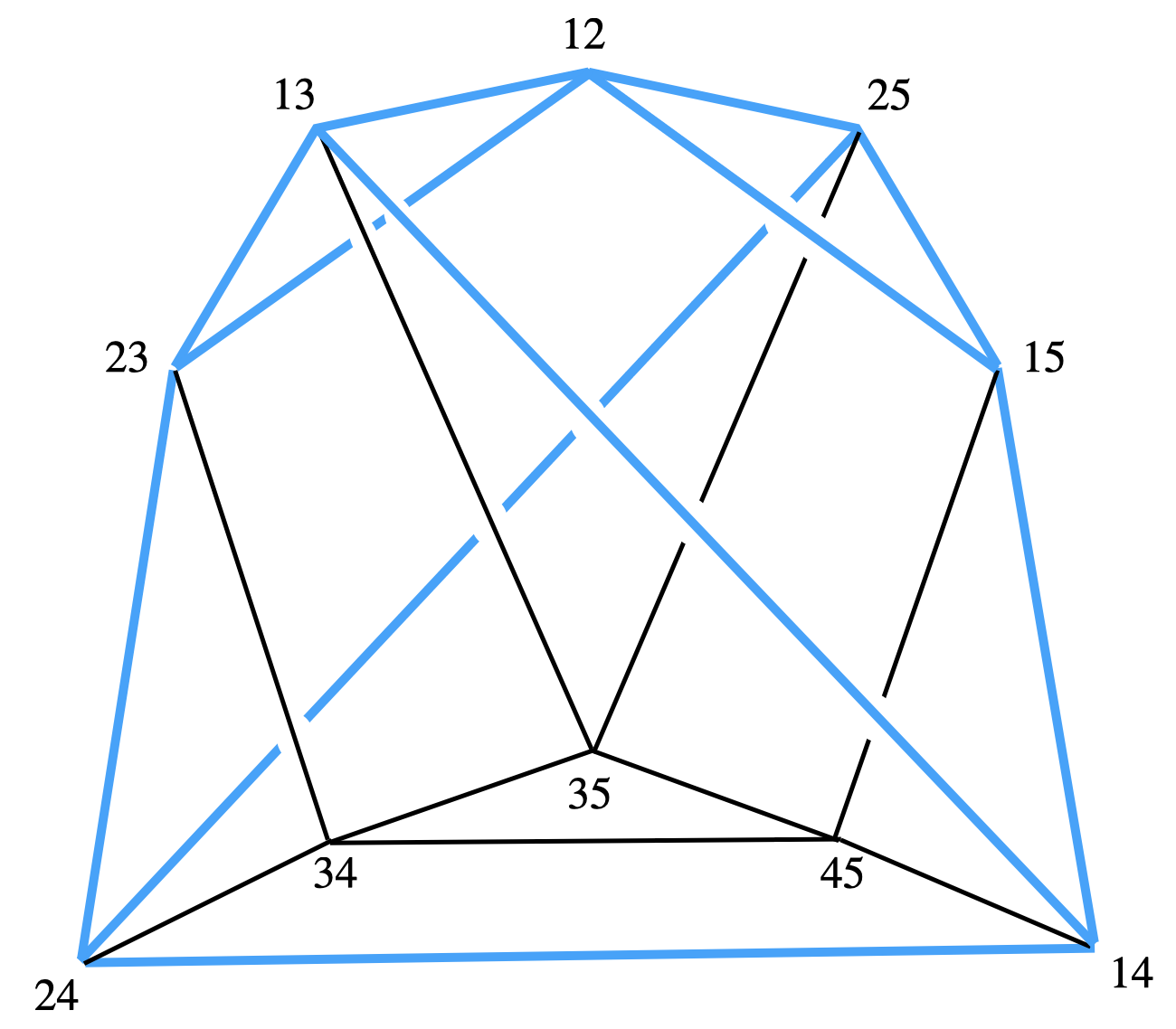} \quad 
    \includegraphics[height = 5cm]{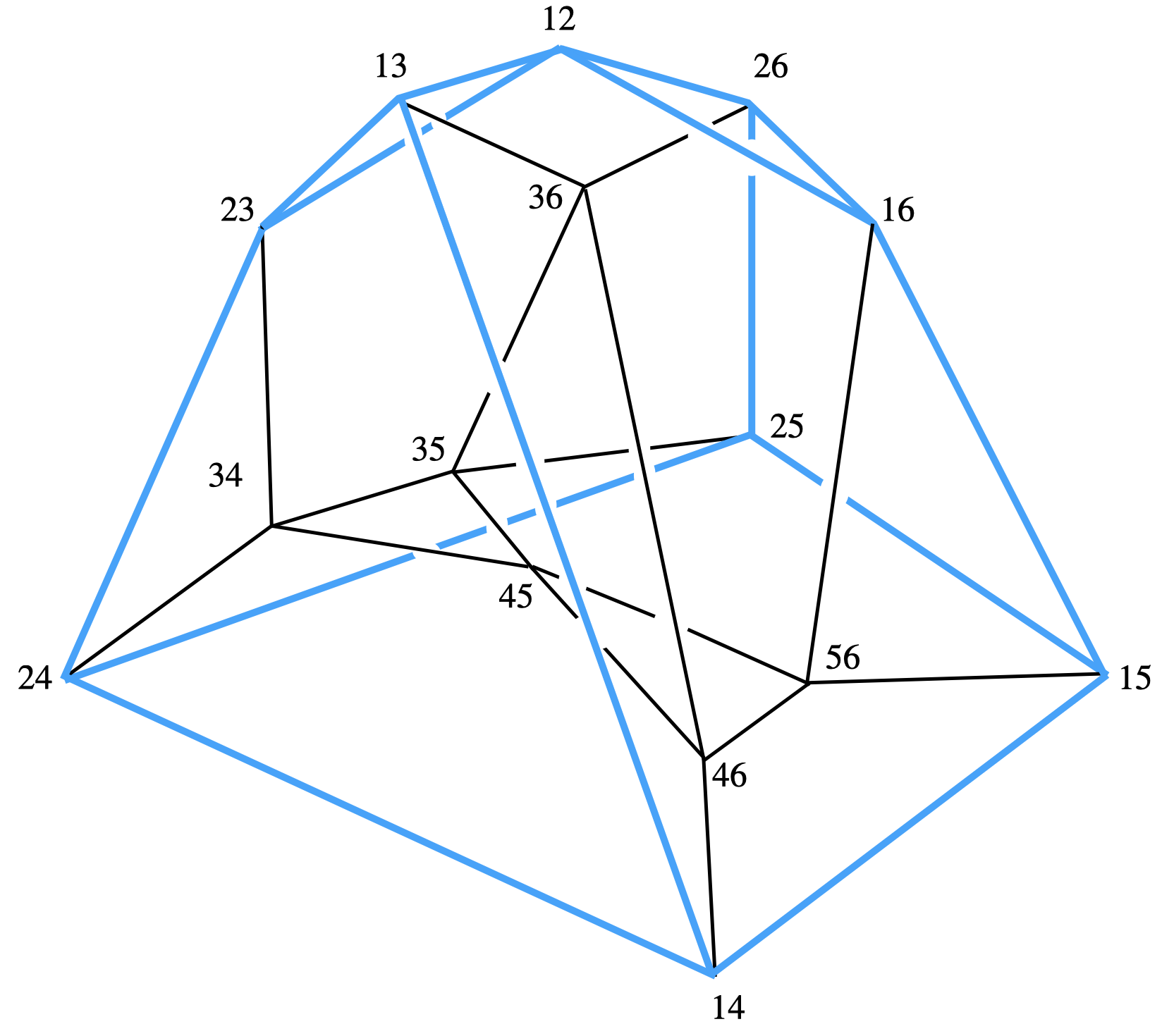}
    \caption{Schlegel diagrams of ${\cal A}_5$ and ${\cal A}_6$. }
    \label{fig:schlegel4D}
\end{figure}

\begin{corollary}
Every boundary vertex of the amplituhedron is contained in at most four boundary facets. Each $3$-dimensional facet is combinatorially a polytope in a quadric threefold of type $(3,{\rm I})$. Its~only non-simple vertex is $(0,{\rm I},i(i+1))$, where two plane triangles and two plane $(n-1)$-gons meet. 
    \end{corollary}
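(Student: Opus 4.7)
The plan is to extract all three statements of the corollary from the face description in Proposition~\ref{prop:boundary faces} by direct combinatorial book-keeping. A boundary vertex has the form $(0,{\rm I},jk)$, identified with the line $Z_jZ_k \in {\rm Gr}(2,4)$. It lies in the boundary facet inside $(3,{\rm I},l)$ exactly when $Z_jZ_k$ meets $Z_lZ_{l+1}$. Under Assumption~\ref{assum:general}, two lines of the form $Z_aZ_b$ and $Z_cZ_d$ meet only by sharing one of the $Z_r$'s, so the admissible values of $l$ are precisely $\{j-1,j,k-1,k\}$ cyclically. This set has size $4$ if $\{j,k\}$ is not a pair of consecutive indices, and size $3$ when $\{j,k\} = \{i,i+1\}$, which proves the first claim.

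For the remaining claims, fix an index $i$ and write $F_i := (3,{\rm I},i) \cap \cA_n$. Proposition~\ref{prop:boundary faces} describes $F_i$ as a bounded region in the real quadric threefold $(3,{\rm I},i)(\mathbb{R})$ whose boundary is the union of $n+1$ convex polygonal $2$-cells (two $(n-1)$-gons, two triangles, and $n-3$ quadrilaterals) glued along their common edges and vertices. These data realise $F_i$ as a $3$-polytope in the combinatorial sense, proving the second claim.

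For the third claim, I count the $2$-faces of $F_i$ through each vertex. The vertex $(0,{\rm I},i(i+1))$ appears in both $(n-1)$-gons $(2,{\rm I},i)$, $(2,{\rm I},i+1)$ and in both triangles $(2,{\rm II},i)$, $(2,{\rm II},i+1)$, so it lies in $4$ faces. Every other vertex of $F_i$ has the form $(0,{\rm I},jk)$ with exactly one of $j,k$ in $\{i,i+1\}$; a case check then shows it lies in exactly three $2$-faces, namely the corresponding $(n-1)$-gon together with either two quadrilaterals of type $(2,{\rm III},i?)$ (the generic case) or one quadrilateral plus an adjacent triangle of type $(2,{\rm II},i)$ or $(2,{\rm II},i+1)$ (the four ``near-diagonal'' vertices $(0,{\rm I},(i-1)i)$, $(0,{\rm I},(i-1)(i+1))$, $(0,{\rm I},i(i+2))$, $(0,{\rm I},(i+1)(i+2))$, where the would-be quadrilateral $(2,{\rm III},i(i-1))$ or $(2,{\rm III},i(i+1))$ is forbidden by the index restriction $k \notin \{i-1,i,i+1\}$ and is replaced by the neighbouring triangle). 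As an independent global check, the total face--vertex incidence of the listed $2$-faces equals $2(n-1) + 2 \cdot 3 + 4(n-3) = 6n-8$, while $3$ times the number of vertices is $3(2n-3) = 6n-9$, so exactly one vertex carries the extra incidence, consistent with the single non-simple vertex $(0,{\rm I},i(i+1))$.

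The main potential obstacle is bookkeeping at the four near-diagonal vertices, where a triangle of type $(2,{\rm II})$ substitutes for a forbidden quadrilateral; once this substitution is recognised the enumeration is routine and produces exactly three faces in every non-special case, matching the Euler-type incidence count.
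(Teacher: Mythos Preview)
Your argument is correct and follows essentially the same approach as the paper: read off the facet--vertex incidences from Proposition~\ref{prop:boundary faces} and do the bookkeeping. Your global incidence count $6n-8$ versus $3(2n-3)=6n-9$ is a pleasant addition that the paper does not include. One small correction: the fact that $Z_jZ_k$ meets $Z_lZ_{l+1}$ only when $\{j,k\}\cap\{l,l+1\}\neq\emptyset$ follows directly from total positivity of $Z$ (the $4\times4$ minor $\langle j\,k\,l\,(l{+}1)\rangle$ is nonzero unless two indices coincide), not from Assumption~\ref{assum:general}; in the consecutive case $\{j,k\}=\{i,i+1\}$ the line $Z_iZ_{i+1}$ already meets only three cycle edges, so Assumption~\ref{assum:general} alone would still permit a fourth incidence.
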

\begin{proof}
 The vertex $(0,{\rm I},i(i+1))$ lies in the boundary facets of the three threefolds $(3,{\rm I},i-1)$, $(3,{\rm I},i)$ and $(3,{\rm I},i+1)$, while the vertices $(0,{\rm I},ij), j\notin\{i-1,i+1\}$ are contained in the boundary facets of the four threefolds $(3,{\rm I},i-1)$, $(3,{\rm I},i)$, $(3,{\rm I},j-1)$ and $(3,{\rm I},j)$.  Similarly, the boundary line segments are contained in at most three boundary polygons, and the boundary polygons are contained in at most two facets.

The quadric threefold $(3,{\rm I},i)$ is a cone with vertex $(0,{\rm I},i(i+1))$. The boundary facet in $(3,{\rm I},i)$ contains boundary polygons in the four planes $(2,{\rm I},i), (2,{\rm I},i+1),(2,{\rm II},i), (2,{\rm II},i+1)$, all passing through that vertex. The boundary quadrilaterals in the $n-3$ smooth quadric surfaces $(2,{\rm III},ij), j\notin \{i-1,i,i+1\}$ do not pass through $(0,{\rm I},i(i+1))$. Beside $(0,{\rm I},i(i+1))$, there are the $2(n-2)$ boundary vertices $(0,{\rm I},jk); j\in \{i,i+1\}, k\notin\{i,i+1\}$ on the $(3,{\rm I},i)$ facet. Each of these is the intersection of three boundary polygons, see Table \ref{tab:vertsoffacet}.
\end{proof}

\begin{table}[h!]
\centering 
\begin{tabular}{l|l}
vertex of $(3,{\rm I},i) \cap {\cal A}_n$ & is contained in the polygons of \\ \hline
$(0,{\rm I},ik), \, k \notin \{i-1,i,i+1,i+2\}$ & $(2,{\rm I},i), \, (2,{\rm III},ik), \, (2,{\rm III},i(k-1)) $ \\
$(0,{\rm I},(i+1)k), \, k \notin \{i-1,i,i+1,i+2\}$ & $(2,{\rm I},i+1), \, (2,{\rm III},ik), \, (2,{\rm III},i(k-1)) $ \\
$(0,{\rm I}, (i-1)i)$ & $(2,{\rm I},i), \, (2,{\rm II},i), \, (2,{\rm III},i(i-2)) $ \\
$(0,{\rm I}, i(i+2))$ & $(2,{\rm I},i), \, (2,{\rm II},i+1), \, (2,{\rm III},i(i+2)) $ \\
$(0,{\rm I}, (i-1)(i+1))$ & $(2,{\rm I},i+1), \, (2,{\rm II},i), \, (2,{\rm III},i(i-2)) $ \\
$(0,{\rm I}, (i+1)(i+2))$ & $(2,{\rm I},i+1), \, (2,{\rm II},i+1), \, (2,{\rm III},i(i+2)) $ \\
\end{tabular}
\caption{Vertex-facet incidence of the 3-dimensional boundary $(3,{\rm I},i) \cap {\cal A}_n$.}
\label{tab:vertsoffacet}
\end{table}

\begin{corollary}\label{cor:connected one-skeleton} The one-dimensional skeleton of the boundary, i.e., the union of $1$-dimensional faces, is connected.  
    \end{corollary}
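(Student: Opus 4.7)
The plan is to show that the graph $\Gamma$ whose vertices are the $0$-dimensional boundary faces and whose edges are the $1$-dimensional boundary faces of ${\cal A}_n$ is connected. By Proposition~\ref{prop:boundary faces}, $V(\Gamma)$ consists of the $\binom{n}{2}$ vertices $(0,{\rm I},ij)$, one for each unordered pair $\{i,j\} \subset [n]$, and $E(\Gamma)$ consists of the line segments inside strata of types $(1,{\rm I})$ and $(1,{\rm II})$. All the needed combinatorial data is already encoded in that proposition.

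The key step is to introduce, for each $i \in [n]$, the \emph{star} $S_i = \{(0,{\rm I},ij) : j \in [n] \setminus \{i\}\}$, and to show that $S_i$ is connected in $\Gamma$. For this I would use the edges of type $(1,{\rm II},ij)$: as recalled in Proposition~\ref{prop:boundary faces}, such an edge exists whenever $j \in [n]\setminus\{i-1,i\}$ and it connects $(0,{\rm I},ij)$ to $(0,{\rm I},i(j+1))$, where indices are read cyclically. Running $j$ through $i+1, i+2, \ldots, i-2$ in cyclic order therefore produces a path
\[
(0,{\rm I},i(i+1)) \,\text{---}\, (0,{\rm I},i(i+2)) \,\text{---}\, \cdots \,\text{---}\, (0,{\rm I},i(i-2)) \,\text{---}\, (0,{\rm I},i(i-1))
\]
in $\Gamma$, which visits every vertex of $S_i$. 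Hence $S_i$ is connected.

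Finally, for any two indices $i \neq j$ the stars $S_i$ and $S_j$ share the vertex $(0,{\rm I},ij)$, so $S_i \cup S_j$ is connected. Since $V(\Gamma) = \bigcup_{i\in[n]} S_i$, the full $1$-skeleton is connected. I do not expect any real obstacle here: the argument is purely combinatorial and uses nothing beyond the incidence information already established in Proposition~\ref{prop:boundary faces} (and the $(1,{\rm I})$ edges are not even needed, merely adding redundant connections $(0,{\rm I},(i-1)i) \,\text{---}\, (0,{\rm I},i(i+1))$ inside $S_i$).
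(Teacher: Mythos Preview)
Your proof is correct and follows essentially the same approach as the paper: both arguments rest on the incidence datum from Proposition~\ref{prop:boundary faces} that the edge $(1,{\rm II},ij)$ joins $(0,{\rm I},ij)$ to $(0,{\rm I},i(j+1))$, and deduce connectedness directly from it. Your organization via the stars $S_i$ is a bit more explicit than the paper's terse formulation (which also mentions the cyclic $n$-gon formed by the $(1,{\rm I})$ edges, though, as you observe, those are redundant for connectedness).
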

    \begin{proof}The boundary line segments in lines of type $(1,{\rm I},i)$ form a cyclic $n$-gon with vertices at the points $(0,{\rm I},i(i+1))$. The boundary line segment in the line $(1,{\rm II},ij)$ connects the boundary vertices  $(0,{\rm I},ij)$ and $(0,{\rm I},i(j+1))$.  When $i$ and $j$ varies any two boundary vertices are connected via a finite number of boundary line segments. 
    \end{proof}

    Having desribed the face structure of the boundary of the amplituhedron, we now turn to the strata in the algebraic boundary that do not intersect the amplituhedron.
    These form the residual arrangement ${\cal R}({\cal A}_n)$ of our amplituhedron ${\cal A}_n$. 
\begin{definition} \label{def:resarr}
The \emph{residual arrangement} ${\cal R}({\cal A}_n)$ of the amplituhedron ${\cal A}_n$ is the union of all strata in the algebraic boundary that do not intersect the amplituhedron.
\end{definition}
Theorem \ref{thm:residualstrata} says that the residual arrangement is the union of the open strata 
$$(1,{\rm III}), (1,{\rm IV}),(0,{\rm II}), (0,{\rm III}),(0,{\rm IV}), (0,{\rm V}), (0,{\rm VI}).$$
This equals the union of the closed strata of type $(1,{\rm III})$ and $(1, {\rm IV})$.
The strata of type $(1,{\rm III})$ are the {\em residual lines} in ${\cal R}({\cal A}_n)$, while
the strata of type $(1,{\rm IV})$ are the {\em residual conics} in ${\cal R}(\cA_n)$.  We call the $0$-dimensional strata in ${\cal R}(\cA_n)$ the  {\em residual vertices} of ${\cal R}(\cA_n)$.

\section{Unique adjoint threefold} \label{sec:5}
In this section, we discuss the analogon to the adjoint hypersurface of a simple polytope. In the context of Wachspress coordinates and polypols, similar generalizations have appeared in the literature, see \cite{kohn2021adjoints,kohn2020projective,lam2022invitation}.
Consider a simple convex polytope $P \subset \PP^d$ of dimension $d$ with $n$ facets in hyperplanes $D_1, \ldots, D_n$. In Wachspress geometry, one defines a hypersurface given by $\alpha_P(x) = 0$, called the \emph{adjoint hypersurface}, by requiring it to be of degree $n-d-1$, and to interpolate a \emph{residual arrangement} ${\cal R}(P)$ \cite{kohn2020projective}. The latter is the union of all linear spaces that are intersections of a subset of the hyperplanes $D_1, \ldots, D_n$, but do not contain any face of $P$. 
An example of a polygon is shown in Figure \ref{fig:adjpentagon}. The adjoint curve of a pentagon is the unique conic passing through the five blue points in its residual arrangement. 
In \cite{kohn2021adjoints}, this was generalized from polytopes to \emph{rational polypols}. Among those are rational polypols representing semi-algebraic subsets of the plane with rational boundary curves. Definition \ref{def:resarr} of the residual arrangement is in direct analogy with these examples. 
Our interest in adjoint curves of polypols stems from the fact that their defining equation is the numerator of the 
\emph{canonical form} of the polypol as a positive geometry \cite[Theorem 2.15]{kohn2021adjoints}. 
We generalize this to amplituhedra, and use the adjoint to show that these are positive geometries in~Section~\ref{sec:6}.

Let $R = \C[p_{12},p_{13},p_{14},p_{23},p_{24},p_{34}]/ \langle p_{12}p_{34}-p_{13}p_{24}+p_{14}p_{23} \rangle$ be the homogeneous coordinate ring of ${\rm Gr}_\C(2,4)$. Its degree $d$ part is denoted  by $R_d$. 
\begin{definition}
A form $\alpha_{{\cal A}_n} \in R_{n-4}$ is an \emph{adjoint polynomial} for ${\cal A}_n$ if $\alpha_{\A_n}(Y) = 0$ for all $Y \in {\cal R}(\cA_n)$. The zero locus $\{Y \in {\rm Gr}_\C(2,4) ~|~ \alpha_{{\cal A}_n}(Y) = 0 \}$ is called an \emph{adjoint three-fold}.
\end{definition}

In this section, we prove the following statement.
\begin{theorem}\label{adjoint}
    Under Assumption~\ref{assum:general}, there exists a unique, up to scaling, adjoint polynomial $\alpha_{\cA_n}$ for $\cA_n$ in the homogeneous coordinate ring $R$ of the Grassmannian ${\rm Gr}_{\mathbb{C}}(2,4)$.
\end{theorem}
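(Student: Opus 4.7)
The plan is to treat existence and uniqueness separately: existence via a dimension count on the ideal of the residual arrangement in $R$, and uniqueness via induction on $n$ using restriction to the boundary divisors.

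For existence, the Hilbert series $(1+t)/(1-t)^5$ of the quadric Pl\"ucker hypersurface gives $\dim R_{n-4} = \binom{n}{4} + \binom{n-1}{4}$. Each residual line of type $(1,{\rm III})$ imposes at most $n-3$ linear conditions on $R_{n-4}$, and each residual conic of type $(1,{\rm IV})$ imposes at most $2n-7$ such conditions. Using the stratum counts of Theorem \ref{thm:stratanew} together with the incidence data of Table \ref{tab:vertices in curves}, I would show that the many coincidences in $\mathcal{R}(\mathcal{A}_n)$---particularly the residual vertices of types $(0,{\rm II})$ through $(0,{\rm VI})$ where several residual curves meet, as well as the points where each residual conic crosses a residual line---reduce the effective rank of the interpolation conditions to at most $\dim R_{n-4}-1$, yielding a nonzero $\alpha_{\mathcal{A}_n}$. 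A constructive alternative is to assemble $\alpha_{\mathcal{A}_n}$ as the numerator of the BCFW-style expression for $\Omega(\mathcal{A}_n)$ coming from a triangulation of $\mathcal{A}_n$ by positroid cells, where by the very construction of such triangulations, the numerator vanishes on $\mathcal{R}(\mathcal{A}_n)$.

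For uniqueness, I would use induction on $n$. The base case $n=4$ is trivial since $\dim R_0 = 1$. In the inductive step, suppose $\alpha, \alpha' \in R_{n-4}$ are both adjoints and restrict them to the boundary divisor $B_1 = \{\langle AB 12 \rangle = 0\} \subset {\rm Gr}_\mathbb{C}(2,4)$, a quadric threefold in a $\mathbb{P}^4$. By Proposition \ref{prop:boundary faces}, $B_1 \cap \mathcal{A}_n$ is a three-dimensional polytopal region with $n+1$ facets---two $(n-1)$-gons, two triangles, and $n-3$ quadrilaterals---whose Wachspress-type residual arrangement is realized by the components of $\mathcal{R}(\mathcal{A}_n)$ lying in $B_1$. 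An adjoint-uniqueness statement inside the quadric threefold $B_1$ would give $\alpha|_{B_1} = c \cdot \alpha'|_{B_1}$ for some $c \in \mathbb{C}$. After rescaling, $\alpha - \alpha' = \langle AB 12 \rangle \cdot \gamma$ for some $\gamma \in R_{n-5}$, and $\gamma$ still vanishes on every component of $\mathcal{R}(\mathcal{A}_n)$ not contained in $B_1$. Iterating this reduction across the boundary divisors $B_2, \ldots, B_n$ (using that ${\cal R}(\cA_n)$ spreads across all of them) forces $\gamma$ to be divisible by too many brackets $\langle AB i(i+1)\rangle$ relative to its degree $n-5$, so $\gamma = 0$.

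The main obstacle will be carrying out the inductive step rigorously: proving that the restriction of the adjoint problem on $\mathcal{A}_n$ to a boundary divisor $B_i$ is genuinely an adjoint interpolation problem to which induction applies. One has to match the 14 residual stratum types from Table \ref{tab:complexstrata} intersected with $B_i$ against the residual arrangement of the 3-dimensional boundary polytope $B_i \cap \mathcal{A}_n$ inside the quadric threefold $B_i$---a setting not literally covered by the projective Wachspress theory of \cite{kohn2020projective}, so an adapted quadric-ambient version of that theory must be set up in parallel. A secondary subtlety is that the residual arrangement contains the conics of type $(1,{\rm IV})$: these are nonlinear rational curves in ${\rm Gr}(2,4)$ whose interpolation conditions are less transparent than those of hyperplane-cut residual components in the polytope case, and one must track how each conic restricts to the various boundary divisors, using Assumption \ref{assum:general} to ensure that the expected independence of conditions actually holds.
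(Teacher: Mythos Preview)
Your existence argument is essentially the paper's: Proposition~\ref{prop:existence} does exactly the count you sketch, showing each residual line carries $n-2$ residual vertices and each residual conic $2n-6$, so that one condition on every residual curve is redundant and the number of independent conditions drops by $\#(1,{\rm III})+\#(1,{\rm IV})=\tfrac{1}{6}n(n+1)(n-4)$ to at most $\dim_{\mathbb C}R_{n-4}-1$.

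For uniqueness the paper takes a different, more direct route. It assumes a pencil of adjoints and climbs the boundary stratification from dimension two upward. In each $(2,{\rm II})$-plane the $n-4$ residual lines already pin the restriction of any adjoint down to a single linear factor, so one extra point---supplied by the pencil---forces some member $f$ to vanish on that plane; since consecutive $(2,{\rm II})$-planes meet at the point $[Z_iZ_{i+1}]\notin\mathcal R(\mathcal A_n)$, this propagates around the cycle of all $(2,{\rm II})$-planes (Lemma~\ref{lem:planes}). Planar Wachspress uniqueness then forces $f$ to vanish on every $(2,{\rm I})$-plane, and an overdetermined count pushes this to the $(2,{\rm III})$-quadrics (Lemma~\ref{lem:quadric surfaces}) and finally to every $(3,{\rm I})$-threefold, so $f=0$ in $R$. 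No induction on $n$ and no auxiliary Wachspress theory on quadric threefolds is needed; the restricted uniqueness on each $(3,{\rm I},i)$ (your proposed inductive input) is instead obtained \emph{a posteriori} as Corollary~\ref{cor:restricted adjoint}.

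Your inductive scheme has a gap beyond the obstacle you flag. Even granting uniqueness of the degree-$(n-4)$ interpolant on each $B_i$, the peeling step does not iterate. After the first step you have $\gamma\in R_{n-5}$ vanishing on $\mathcal R(\mathcal A_n)\setminus B_1$; to continue you need $\gamma|_{B_2}=0$. That is not a uniqueness-up-to-scalar statement but a vanishing statement: a form of degree $n-5$ on $B_2$ that interpolates only $\bigl(\mathcal R(\mathcal A_n)\setminus B_1\bigr)\cap B_2$ must be identically zero. This is strictly stronger than, and not implied by, your inductive hypothesis, which concerns degree $n-4$ and the full restricted residual arrangement on $B_i$. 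With every peeling both the degree and the vanishing locus drift, so there is no fixed assertion to induct on. Note also that $B_1\cap\mathcal A_n$ is not an $(n-1)$-point amplituhedron but the $3$-polytope of Proposition~\ref{prop:boundary faces} inside a quadric cone, so ``induction on $n$'' is a misnomer; what you would really need is an independent adjoint-uniqueness theorem for such regions in singular quadric threefolds, which is precisely what the paper's propagation argument circumvents.
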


Below, we set ${\cal A}_n = {\cal A}_n(Z)$ for a fixed totally positive matrix $Z$ satisfying Assumption \ref{assum:general}. To determine $\alpha_{{\cal A}_n}$ uniquely, we need $\dim_\C R_{n-4} - 1$ independent conditions. We have 
\begin{equation} \label{eq:formuladimR}
    \dim_\C R_{n-4} \, = \, \binom{n+1}{5} - \binom{n-1}{5}. 
\end{equation} 
We will obtain such conditions as interpolation conditions for distinguished lines in the residual arrangement ${\cal R}({\cal A}_n)$. Namely, the adjoint threefold is required to interpolate the lines of type $(1,{\rm III})$ and $(1,{\rm IV})$, see Theorem \ref{thm:residualstrata}. 
First, we work out the cases $n = 4, 5$.

\begin{example}
For $n = 4$, the algebraic boundary $\partial_a{\cal A}_n$ contains 
 no strata of types $(1,{\rm III})$ and $(1,{\rm IV})$ and only vertices of type $(0,{\rm I})$ (Theorem \ref{thm:stratanew}),  hence ${\cal R}({\cal A}_4) = \emptyset$ and $\alpha_{{\cal A}_4} = 1$.
\end{example} 

\begin{example}
The algebraic boundary of ${\cal A}_5(Z)$ contains five strata of type $(1,{\rm III})$ and no strata of type $(1,{\rm IV})$, so the residual arrangement ${\cal R}_5(Z)$ consists of the five residual lines of type $(1,{\rm III})$.
They form a cycle with vertices at the five residual vertices of type $(0,{\rm IV})$. This cycle also contains five residual vertices of type $(0,{\rm III})$ one on each residual line.
The cycle uniquely determines a linear form in $R_1$, which is the adjoint polynomial $\alpha_{{\cal A}_5}$ of ${\cal A}_5(Z)$. 

More explicitly, the adjoint $\alpha_{{\cal A}_5}$ interpolates the five residual vertices 
\[
(0,{\rm IV},124),(0,{\rm IV},235),(0,{\rm IV},341),(0,{\rm IV},452),(0,{\rm IV},513).
\]
Since each residual line contains two of these vertices, the resulting linear form will automatically vanish on ${\cal R}_5(Z)$. The Pl\"ucker coordinates of the vertex $(0,{\rm IV},124)$ represent the corresponding interpolation condition for the linear form $\alpha_{{\cal A}_5}$. To compute these Pl\"ucker coordinates, note that they satisfy the linear relations  $\langle AB12 \rangle \, =\langle AB13 \rangle \, =\langle AB14 \rangle \, =\langle AB15 \rangle \, =\langle AB23 \rangle \, =\langle AB45 \rangle \, =0$.  The first four forms are linearly dependent, but any three of them and the last two are independent, so these linear forms have a unique solution. The interpolation conditions for the other four vertices are obtained in a similar fashion. 

We give a useful recipe for computing $\alpha_{{\cal A}_5}$. For $\{i,j\} \subset [5]$, let $c_{ij} = \prod_{k \in [5] \setminus \{i,j\}} \langle Z_{\hat{k}} \rangle$, where $\langle Z_{\hat{k}} \rangle$ denotes the $4 \times 4$ minor of $Z$ obtained by deleting the $k$-th row.
The adjoint polynomial $\alpha_{{\cal A}_5}$ is the linear form $d_{12} p_{12} + d_{13} p_{13} + d_{14} p_{14} + d_{23} p_{23} + d_{24} p_{24} + d_{34} p_{34}$ whose coefficients $d$ are such that $\big ( \bigwedge^2 Z \big )  \cdot d = c$. Here $\bigwedge^2 Z$ is the $10 \times 6$ matrix whose $(ij,kl)$ entry is the $2 \times 2$ minor of $Z$ with rows $i,j$ and columns $k,l$, and the $ij$-entry of $c$ is $c_{ij}$.
\end{example}

\begin{proposition} \label{prop:countres}
The number of residual vertices in ${\cal R}({\cal A}_n)$ is given by 
\begin{equation} \label{eq:countres}   \frac{1}{12}n^{4}-\frac{1}{2}n^{3}+\frac{17}{12}n^{2}-3n . 
\end{equation}
\end{proposition}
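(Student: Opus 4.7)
The plan is to sum the number of zero-dimensional residual strata type by type, using Theorem~\ref{thm:residualstrata} to identify which types contribute and Table~\ref{tab:complexstrata} for the counts. By Theorem~\ref{thm:residualstrata}, the residual vertices are precisely the zero-dimensional strata of types $(0,{\rm II})$, $(0,{\rm III})$, $(0,{\rm IV})$, $(0,{\rm V})$ and $(0,{\rm VI})$. For the first four of these, each combinatorial index in the table labels a single reduced point, since the corresponding Schubert intersection has degree one (an intersection of the two planes $V_i, V_j$ for $(0,{\rm I})$-style triples, a plane with two line conditions for $(0,{\rm III})$ and $(0,{\rm IV})$, etc.). So the direct counts $\tfrac{n(n-5)}{2}$, $n(n-4)$, $n\binom{n-3}{2}$, $n\binom{n-5}{2}$ from Table~\ref{tab:complexstrata} give the actual numbers of residual vertices of those four types.

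The one delicate contribution is $(0,{\rm VI})$. Each index $(ijkl)$ labels $L_{i(i+1)}L_{j(j+1)}L_{k(k+1)}L_{l(l+1)}$, the set of lines in $\PP^3$ meeting four pairwise skew lines $Z_iZ_{i+1}, Z_jZ_{j+1}, Z_kZ_{k+1}, Z_lZ_{l+1}$. This is a zero-dimensional subscheme of ${\rm Gr}(2,4)$ of degree two, as recorded in the degree column of Table~\ref{tab:complexstrata}; classically, there are two lines through four lines in general position in $\PP^3$. Assumption~\ref{assum:general} ensures that these four lines are in sufficiently general position for the scheme to split into two distinct reduced points. Hence each of the $\tfrac{n(n-5)(n-6)(n-7)}{24}$ indices of type $(0,{\rm VI})$ contributes \emph{two} residual vertices, for a total of $\tfrac{n(n-5)(n-6)(n-7)}{12}$.

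It then remains to verify the polynomial identity
\[
\frac{n(n-5)}{2}+n(n-4)+\frac{n(n-3)(n-4)}{2}+\frac{n(n-5)(n-6)}{2}+\frac{n(n-5)(n-6)(n-7)}{12} \,=\, \frac{n^4}{12}-\frac{n^3}{2}+\frac{17n^2}{12}-3n,
\]
which is routine: bring the left-hand side to the common denominator $12$, expand each product, and collect the coefficients of $n^4, n^3, n^2, n$. The expected main obstacle is not the algebra but the multiplicity for type $(0,{\rm VI})$: without the factor of two coming from the classical count of two lines meeting four general lines in $\PP^3$, the identity fails (one obtains $\tfrac{n^4+6n^3-73n^2+138n}{24}$ instead). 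Everything else is combinatorial bookkeeping built on top of Theorems~\ref{thm:stratanew} and~\ref{thm:residualstrata}.
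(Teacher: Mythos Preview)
Your proposal is correct and follows essentially the same route as the paper: identify the residual zero-dimensional types via Theorem~\ref{thm:residualstrata}, read off the counts from Theorem~\ref{thm:stratanew}/Table~\ref{tab:complexstrata}, double the contribution of type $(0,{\rm VI})$ because each such stratum has degree two, and sum. Your added remarks on why the degree-one types contribute a single point and why Assumption~\ref{assum:general} guarantees two distinct reduced points for $(0,{\rm VI})$ are helpful elaborations, but the overall argument is the same as the paper's.
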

\begin{proof}
By Theorem \ref{thm:residualstrata}, the correct number is obtained as 
\[ \#(0, {\rm II})+ \# (0,{\rm III})+ \# (0, {\rm IV})+ 
\# (0, {\rm V})+ 2\cdot \# (0, {\rm VI})
, \]
where $\#(\cdot)$ denotes the number of substrata of a given type. Note that the 2 in front of $\# (0,{\rm VI})$ stems from the fact that each stratum of type $(0,{\rm VI})$ consists of two points outside ${\cal A}_n$. 
Plugging in the expressions from Theorem \ref{thm:stratanew} gives \eqref{eq:countres}.
\end{proof}

Using Proposition \ref{prop:countres} and \eqref{eq:formuladimR}, we get for all integers $n\geq 4$ 
\[ \frac{1}{12}n^{4}-\frac{1}{2}n^{3}+\frac{17}{12}n^{2}-3n - (\dim_\C R_{n-4} - 1) \, = \,  \frac{1}{6} n (n+1) (n-4) \geq 0.\]
This also shows that, for $n \geq 5$, we have more interpolation conditions than degrees of freedom.  Notice that the number of seemingly superfluous interpolation conditions coincides with the number of dimension 1 substrata in ${\cal R}({\cal A}_n)$ of type $(1,{\rm III})$ and $(1,{\rm IV})$.  In fact,
\[ \#(1, {\rm III})+ \# (1,{\rm IV})=n(n-4)+\binom{n}{3}-n(n-3)=\frac{1}{6} n (n+1) (n-4).
\]

The following proposition is originally proven by Arkani-Hamed, Hodges and Trnka.
\begin{proposition} \label{prop:existence}\cite[Section 2.2] {arkani2015positive}
Each  residual line contains $n-2$ residual vertices, and each residual conic contains $2n-6$ residual vertices.
In particular, ${\cal R}({\cal A}_n)$ imposes at most 
$(\dim_\C R_{n-4} - 1) $ linearly independent interpolation conditions on forms in $R_{n-4}$, so there is at least one nonzero polynomial $f\in R_{n-4}$ that vanishes on ${\cal R}({\cal A}_n)$.
\end{proposition}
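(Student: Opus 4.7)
The plan is to count residual vertices by direct geometric enumeration on each residual line or conic, and then apply a dimension estimate for the restriction of $R_{n-4}$ to these low-degree curves in $\PP^5$.

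For a residual line of type $(1,{\rm III},ij)$, realized as the pencil of lines in the plane $\pi = Z_{i-1}Z_iZ_{i+1}$ through the point $q = \pi \cap Z_jZ_{j+1}$, each element of the pencil is determined by its second distinguished intersection with the configuration in $\pi$. The distinguished points are the three triangle vertices $Z_{i-1},Z_i,Z_{i+1}$, giving one residual vertex of type $(0,{\rm III})$ and two of type $(0,{\rm IV})$; and the $n-5$ auxiliary points $\pi \cap Z_kZ_{k+1}$ for $k \in [n] \setminus \{i-2,i-1,i,i+1,j\}$, giving residual vertices of type $(0,{\rm V})$, or of type $(0,{\rm II})$ when $k \in \{j-1,j+1\}$. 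This totals $n-2$ residual vertices per residual line.

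For a residual conic of type $(1,{\rm IV},ijk)$, the rational curve of lines meeting the three skew lines $L_{i(i+1)},L_{j(j+1)},L_{k(k+1)}$, I would enumerate by Schubert type. Each of the six points $Z_a$ with $a \in \{i,i+1,j,j+1,k,k+1\}$ lies on one of the skew lines and determines a unique line through $Z_a$ meeting the other two, producing six residual vertices of type $(0,{\rm IV})$. For each $e \in \{i,j,k\}$ and each $a \in \{e,e+1\}$, the plane $\pi_a$ contains $L_{e(e+1)}$, and a unique line in $\pi_a$ meets the remaining two skew lines, producing residual vertices of type $(0,{\rm V})$. Finally, for each $l \in [n]$ outside the (at most nine) indices $\{i-1,i,i+1,j-1,j,j+1,k-1,k,k+1\}$, the two transversals to the four skew lines $L_{i(i+1)},L_{j(j+1)},L_{k(k+1)},L_{l(l+1)}$ contribute vertices of type $(0,{\rm VI})$. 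Careful accounting yields the uniform count $2n-6$.

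For the final deduction, the restriction of $R_{n-4}$ to a line $\ell \subset \PP^5$ has dimension $n-3$, and its restriction to a degree-$2$ conic has dimension $2n-7$. Hence the $n-2$ vertex conditions on a residual line admit at least one linear dependency, as do the $2n-6$ vertex conditions on a residual conic. Summing over all residual curves, the vertex conditions impose at most
\[
v - \#(1,{\rm III}) - \#(1,{\rm IV}) \;=\; v - \tfrac{1}{6}n(n+1)(n-4) \;=\; \dim_\C R_{n-4} - 1
\]
linearly independent conditions on $R_{n-4}$, where $v$ is the count from Proposition \ref{prop:countres} and the second equality is the identity displayed just before the proposition. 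This produces a nonzero $f \in R_{n-4}$ vanishing on ${\cal R}({\cal A}_n)$. The main obstacle is the conic enumeration, where the value $2n-6$ must be verified to be independent of the combinatorial type of $(i,j,k)$: small cyclic gaps cause certain $(0,{\rm V})$ strata to collapse onto $(0,{\rm IV})$ strata and shrink the valid range for $l$ in the $(0,{\rm VI})$ contribution, and one must check that these effects cancel exactly.
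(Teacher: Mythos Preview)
Your proposal is correct and follows the same overall strategy as the paper: count residual vertices on each residual curve, then observe that at most $n-3$ (resp.\ $2n-7$) conditions on a line (resp.\ conic) can be independent in $R_{n-4}$, yielding one superfluous condition per curve and the displayed arithmetic.

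The one noteworthy difference is in how the vertex counts are obtained. You enumerate by Schubert type $(0,{\rm II})$--$(0,{\rm VI})$ separately, which forces the case analysis you flag as the ``main obstacle'': the split into $(0,{\rm IV})$, $(0,{\rm V})$, $(0,{\rm VI})$ vertices on a conic genuinely depends on the cyclic gaps among $i,j,k$, and you must check that the contributions rebalance to $2n-6$ in each of the three combinatorial kinds. The paper sidesteps this entirely by counting once per \emph{unmarked} line $Z_kZ_{k+1}$ (i.e.\ each of the $n-3$ edges not already appearing in the Schubert description of the curve): every such edge contributes exactly one residual vertex to a $(1,{\rm III})$ line and exactly two to a $(1,{\rm IV})$ conic, uniformly in the combinatorial type, and the $(1,{\rm III})$ line picks up one extra $(0,{\rm III})$ vertex. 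This yields $n-2$ and $2(n-3)=2n-6$ immediately, with the type-by-type breakdown relegated to a remark. Your route works but is longer; the paper's uniform count is what eliminates the obstacle you identified.
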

\begin{proof}

Each residual line and conic in ${\cal R}({\cal A}_n)$ comes with marked lines $Z_iZ_{i+1}$, in either red or green in Figure~\ref{fig:boundarystrata}. The lines of type $(1,{\rm III})$ contain one residual vertex for each of the $n-3$ unmarked lines $Z_jZ_{j+1}$ and in addition one vertex of type $(0,{\rm III})$. Similarly, each conic of type $(1, {\rm IV})$ contain two vertices per unmarked line. We describe these vertices for each of the above kinds in Remark \ref{vertextypes}.
 
This proves the first part of the proposition. For the second part, notice that $n-3$ points on a line and $2n-7$ points on a conic impose independent conditions on forms of degree $n-4$. So at least one condition on each line and one condition on each conic is superfluous.
\end{proof}

\begin{remark}\label{vertextypes}
We may distinguish different kinds of lines/conics in ${\cal R}({\cal A}_n)$ to describe more precisely which boundary vertices lie on each curve. We say that the line $(1,{\rm III},ij)$ is 
\begin{itemize}
    \item of the \emph{first kind} if the distance in the cyclic order between the plane $Z_{i-1}Z_iZ_{i+1}$ and the marked line $Z_jZ_{j+1}$ is one,
    \item and of the \emph{second kind} if this distance is greater than one.
\end{itemize}  
We say that a $(1,{\rm IV},ijk)$-conic  
\begin{itemize}
    \item is of the \emph{first kind} if one of the marked lines has distance one to the two others, 
    \item it is of the \emph{second kind} if two marked lines have distance one, while the third has distance more than one to the first two, 
    \item and it is of the \emph{third kind} if no two of the three have distance one.
\end{itemize}
A residual line of the first kind, say $(1,{\rm III},i(i+2))$,   contains the vertices $(0,{\rm III},i(i+2))$, $(0,{\rm IV},(i-1)i(i+2))$, $(0,{\rm IV},(i+1)(i-1)(i+2))$ and $(0,{\rm II},i(i+3))$  and $n-6$ vertices of type $(0,{\rm V})$.  A residual line $(1,{\rm III},ij)$ of the second kind contains the vertices $(0,{\rm III,ij})$, $(0,{\rm II},ij)$, $(0,{\rm II},i(j+1))$,$(0,{\rm IV},(i-1)ij)$, $(0,{\rm IV},(i+1)(i-1)(j+1))$ and $n-7$ of type $(0,{\rm V})$.

A type-$(3, \rm II)$ conic of the first kind
 contains six vertices of type $(0,{\rm IV})$,
  two of type $(0,{\rm V})$, and $2(n-7)$ of type $(0,{\rm VI})$.  A residual conic of the second kind 
 contains six vertices of type $(0,{\rm IV})$, four of type $(0,{\rm V})$, and $2(n-8)$ of type $(0,{\rm VI})$. A conic of the third kind 
 contains 
 six vertices of type $(0,{\rm IV})$, six of type $(0,{\rm V})$, and $2(n-9)$ of type $(0,{\rm VI})$.
\end{remark}

To show that there is at most one nonzero polynomial $f\in R_{n-4}$ that vanishes on ${\cal R}({\cal A}_n)$, which will then imply uniqueness of $\alpha_{{\cal A}_n}\in R_{n-4}$ (up to scaling), we argue by contradiction and assume that there is at least a pencil of such polynomials.

\begin{lemma} \label{lem:planes}
If the adjoint is not unique, then there is an $f\in R_{n-4}$ that vanishes on ${\cal R}({\cal A}_n)$ and on all the planes $(2,{\rm I},i)$ and $(2,{\rm II},i)$ in $\partial_a {\cal A}_n$. 
\end{lemma}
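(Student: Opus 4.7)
Suppose the space $V \subset R_{n-4}$ of adjoint polynomials has $\dim V \geq 2$. The plan is to associate to each plane of types $(2,{\rm II},i)$ and $(2,{\rm I},j)$ a linear functional on $V$ measuring the restriction of an adjoint to that plane, and then show that all $2n$ functionals are proportional to a single $\lambda : V \to \C$; any nonzero element of $\ker\lambda$ will be the required $f$. For each $(2,{\rm II},i) \cong \PP^2$, the plane contains the $n-4$ residual lines $(1,{\rm III},ij)$, so an adjoint must restrict to a scalar multiple $\lambda_i(f)\cdot L_i$ of the product $L_i$ of their linear equations. For each $(2,{\rm I},j) \cong \PP^2$ no residual curve lies in the plane, but by Theorem~\ref{thm:stratanew} there are $n-4$ residual vertices of type $(0,{\rm III},j\cdot)$ and $\tfrac{(n-3)(n-4)}{2}$ of type $(0,{\rm IV},j\cdot\cdot)$, totalling $\tfrac{(n-4)(n-1)}{2}$ points. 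Since this is exactly one less than $\dim H^0(\PP^2,\mathcal O(n-4)) = \tfrac{(n-2)(n-3)}{2}$, generically these residual vertices cut out a one-dimensional subspace of degree $(n-4)$ forms, spanned by some $M_j$, and one obtains a functional $\lambda'_j$ with $f|_{(2,{\rm I},j)} = \lambda'_j(f)\cdot M_j$.

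Next I would chain the functionals via the plane intersections. For each $i$ and each $j \in \{i-1,i,i+1\}$, the intersection $\PP^1 = (2,{\rm II},i) \cap (2,{\rm I},j)$ is the pencil of lines through $Z_j$ lying in the plane $Z_{i-1}Z_iZ_{i+1}$. The two expressions for $f|_{\PP^1}$ must agree, giving
\begin{equation}
\lambda_i(f) \cdot L_i|_{\PP^1} \, = \, \lambda'_j(f) \cdot M_j|_{\PP^1}.
\end{equation}
A direct identification shows that the $n-4$ residual vertices lying on $\PP^1$ (all of type $(0,{\rm IV})$, involving a Schubert condition like $L_{(i-1)i}$) are simultaneously the zeros of $L_i|_{\PP^1}$, as intersections of the residual lines in $(2,{\rm II},i)$ with $\PP^1$, and of $M_j|_{\PP^1}$, as those residual vertices of $(2,{\rm I},j)$ that lie in $\PP^1$. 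Hence the two degree $(n-4)$ forms $L_i|_{\PP^1}$ and $M_j|_{\PP^1}$ on $\PP^1$ have the same zero set, are proportional, and force $\lambda_i = c_{ij}\, \lambda'_j$ as linear functionals on $V$ for a fixed nonzero constant $c_{ij}$. Cycling through these relations over $i \in [n]$ reduces every $\lambda_i, \lambda'_j$ to a scalar multiple of a single functional $\lambda : V \to \C$; since $\dim V \geq 2$, $\ker \lambda$ is nonzero, and any nonzero element of $\ker \lambda$ is an adjoint vanishing on every plane $(2,{\rm I},j)$ and $(2,{\rm II},i)$.

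The main obstacle is the genericity input: one needs the residual vertices in $(2,{\rm I},j)$ to impose $\tfrac{(n-4)(n-1)}{2}$ independent conditions on $H^0(\PP^2,\mathcal O(n-4))$, and for each $\PP^1 = (2,{\rm II},i) \cap (2,{\rm I},j)$ one needs $L_i|_{\PP^1}$ and $M_j|_{\PP^1}$ to be nonzero with exactly the claimed common zero set. Both facts should follow from Assumption~\ref{assum:general}, possibly upgraded to a Zariski-open genericity condition on $Z$, and the small-$n$ cases (notably $n = 5, 6$, where several residual strata collapse) would need to be verified separately.
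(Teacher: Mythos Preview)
Your approach is genuinely different from the paper's and is sound in outline, but the gap you flag is more substantial than a ``genericity'' issue and deserves a real argument.

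\textbf{Comparison.} The paper does not treat the two types of planes symmetrically. It first handles the planes of type $(2,{\rm II})$ alone: each contains $n-4$ residual lines, so an adjoint restricted to $(2,{\rm II},i)$ is a scalar times a fixed product; adjacent planes $(2,{\rm II},i)$ and $(2,{\rm II},i+1)$ meet only at the \emph{single point} $Z_iZ_{i+1}$, which lies on none of the residual lines in the next plane, so vanishing propagates around the cycle. Only then does the paper turn to $(2,{\rm I},i)$: the chosen $f$ already vanishes on the three lines $(2,{\rm II},i-1)\cap(2,{\rm I},i)$, $(2,{\rm II},i)\cap(2,{\rm I},i)$, $(2,{\rm II},i+1)\cap(2,{\rm I},i)$, so $f|_{(2,{\rm I},i)}$ factors as three linear forms times a degree $n-7$ form $g$; the remaining constraints are identified with the residual points of an auxiliary $(n-4)$-gon (obtained by projecting the $n-4$ skew lines $Z_jZ_{j+1}$ from $Z_i$), plus one extra point $p_i$ not on that polygon's adjoint; uniqueness of the planar adjoint of degree $n-7$ forces $g\equiv 0$. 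Your uniform ``functional'' framework is attractive, but the paper's sequential route sidesteps the need to know anything about $M_j$ a priori.

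\textbf{The gap.} Your claim that the $\tfrac{(n-4)(n-1)}{2}$ residual vertices in $(2,{\rm I},j)$ impose independent conditions on degree $n-4$ curves is not a mere Zariski-open genericity fact to be assumed: these points are highly special. What saves you is that, by Proposition~\ref{cor:restricted residuals}, the residual vertices in $(2,{\rm I},j)$ are precisely the residual arrangement of the convex $(n-1)$-gon $(2,{\rm I},j)\cap{\cal A}_n$, and Wachspress uniqueness for polygon adjoints (e.g.\ \cite[Proposition~2.2]{kohn2021adjoints}) gives exactly that these points cut out a one-dimensional space --- your $M_j$ is the polygon adjoint. You should invoke this explicitly rather than defer it to Assumption~\ref{assum:general}. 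Once $M_j$ is identified with the polygon adjoint, your second worry also dissolves: the line $\PP^1=(2,{\rm II},i)\cap(2,{\rm I},j)$ is one of the \emph{edges} of the $(n-1)$-gon (it is $(1,{\rm I},j)$ when $j=i$ and of type $(1,{\rm II})$ otherwise), and the polygon adjoint never vanishes identically on an edge; its $n-4$ zeros on that edge are exactly the residual points $(0,{\rm III},\cdot)$ or $(0,{\rm IV},\cdot)$ you want (your parenthetical ``all of type $(0,{\rm IV})$'' is inaccurate for $j=i$, where they are of type $(0,{\rm III})$). With these identifications made precise your argument goes through.
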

\begin{proof}
    Each plane $(2, \rm II,i)$ contains $n-4$ lines in ${\cal R}({\cal A}_n)$, one for each line $Z_jZ_{j+1}$ that meets the plane $Z_{i-1}Z_{i}Z_{i+1}$ outside the three lines $Z_{i-1}Z_{i},Z_{i}Z_{i+1},Z_{i-1}Z_{i+1}$. So if the adjoint is not unique, then, for each $i$, there is an adjoint $f$ that vanishes on the plane $(2, {\rm II},i)$. Indeed, the hyperplane in $R_{n-4}$ consisting of $(n-4)$-forms vanishing on an extra point in $(2,{\rm II},i)$ intersects any pencil of adjoints, and the intersection is $f$. Furthermore, two planes $(2, {\rm II},i)$ and $(2, {\rm II},i+1)$ intersect in a point $Z_iZ_{i+1}$.  This point represents the line $Z_iZ_{i+1}$ which does not intersect any of the lines $Z_jZ_{j+1}, j\not= \{i-1,i,i+1,i+2\}$ (by the generality assumption).  Therefore the point $[Z_iZ_{i+1}]$ does not lie on any of the $n-4$ residual lines in $(2, {\rm II},i+1)$, and so an adjoint $f$ that vanishes on $(2, {\rm II},i)$ must also vanish on $(2, {\rm II},i+1)$.  Inductively, this $f$ vanishes on all the planes $(2, {\rm II},i), i=1,\ldots, n$.

    Now, each plane $(2, {\rm I},i)$ intersect the three planes $(2, {\rm II},i-1),(2, {\rm II},i),(2, {\rm II},i+1)$ in a line distinct from the residual lines in ${\cal R}({\cal A}_n)$ (by the generality assumption). So the adjoint $f$ vanishes on these three lines.  In addition it vanishes on $\binom{n-5}{2}$ points in $(2, \rm I,i)$, the intersection of $(2, \rm I,i)$ with residual conics.  These points represent the lines through $Z_i$ that intersect two skew lines $Z_jZ_{j+1}$ and $Z_kZ_{k+1}$ that do not contain $Z_i$.  So we consider the $n-4$ lines $Z_jZ_{j+1}$ so that $j\notin\{i-2,i-1,i,i+1\}$, and their images under projection from $Z_i$. The projection of these lines is $n-4$ lines in a plane, that meet cyclically at the $n-5$ image points of the $Z_j, j\notin\{i-2,i-1,i,i+1,i+2\}$, and a $(n-4)$-th point $p_i$, the intersection of the images of the two lines $Z_{i-3}Z_{i-2}$ and  $Z_{i+2}Z_{i+3}$.  
    This $(n-4)$-gon in the plane has a unique planar adjoint curve of degree $n-7$ that vanishes on its residual $\binom{n-4}{2}-(n-4)=\binom{n-5}{2}-1$ points, and this adjoint does not pass through $p_i$, cf. \cite[Proposition 2.2]{kohn2021adjoints}. 
    
    Therefore the adjoint $f$ of degree $n-4$ that already vanishes on three lines, must vanish on the planar adjoint of degree $n-7$ and the additional point $p_i$.  Then $f$ vanishes necessarily on the plane $(2, {\rm I},i)$.  This argument applies to each plane $(2, {\rm I},i)$. So the lemma follows.
\end{proof}
 \begin{lemma} \label{lem:quadric surfaces}
If the adjoint is not unique, then there is an $f\in R_{n-4}$ that vanishes on ${\cal R}({\cal A}_n)$ and on all the quadric surfaces  $(2,{\rm III},ij)$  in $\partial_a {\cal A}_n$. 
\end{lemma}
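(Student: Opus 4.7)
Take the form $f \in R_{n-4}$ provided by Lemma \ref{lem:planes}: it vanishes on the residual arrangement ${\cal R}({\cal A}_n)$ and on all planes of types $(2,{\rm I})$ and $(2,{\rm II})$ in $\partial_a {\cal A}_n$. The plan is to show that this $f$ automatically vanishes on every quadric surface $Q = (2,{\rm III},ij)$, by a bidegree count after identifying $Q$ with $\PP^1 \times \PP^1$. Observe that $Q$ is a smooth quadric in the $\PP^3 \subset \PP^5$ cut out by $\langle AB\,i(i+1)\rangle = \langle AB\,j(j+1)\rangle = 0$, so $Q \cong \PP^1 \times \PP^1$, with the two rulings naturally parametrising the intersection point of a line in $Q$ with $Z_iZ_{i+1}$ (first factor) and with $Z_jZ_{j+1}$ (second factor). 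Hence the restriction $F := f|_Q$ is bihomogeneous of bidegree $(n-4, n-4)$.

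Next I collect ruling lines and bidegree $(1,1)$ curves on $Q$ where $F$ must vanish. Each of the four planes $(2,{\rm I},k)$ with $k \in \{i, i+1, j, j+1\}$ meets $Q$ in a ruling line: $V_iL_{j(j+1)}$ and $V_{i+1}L_{j(j+1)}$ in one ruling, $V_jL_{i(i+1)}$ and $V_{j+1}L_{i(i+1)}$ in the other. Each of the four planes $(2,{\rm II},k)$ with $k \in \{i, i+1, j, j+1\}$ meets $Q$ in a residual line of type $(1,{\rm III})$, adding two further ruling lines in each direction. Finally, each residual conic $(1,{\rm IV}, ijc)$ on $Q$ cuts out an irreducible curve of bidegree $(1,1)$. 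In generic position (Assumption \ref{assum:general} combined with positivity of $Z$) these divisors are pairwise distinct and the conics are irreducible. Unique factorisation in the bigraded coordinate ring of $\PP^1\times\PP^1$ then forces $F$ to be divisible by their product, whose bidegree is at least $(4, 4) + (n-6)(1,1) = (n-2, n-2)$ componentwise. Since $\mathrm{bideg}(F) = (n-4, n-4)$ is strictly smaller in each component, $F \equiv 0$.

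The main obstacle will be the cyclically close configurations $j \in \{i+2, i+3\}$, where some residual lines of type $(1,{\rm III})$ collapse onto boundary ruling lines (for instance, $P_{(i+1)(i+2)(i+3)}L_{i(i+1)}$ degenerates to $V_{i+1}L_{(i+2)(i+3)}$ when $j = i+2$), so the count of distinct ruling divisors drops. The saving feature is that the set of indices excluded for the third marked line $c$ of a residual conic shrinks correspondingly, yielding extra conics that exactly compensate; a short case analysis over cyclic distances $1$ and $2$ confirms that the total bidegree of distinct divisors on $Q$ remains at least $(n-2, n-2)$ for every $n \geq 5$ and every valid pair $(i,j)$. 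Since $F \equiv 0$ on each quadric surface $Q = (2,{\rm III},ij)$, the lemma follows.
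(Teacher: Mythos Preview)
Your proof is correct and follows essentially the same approach as the paper: both start from the $f$ of Lemma~\ref{lem:planes}, collect the ruling lines coming from the planes $(2,{\rm I},k)$ and $(2,{\rm II},k)$ together with the residual conics on $Q$, and observe that $f|_Q$ has too low a degree to vanish on all of them without vanishing identically. Your bidegree count on $Q\cong\PP^1\times\PP^1$ is just a rephrasing of the paper's count of $n-2$ plane sections; in fact your explicit case analysis for cyclic distance one is a welcome addition, since the paper's phrasing ``eight lines forming four reducible conics plus $n-6$ residual conics'' is literally correct only when $|j-i|>2$, whereas for $j=i+2$ two of the eight lines coincide with $(2,{\rm I})$-ruling lines and one gains an extra residual conic, leaving the total at $n-2$ as you note.
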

\begin{proof}
    By Lemma \ref{lem:planes} we may assume that there is an $f\in R_{n-4}$ that vanishes on 
    ${\cal R}({\cal A}_n)$ and on all the planes of type $(2,{\rm I})$ and $(2,{\rm II})$ in $\partial_a {\cal A}_n$.  Let $S$ be the quadric surface   $(2,{\rm III},ij)$.   It contains a line in each of the four planes $(2,{\rm I},i),(2,{\rm I},i+1),(2,{\rm I},j),(2,{\rm I},j+1)$  and a  line in each of the planes $(2,{\rm II},i),(2,{\rm II},i+1),(2,{\rm II},j),(2,{\rm II},j+1)$ that together form four reducible conics. In addition there are $n-6$ residual conics on $S$;  one for each line $Z_kZ_{k+1}$ that is disjoint from $Z_iZ_{i+1}$ and $Z_jZ_{j+1}$, the lines defining $S$. But then $f \in R_{n-4}$ vanishes on $n-2$ plane sections of $S$, and hence must vanish on all of $S$. Since this argument applies to each quadric surface of type $(2,{\rm II})$, the lemma follows.
\end{proof}

\begin{proposition}\label{prop:unique adjoint} The adjoint $\alpha_{{\cal A}_n}\in R_{n-4}\setminus \{0\}$ is unique, up to scalar.
\end{proposition}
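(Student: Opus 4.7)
The plan is to argue by contradiction using Lemmas \ref{lem:planes} and \ref{lem:quadric surfaces}. If the space of adjoints in $R_{n-4}$ had dimension at least two, those lemmas together would produce a nonzero form $f \in R_{n-4}$ vanishing on ${\cal R}({\cal A}_n)$ and on every $2$-dimensional stratum of $\partial_a {\cal A}_n$: all planes of type $(2,{\rm I})$ and $(2,{\rm II})$ and all quadric surfaces of type $(2,{\rm III})$. The goal is then to show that such an $f$ must vanish on every boundary threefold $(3,{\rm I},i)$, forcing $f$ to be divisible by $\prod_{i=1}^n \langle ABi(i+1)\rangle$, a form of degree $n$, larger than $\deg f = n-4$.

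Fix $i \in [n]$ and let $V = (3,{\rm I},i) \subset {\rm Gr}_\C(2,4)$ be the quadric threefold cut out by $\ell_i := \langle ABi(i+1)\rangle$. For each $k \in [n] \setminus \{i\}$, the Schubert relation \eqref{schubert relation} identifies the hyperplane section $V \cap \{\ell_k = 0\}$: when $k \in \{i-1, i+1\}$ it is the reduced union of two transversally meeting planes, one of type $(2,{\rm I})$ and one of type $(2,{\rm II})$; otherwise it is the smooth quadric surface $(2,{\rm III}, ik)$. In either case, all components are $2$-dimensional strata on which $f|_V$ vanishes, so $f|_V$ vanishes on $n-1$ distinct reduced hyperplane sections of $V$. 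A direct inspection using Theorem \ref{thm:stratanew} shows that the prime Weil divisors occurring in these sections are pairwise distinct across different $k$. Since $V$ is a normal quadric threefold (a cone over $\PP^1 \times \PP^1$), the effective Weil divisor $\mathrm{div}(f|_V)$ dominates their sum:
$$\mathrm{div}(f|_V) \; \geq \; \sum_{k \in [n] \setminus \{i\}} V \cap \{\ell_k = 0\}.$$
The right-hand side has Cartier divisor class $(n-1)\,H$, where $H$ is the hyperplane class on $V$, while $\mathrm{div}(f|_V) \sim (n-4)\,H$. Since $n-4 < n-1$, we conclude $f|_V = 0$.

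Hence $\ell_i$ divides $f$ in $R$ for every $i \in [n]$. The homogeneous coordinate ring $R$ of the smooth quadric ${\rm Gr}_\C(2,4) \subset \PP^5$ is a UFD (its divisor class group is trivial, since $\mathrm{Pic}({\rm Gr}_\C(2,4))$ is generated by the hyperplane class), and the $n$ linear forms $\ell_i$ are pairwise non-associate irreducibles in $R$. Therefore $\prod_{i=1}^n \ell_i$ divides $f$, forcing $\deg f \geq n$, which contradicts $\deg f = n-4$. The most delicate step is the summed divisor inequality: it relies on each of the $n-1$ hyperplane sections of $V$ being reduced (smooth quadrics in the irreducible case, transversal unions of two planes in the reducible case) and on the prime components across different $k$ being pairwise distinct, so that no multiplicity is double-counted when we combine the individual inequalities $\mathrm{div}(f|_V) \geq V \cap \{\ell_k = 0\}$.
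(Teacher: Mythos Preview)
Your proof is correct and follows essentially the same route as the paper's: assume non-uniqueness, invoke Lemmas \ref{lem:planes} and \ref{lem:quadric surfaces} to get an $f$ vanishing on all $2$-dimensional strata, then use a degree/divisor count on each quadric threefold $(3,{\rm I},i)$ to force $f|_V = 0$, and finally conclude $f = 0$ in $R$. The only cosmetic differences are that you phrase the overcount on $V$ in Weil-divisor language (comparing $(n-4)H$ to $(n-1)H$) rather than surface degrees ($2n-2$ versus $2(n-4)$), and you finish via the UFD property of $R$ instead of a second degree count on ${\rm Gr}_\C(2,4)$; both are equivalent formulations of the same argument.
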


\begin{proof}
We assume that the adjoint is not unique. By Lemmas \ref{lem:planes} and \ref{lem:quadric surfaces} we can choose an adjoint $f$ that vanishes on all the $2$-dimensional strata of $\partial_a {\cal A}_n$. Now fix a boundary quadric threefold $(3,{\rm I},i)$ of 
$\partial_a {\cal A}_n$.  It contains the four planes $(2,{\rm I},i),(2,{\rm I},i+1),(2,{\rm II},i),(2,{\rm II},i+1)$, and the $n-3$ quadric surfaces $(2,{\rm II},ij), j\notin\{i-1,i,i+1\}$.  So $f$ vanishes on a surface of degree $2n-2$ on $(3,{\rm I},i)$.  But $f$ has degree $n-4$, so it must vanish on $(3,{\rm I},i)$.  Again, this argument applies to each of the $n$ boundary quadric threefolds, so $f$ vanishes on a threefold of degree $2n$ in ${\rm Gr}(2,4)$. But then $f$ vanishes on ${\rm Gr}(2,4)$. I.e., it is the zero polynomial in $R_{n-4}$ and the proposition follows.
\end{proof}

\begin{proof}[Proof of Theorem~\ref{adjoint}]
    Propositions~\ref{prop:existence}/\ref{prop:unique adjoint} show existence/uniqueness.    
\end{proof}
\begin{corollary}\label{cor:restricted adjoint}
    The restriction of the adjoint to any quadric threefold $(3,{\rm I},i)$ in $\partial_a {\cal A}_n$ is the unique hypersurface section of degree $n-4$ that interpolates the residual lines and conics contained in the boundary surfaces on the threefold $(3,{\rm I},i)$.
\end{corollary}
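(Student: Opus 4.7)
The plan is to mirror the strategy of Proposition~\ref{prop:unique adjoint}, carried out inside the single threefold $(3,{\rm I},i)$, and conclude with a degree count on $(3,{\rm I},i)$ that forces any extra section to vanish identically. Existence is immediate: the restriction $\alpha_{{\cal A}_n}|_{(3,{\rm I},i)}$ is a degree-$(n-4)$ hypersurface section, and since $\alpha_{{\cal A}_n}$ vanishes on all of ${\cal R}({\cal A}_n)$, it vanishes on the residual lines and conics contained in the boundary of $(3,{\rm I},i)$.

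For uniqueness, suppose $g$ is a second linearly independent such section, and form the pencil $\mathcal{P} = \{c_1 \alpha_{{\cal A}_n}|_{(3,{\rm I},i)} + c_2 g\}$. Following Lemma~\ref{lem:planes}, the plane $(2,{\rm II},i)$ contains $n-4$ residual lines, so any degree-$(n-4)$ form on $(2,{\rm II},i) \cong \PP^2$ vanishing on all of them is a scalar multiple of the product of these $n-4$ linear forms. The restriction of $\mathcal{P}$ to $(2,{\rm II},i)$ is therefore at most one-dimensional, so one can choose $f \in \mathcal{P}\setminus\{0\}$ with $f|_{(2,{\rm II},i)} = 0$. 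The point $Z_iZ_{i+1} = (2,{\rm II},i) \cap (2,{\rm II},i+1)$ lies on no residual line in $(2,{\rm II},i+1)$, hence the extra-point argument of Lemma~\ref{lem:planes} forces $f|_{(2,{\rm II},i+1)} = 0$ as well.

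Continuing the reasoning of Lemmas~\ref{lem:planes} and~\ref{lem:quadric surfaces} in the restricted setting, $f$ extends to vanish on the remaining boundary $2$-strata of $(3,{\rm I},i)$: the planes $(2,{\rm I},i), (2,{\rm I},i+1)$ and the $n-3$ quadric surfaces $(2,{\rm III},ij)$ with $j \notin \{i-1, i, i+1\}$. Thus $f$ vanishes on a surface of total degree $4 + 2(n-3) = 2n - 2$ in $\PP^5$. But the degree-$(n-4)$ polynomial $f$ restricts to the degree-$2$ threefold $(3,{\rm I},i)$ as a divisor of degree $2(n-4) = 2n - 8$ in $\PP^5$, and $2n-2 > 2n-8$. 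Hence $f$ must vanish identically on $(3,{\rm I},i)$, contradicting its being a nonzero class in the coordinate ring of $(3,{\rm I},i)$. This forces $g$ to be a scalar multiple of $\alpha_{{\cal A}_n}|_{(3,{\rm I},i)}$.

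The principal technical obstacle is propagating the vanishing in the reduced setting. In Lemma~\ref{lem:planes}, vanishing of the pencil member on a plane of type $(2,{\rm I})$ is deduced from vanishing on three consecutive planes of type $(2,{\rm II})$; here only two such planes lie in the boundary of $(3,{\rm I},i)$. Similarly, Lemma~\ref{lem:quadric surfaces} uses eight lines on a boundary quadric surface coming from eight planes, of which only four are in the boundary of $(3,{\rm I},i)$. The missing conditions must be supplied by the residual conics of type $(1,{\rm IV},ijk)$ contained in $(3,{\rm I},i)$ and by additional lines cut out on the boundary planes once partial vanishing of $f$ on the boundary quadric surfaces is established; verifying that the resulting system of interpolation conditions is sufficiently rich to force the vanishing is the most delicate part of the argument.
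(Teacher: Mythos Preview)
Your approach is the paper's: rerun the uniqueness argument of Lemmas~\ref{lem:planes}, \ref{lem:quadric surfaces}, and Proposition~\ref{prop:unique adjoint} inside the fixed threefold $(3,{\rm I},i)$, and then use the degree count $2n-2>2(n-4)$ on $(3,{\rm I},i)$ to reach a contradiction. The paper's own proof is literally the sentence ``This follows immediately from the proof of uniqueness of $\alpha_{{\cal A}_n}$,'' so at that level you are aligned.

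However, your proof is incomplete at exactly the point you yourself flag. You correctly observe that the propagation in Lemmas~\ref{lem:planes} and~\ref{lem:quadric surfaces} uses boundary surfaces not contained in $(3,{\rm I},i)$ --- a third plane $(2,{\rm II},i-1)$ in the $(2,{\rm I},i)$ step, and four additional planes for each quadric surface step --- and you then defer the verification, calling it ``the most delicate part of the argument.'' But that \emph{is} the content of the corollary: without establishing that the reduced collection of constraints still forces $f$ to vanish on every boundary surface of $(3,{\rm I},i)$, the degree count never fires and nothing has been proved. Concretely, after factoring out only the two lines $(1,{\rm I},i)$ and $(1,{\rm II},i(i+1))$ from $f|_{(2,{\rm I},i)}$ you are left with a curve of degree $n-6$ rather than $n-7$, and you must exhibit enough residual vertices in $(2,{\rm I},i)$ --- all of which do lie on residual curves in boundary surfaces of $(3,{\rm I},i)$ --- to kill it. Similarly, on each quadric surface $(2,{\rm III},ij)$ the $n-4$ residual plane sections exactly saturate the degree $n-4$, so one further independent condition is needed, which can only come from a $(2,{\rm I})$-plane you have not yet handled. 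Finding an order of propagation that closes this loop is precisely what has to be written down, and your proposal stops short of doing so.
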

\begin{proof} This follows immediately from the proof of uniqueness of $\alpha_{{\cal A}_n}$.
\end{proof}
\begin{remark}
The adjoint $\alpha_{{\cal A}_n}\in R_{n-4}$ does not vanish on any of the boundary vertices of ${\cal A}_n$, it has multiplicity one along the lines and conics in the residual arrangement, and has multiplicity at least one at the residual vertices.  

The adjoint defines a canonical divisor on a $3$-dimensional variety birational to the algebraic boundary $\partial_a {\cal A}_n$.  In fact, the blowup of ${\rm Gr}_{\mathbb{C}}(2,4)$ along the residual arrangement, first along the residual vertices, and then along the strict transforms of the residual curves,  has a canonical divisor
that is equivalent to $-4H+2E_L+2E_C+3E_V$ (cf. \cite[Example 15.4.3]{Fu}), where $E_L$ is the union of the exceptional divisors over the residual lines, $E_C$ is the union of the exceptional divisors over the residual conics and $E_V$ is the union of the exceptional divisors over the residual vertices.  The algebraic boundary is a hypersurface of degree $n$ 
restricted to ${\rm Gr}_{\mathbb{C}}(2,4)$ that has multiplicity three along the residual lines and conics and multiplicity four at the residual vertices.  So, by adjunction, the strict transform $Y_n$ of the algebraic boundary  has a canonical divisor 
that is the restriction of a divisor equivalent to $(n-4)H-E_L-E_C-E_V$ (cf. \cite[Example 4.2.6]{Fu}). The adjoint $\{\alpha_{{\cal A}_n}=0\}$ divisor is therefore the unique divisor whose strict transform is a canonical divisor on $Y_n$. 
\end{remark}

\section{The amplituhedron is a positive geometry} \label{sec:6}
In this section, we define a rational $4$-form on ${\cal A}_n$ with simple poles along the algebraic boundary $\partial_a {\cal A}_n$ and describe recursively its (Poincar\'e) residues along the boundary strata.   We define the $4$-form in terms of the adjoint and argue that there is a unique scaling of the adjoint function such that the iterated residues of the $4$-form are $\pm 1$ at all vertices of the amplituhedron $\cA_n$, thus showing that the $4$-form is a canonical form for a positive geometry.

We prove the main result (Theorem \ref{thm:main}), which we reiterate here.
\begin{theorem*}
    Under Assumption~\ref{assum:general}, the amplituhedron $({\rm Gr}_{\mathbb{C}}(2,4),{\cal A}_n)$ is a positive geometry. 
\end{theorem*}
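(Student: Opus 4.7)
The plan is to construct $\Omega({\cal A}_n)$ explicitly from the adjoint of Theorem~\ref{adjoint} and verify the axioms of Definition~\ref{def:posgeom} by induction on the dimension of the boundary strata. The candidate $4$-form is
\[
\Omega({\cal A}_n) \;=\; \lambda \cdot \frac{\alpha_{{\cal A}_n}}{\prod_{i=1}^{n} \langle AB\,i(i+1)\rangle}\cdot \mu,
\]
where $\mu$ is a fixed meromorphic top form on ${\rm Gr}_\C(2,4)$ chosen so that the rational factor has the Pl\"ucker degree required by $K_{{\rm Gr}(2,4)} = \mathcal{O}(-4)$, and $\lambda$ is a real constant fixed at the end by normalization. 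The degree count $\mathrm{deg}(\alpha_{{\cal A}_n}) - n = -4$ works out by construction of the adjoint.

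First I would check the polar locus. The adjoint $\alpha_{{\cal A}_n}$ vanishes only on the residual arrangement ${\cal R}({\cal A}_n)$, which contains no component of any of the boundary divisors $D_i = \{\langle AB\,i(i+1)\rangle = 0\}$. So $\Omega({\cal A}_n)$ has simple poles exactly along $D_1, \ldots, D_n$ and nowhere else.

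The core step is the inductive computation of Poincar\'e residues. For each $i$, the residue $\mathrm{Res}_{D_i}\Omega({\cal A}_n)$ is a meromorphic $3$-form on the quadric threefold $D_i = (3,{\rm I},i)$ with numerator $\alpha_{{\cal A}_n}|_{D_i}$ and denominator the product of the defining equations of the $2$-dimensional facets of $(D_i)_{\geq 0} = D_i \cap {\cal A}_n$ described in Proposition~\ref{prop:boundary faces}. By Corollary~\ref{cor:restricted adjoint}, $\alpha_{{\cal A}_n}|_{D_i}$ is the unique adjoint for the residual arrangement inherited on $D_i$, so $\mathrm{Res}_{D_i}\Omega({\cal A}_n)$ is precisely of the ``adjoint-over-boundary-product'' shape required of a canonical form for $(D_i,(D_i)_{\geq 0})$ as a positive geometry. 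Iterating through strata of types $(2,\ast)$ and $(1,\ast)$ reduces the computation to the vertices of type $(0,{\rm I})$, at which the iterated residue is a scalar. Choosing $\lambda$ so that one such scalar equals $+1$ fixes the form.

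The main obstacle is extending Corollary~\ref{cor:restricted adjoint} down the stratification: after descending to a $2$- or $1$-dimensional boundary face, one must verify that the restriction of the current numerator is again the unique adjoint of that face with respect to its inherited residual arrangement. This should follow by running the counting and uniqueness arguments of Lemmas~\ref{lem:planes}--\ref{lem:quadric surfaces} on the restricted residual arrangements, in combination with the face data of Proposition~\ref{prop:boundary faces}. Once this induction is in place, the consistency check --- that the single choice of $\lambda$ yields iterated residue $\pm 1$ at \emph{every} vertex of $(0,{\rm I})$ type, with sign controlled by the induced orientation (cf.~Remark~\ref{rem:sign}) --- follows automatically: any deviation would contradict the uniqueness of the canonical form of a lower-dimensional face, which is part of the inductive hypothesis. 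Uniqueness of $\Omega({\cal A}_n)$ itself is then part of Definition~\ref{def:posgeom}.
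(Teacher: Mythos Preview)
Your approach is essentially the paper's: build the candidate form from the adjoint, compute iterated residues down the boundary stratification, and normalize at a vertex. The paper formalizes what you call the ``main obstacle'' as Propositions~\ref{cor:restricted residuals} and~\ref{prop:residueshaverightdivisors}: it describes the restriction of ${\cal R}({\cal A}_n)$ to each boundary surface and threefold, then checks case by case that the residue along each stratum is, up to scalar, the canonical form of that stratum. On a plane of type $(2,{\rm I})$ the face is an $(n-1)$-gon and the restricted adjoint is the classical polygonal adjoint; on planes of type $(2,{\rm II})$ and quadrics of type $(2,{\rm III})$ the $n-4$ residual lines or plane sections simply cancel against denominator factors. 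This is less a rerun of the uniqueness lemmas than a direct bookkeeping of which residual curves sit in which boundary surface.

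Your final consistency step has a gap. Knowing that the iterated residue along each lower-dimensional face is \emph{some} scalar multiple of its canonical form does not force that scalar to be $\pm 1$ everywhere after a single global choice of $\lambda$. Uniqueness of the canonical form on a face only determines it up to the scalar on that particular face; it says nothing about how scalars on different faces compare. The paper closes this with a separate ingredient you did not mention: connectedness of the $1$-skeleton of $\partial {\cal A}_n$ (Corollary~\ref{cor:connected one-skeleton}). On each boundary line segment the $1$-form has residues at the two endpoints that are negatives of each other, so once one vertex residue is set to $1$, the value $\pm 1$ propagates along edge paths to every boundary vertex. Replace the appeal to ``uniqueness of the canonical form of a lower-dimensional face'' by this connectedness argument.
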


Like in Sections \ref{sec:4} and \ref{sec:5}, we set ${\cal A}_n = {\cal A}_n(Z)$ for a totally positive matrix $Z$ satisfying Assumption \ref{assum:general}. 
The adjoint $\alpha_{{\cal A}_n}\in R_{n-4}$ defines the zeros of a rational $4$-form on ${\rm Gr}_{\mathbb{C}}(2,4)$ with simple poles along the $\partial {\cal A}_n$ as follows.  We fix a general line $L$ disjoint from all the lines $Z_iZ_j$ in $\PP^3$ and let $D_L \subset {\rm Gr}_{\mathbb{C}}(2,4)$ be the tangent hyperplane section at $[L]$, so that ${\rm Gr}_{\mathbb{C}}(2,4)\setminus D_L\cong \Af^4$, with local coordinates $x_1,x_2,x_3,x_4$. In these coordinates, we can write the restriction of the $4$-form to $U_{L} = {\rm Gr}_{\mathbb{C}}(2,4)\setminus D_L$ as 
\begin{equation} \label{eq:canform}
\Omega({\cal A}_n)_{|U_L} \, = \, \frac{\alpha_{{\cal A}_n}}{\prod_{1\leq i\leq n} \langle AB i(i+1) \rangle} \, d\omega \end{equation}
where $d\omega=dx_1\wedge dx_2\wedge dx_3\wedge dx_4$.
To see that this form is, up to scalar multiple, a canonical form for a positive geometry, we will check the recursive axioms from Definition \ref{def:posgeom}. First we describe the restriction of the residual arrangement ${\cal R}({\cal A}_n)$ to surface and threefold boundary components in $\partial_a{\cal A}_n$.

 \begin{proposition}\label{cor:restricted residuals} In a  plane of type $(2,{\rm I})$, the boundary face is an $(n-1)$-gon and the restriction of the residual arrangement ${\cal R}(\cA_n)$ is the residual arrangement to this $(n-1)$-gon. In a plane of type $(2,{\rm II})$ the boundary face is a triangle and the restriction of ${\cal R}(\cA_n)$  consists of $n-4$ residual lines. In a boundary quadric surface of type $(2,{\rm III})$, the boundary face is a quadrilateral and the restriction of ${\cal R}(\cA_n)$  consists of $n-4$ plane sections.  
    
In a quadric threefold $(3,{\rm I},i)$, the boundary of its facet in $\partial \cA_n$ is the union of the boundary faces in the surfaces of type $(2,{\rm I})$,$(2,{\rm II})$ and $(2,{\rm III})$ contained in the threefold.
The restriction to the threefold of the residual arrangement ${\cal R}(\cA_n)$ is the union of the restrictions to the boundary surfaces. 
\end{proposition}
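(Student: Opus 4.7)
The plan is to proceed type by type through the boundary strata, using the Schubert-theoretic descriptions from Section~\ref{sec:3} and the classification of residual strata from Theorem~\ref{thm:residualstrata}. The structural description of each boundary face was already obtained in Proposition~\ref{prop:boundary faces}, so in each case the task is to identify which curves of ${\cal R}({\cal A}_n)$ lie inside the stratum and to match them with the stated geometric description.

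For the planes $V_i = (2,{\rm I},i)$, I use the projection $\pi_i$ away from $Z_i$ to identify $V_i$ with $\mathbb{P}^2$, carrying the edges $(1,{\rm II},ij)$ and $(1,{\rm I},i)$ of the boundary polygon to the $n-1$ lines through the consecutive pairs $\pi_i(Z_j), \pi_i(Z_{j+1})$. A residual line $(1,{\rm III},kl) = P_{(k-1)k(k+1)}L_{l(l+1)}$ intersects $V_i$ in a single point precisely when $i \in \{k-1,k,k+1\}$ (so that $Z_i$ lies in the plane $Z_{k-1}Z_kZ_{k+1}$), and a direct computation using the Schubert relations identifies this point as a residual vertex of type $(0,{\rm III})$ or $(0,{\rm IV})$; residual conics $(1,{\rm IV},abc)$ generically miss $V_i$, since the three lines $Z_aZ_{a+1}, Z_bZ_{b+1}, Z_cZ_{c+1}$ admit a transversal through $Z_i$ only if $Z_i$ lies on their common regulus, which is a special condition. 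The count $(n-4) + \binom{n-3}{2} = (n-1)(n-4)/2$ of $V_i$-residual vertices matches the number of pairs of non-adjacent edges of an $(n-1)$-gon. For $(2,{\rm II},i)$, the lines in the plane $H_i = Z_{i-1}Z_iZ_{i+1}$ through the unique intersection point of $H_i$ with a $Z_jZ_{j+1}$ form the pencil $(1,{\rm III},ij)$, yielding the $n-4$ residual lines; no residual conic can sit inside a plane since its three marked lines are pairwise skew. For $(2,{\rm III},ij)$, the restriction consists of the hyperplane sections $L_{k(k+1)} \cap (2,{\rm III},ij)$ for the $n-4$ indices $k \notin \{i,i+1,j,j+1\}$, and these are irreducible residual conics $(1,{\rm IV},ijk)$ when $\{k,k+1\}$ is also disjoint from $\{i-1,i+1,j-1,j+1\}$, degenerating to a boundary-edge-plus-residual-line pair in the remaining four cases via Schubert relations such as $L_{(i-1)i}L_{i(i+1)} = V_i \cup P_{(i-1)i(i+1)}$.

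For the threefold $(3,{\rm I},i) = L_{i(i+1)}$, the claim amounts to showing that every residual curve contained in $L_{i(i+1)}$ already lies in one of its $2$-dimensional boundary surfaces. I verify this case by case from the Schubert descriptions: $(1,{\rm III},kl) \subset L_{i(i+1)}$ holds iff $l = i$ or $i \in \{k-1,k\}$, and then $(1,{\rm III},kl)$ lies in $(2,{\rm III},i(k-1))$ or $(2,{\rm III},ik)$ in the first case and in $(2,{\rm II},k)$ in the second, each of which is a boundary surface of $(3,{\rm I},i)$; a residual conic $(1,{\rm IV},abc) \subset L_{i(i+1)}$ forces $i \in \{a,b,c\}$ (say $i=a$), whence the conic lies in $(2,{\rm III},ab) = L_{a(a+1)}L_{b(b+1)}$, a boundary surface of $(3,{\rm I},i)$. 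I expect the main obstacle to be the combinatorial bookkeeping in the $(2,{\rm I})$ case, where one has to exhibit a bijection between the residual vertices of types $(0,{\rm III})$ and $(0,{\rm IV})$ lying in $V_i$ and the pairs of non-adjacent edges of the $(n-1)$-gon after projection by $\pi_i$; the remaining identifications are essentially direct manipulations of Schubert incidences.
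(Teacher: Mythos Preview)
Your overall strategy matches the paper's: both proceed type by type through the boundary surfaces and the threefold, using the Schubert descriptions from Table~\ref{tab:complexstrata} and the face structure from Proposition~\ref{prop:boundary faces}. Your treatment of the planes $(2,{\rm I})$ and $(2,{\rm II})$ and of the threefold $(3,{\rm I},i)$ is essentially the paper's argument, phrased slightly differently (the projection $\pi_i$ is a nice touch but not needed). One small point: your assertion that residual conics ``generically miss $V_i$'' should explicitly invoke Assumption~\ref{assum:general}, since a transversal through $Z_i$ to three lines $Z_aZ_{a+1},Z_bZ_{b+1},Z_cZ_{c+1}$ would meet five boundary lines.

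However, your analysis of the quadric surface $Q=(2,{\rm III},ij)$ contains a genuine index error. You take the hyperplane sections $L_{k(k+1)}\cap Q$ for $k\notin\{i,i+1,j,j+1\}$, but the correct exclusion is only $k\notin\{i,j\}$: the sections $k=i+1$ and $k=j+1$ are reducible via $L_{i(i+1)}L_{(i+1)(i+2)}=V_{i+1}\cup P_{i(i+1)(i+2)}$ and contribute the residual lines $(1,{\rm III},(i+1)j)$ and $(1,{\rm III},(j+1)i)$, which your list omits entirely. Conversely, your ``remaining four degenerate cases'' $k\in\{i-2,i-1,j-2,j-1\}$ are wrong: for $k=i-2$ (and $k=j-2$) the three index pairs $\{i-2,i-1\},\{i,i+1\},\{j,j+1\}$ are pairwise disjoint, so $L_{(i-2)(i-1)}\cap Q$ is the irreducible residual conic $(1,{\rm IV},(i-2)ij)$, not a reducible pair. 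The reducible cases are exactly $k\in\{i-1,i+1,j-1,j+1\}$, each yielding one boundary line and one residual line; the four residual lines then pair across the two rulings of $Q$ to form two plane sections, which together with the $n-6$ residual conics give the $n-4$ plane sections claimed. (When $|j-i|=2$ two of these residual lines are absent and there are $n-5$ conics plus one pair of lines; the count is still $n-4$.) With this correction your argument goes through.
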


\begin{proof}
    The boundary plane $H = (2,{\rm I},i)$ contains no residual lines or conics by Figure \ref{fig:rough stratification}. 
    The boundary lines in $H$ are the lines $(1,{\rm II},ij), i\notin \{j,j+1\}$ in addition to the line $(1,{\rm I},i)$.  They define the boundary of the face $H\cap \cA_n$, an $(n-1)$-gon with vertices $(0,{\rm I},ij), j\neq i$ (see the pentagons in Figure \ref{fig:schlegel3D}).  
    A residual vertex in $H$ is of type $(0,{\rm III},ij)$ or $(0,{\rm IV},ijk)$. The vertex $(0,{\rm III},ij)$ is the intersection of the line $(1,{\rm I},i)$ and $(1,{\rm II},ij)$, while the vertex $(0,{\rm IV},ijk)$ is the intersection of $(1,{\rm II},ij)$ and $(1,{\rm II},ik)$.  Altogether these residual vertices are the intersections of boundary lines that do not intersect on the boundary of the $(n-1)$-gon in the plane. For $n = 6$ (Figure \ref{fig:schlegel4D}, right), the situation is as in Figure \ref{fig:adjpentagon}, where $P = H\cap\partial\cA_n$ and ${\cal R}({\cal A}_6) \cap H$ consists of the blue points.
    We conclude that the restriction ${\cal R}(\cA_n) \cap H$ is the residual arrangement in the algebraic boundary of the $(n-1)$-gon, the face $H\cap\partial\cA_n$.
 
   The boundary faces and facets of $\partial\cA_n$ were all described in Proposition \ref{prop:boundary faces}, so it remains to describe the restriction of the residual arrangement.
   
   The plane $H=(2,{\rm II},i)$ contains the $n-4$ residual lines $(1,{\rm III},ij)$. In Figure \ref{fig:boundarystrata}, for the plane $(2,{\rm II},8)$, these are obtained by placing the red tick in the diagram for $(1,{\rm III},28)$ on any of the $n-4 = 4$ edges $(j,j+1)$ not adjacent to the green triangle. Any residual vertex in the plane $H$ is of type $(0,{\rm II}),(0,{\rm III}), (0, {\rm IV})$ or $(0,{\rm V})$, see Figure \ref{fig:rough stratification}.  More precisely, they are the vertices $(0,{\rm II},ij),(0,{\rm III},ij), (0,{\rm IV},(i-1)ik),(0,{\rm IV},(i+1)(i-1)k)$ and $(0,{\rm V},ijk)$.  These residual vertices lie in the residual lines $(1,{\rm III},ij)$, which all lie in $H$. So the restriction ${\cal R}(\cA_n) \cap H$ equals the $n-4$ residual lines.
    
    Similarly, the boundary quadric surface $Q=(2,{\rm III},ij)$ contains $n-4$ plane sections that belong to the residual arrangement: When $|j-i|>2$, then the $n-6$ residual conics $(1,{\rm IV},ijk)$ for $k\notin \{i-1,i,i+1,j-1,j,j+1\}$ all lie on $Q$. Furthermore the four residual lines $(1,{\rm III},ij),(1,{\rm III},(i+1)j), (1,{\rm III},ji), (1,{\rm III},(j+1)i)$ also all lie in $Q$.  Notice that  $(1,{\rm III},ij)\cup (1,{\rm III},(i+1)j)$ and $(1,{\rm III},ji)\cup (1,{\rm III},(j+1)i)$ are plane sections of $Q$.  Now, no other residual lines or conics lie on $Q$, and residual vertices on $Q$ are of type
    $(0,{\rm II})$, $(0,{\rm III})$, $(0,{\rm IV}),(0,{\rm V})$ or $(0,{\rm VI})$ by Figure \ref{fig:rough stratification}.  As above, one may check that each of these residual vertices lies in one of the residual lines or conics on $Q$.
     We conclude that the restriction of ${\cal R}(\cA_n)$ to $Q=(2,{\rm III},ij)$ is $n-4$ plane sections when $|j-i|>2$.
     When $|j-i|=2$, e.g. $j=i+2$, then the $n-5$ residual conics $(1,{\rm IV},i(i+2)k), i+3<k<n+i-1$ all lie on $Q$. Furthermore, the two residual lines $(1,{\rm III},i(i+2))$ and $(1,{\rm III},(i+3)i)$ also all lie in $Q$.  Their intersection is the residual vertex $(0,{\rm II},i(i+3))$, and their union $(1,{\rm III},i(i+2)) \cup (1,{\rm III},(i+3)i)$ forms a plane section of $Q$.  Now, no other residual lines or conics lie on $Q$, and the residual vertices on $Q$ are vertices of type
    $(0,{\rm II})$, $(0,{\rm III})$, $(0,{\rm IV}),(0,{\rm V})$ or $(0,{\rm VI})$ by Figure \ref{fig:rough stratification}.  Again, one may check that each of these residual vertices lies in one of the residual lines or conics on $Q$.
     We conclude that the restriction of ${\cal R}(\cA_n)$ to $Q=(2,{\rm III},i(i+2))$ consists of $n-4$ plane sections.

Any boundary quadric threefold $D$, say $(3,{\rm I},i)$, intersects the algebraic boundary $\partial_a \cA_n $ only along boundary surfaces,  so the residual vertices, lines and conics in $D$ form, by the first part of the proposition, the restriction of the residual arrangement to $D$.       
\end{proof}

For the purpose of our argument, we generalize the definition of residual arrangement so that it applies to boundaries of ${\cal A}_n$. 
Let ${\cal X}_{\geq 0}$ be a semi-algebraic subset of the real points ${\cal X}(\mathbb{R})$ of an irreducible complex variety ${\cal X}$. The Euclidean interior of ${\cal X}_{\geq 0}$ is ${\cal X}_{>0}$. Assume that the algebraic boundary $\partial_a {\cal X}_{\geq 0}$, i.e., the Zariski closure of ${\cal X}_{\geq 0} \setminus {\cal X}_{>0}$ in ${\cal X}$, is a union of irreducible normal divisors $D_1,...,D_r$ with pairwise transverse intersection. The~residual arrangement ${\cal R}({\cal X}_{\geq 0})$ is the union of all intersections of finitely many $D_i$ which do not intersect ${\cal X}_{\geq 0}$. For $({\cal X}, {\cal X}_{\geq 0}) = ({\rm Gr}_{\mathbb{C}}(2,4), {\cal A}_n)$, ${\cal R}({\cal X}_{\geq 0})$ is the residual arrangement from Definition \ref{def:resarr}.

\begin{proposition} \label{prop:residueshaverightdivisors}
The successive residues of the nonzero rational $4$-form $\Omega({\cal A}_n)$ from \eqref{eq:canform} along each closed stratum $D$ in $\partial_a{\cal A}_n$ are the unique, up to scalar multiple, rational forms with simple poles along $\partial_a D_{\geq 0}$ and zeros along the residual arrangement ${\cal R}( D_{\geq 0})$.
\end{proposition}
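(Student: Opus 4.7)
The plan is to proceed by induction on codimension, computing each successive Poincaré residue explicitly on the closed strata listed in Proposition~\ref{prop:boundary faces} and invoking, at each level, a uniqueness statement analogous to Theorem~\ref{adjoint} for the lower-dimensional face. At every step I will verify two things: that the denominator of the resulting form is a product of defining equations of the facets of the current face, and that the numerator is a section of the correct degree vanishing on the restricted residual arrangement described in Proposition~\ref{cor:restricted residuals}.

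For the base case $D=(3,{\rm I},i)$, the residue of $\Omega(\cA_n)$ along $D$ is
\[ {\rm Res}_D\,\Omega(\cA_n) \,=\, \frac{\alpha_{\cA_n}|_D}{\prod_{j\neq i}\langle AB j(j+1)\rangle|_D}\,\omega_D, \]
for a holomorphic $3$-form $\omega_D$ on the smooth locus of $D$. Using the Schubert relation \eqref{schubert relation}, the restrictions of $\langle AB(i-1)i\rangle$ and $\langle AB(i+1)(i+2)\rangle$ to $D$ each factor as the product of the defining linear forms of $(2,{\rm I},i)\cup(2,{\rm II},i)$ and of $(2,{\rm I},i+1)\cup(2,{\rm II},i+1)$, respectively, while the other $n-3$ factors $\langle AB j(j+1)\rangle|_D$ cut out the quadric surfaces $(2,{\rm III},ij)$. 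By Proposition~\ref{prop:boundary faces}, these are precisely the facets of $\partial_a D_{\geq 0}$, so the poles of the residue are simple and supported exactly there. By Corollary~\ref{cor:restricted adjoint}, $\alpha_{\cA_n}|_D$ is, up to scalar, the unique section of degree $n-4$ on $D$ interpolating the curves of ${\cal R}(\cA_n)$ contained in $D$, which equals ${\cal R}(D_{\geq 0})$ by Proposition~\ref{cor:restricted residuals}. This establishes the claim in codimension one.

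For successive residues, I would handle the three types of $2$-dimensional face in turn: the $(n-1)$-gon on $(2,{\rm I},i)$, the triangle on $(2,{\rm II},i)$, and the quadrilateral on $(2,{\rm III},ij)$. In each case Proposition~\ref{cor:restricted residuals} identifies ${\cal R}(D'_{\geq 0})$ for the $2$-dimensional facet $D'\subset D$. For the polygon case, uniqueness of the restricted adjoint is the classical uniqueness of the $2$-dimensional polypol adjoint from \cite{kohn2021adjoints}. For the triangle and quadrilateral cases, the restricted residual arrangement consists of $n-4$ line or plane sections, and a short dimension count, modelled on the proof of Proposition~\ref{prop:unique adjoint}, shows uniqueness of the corresponding adjoint curve on a plane or on a smooth quadric surface. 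Descending to $1$-dimensional faces of types $(1,{\rm I})$ and $(1,{\rm II})$, the residue becomes a meromorphic $1$-form on $\mathbb{P}^1$ with simple poles at the two boundary vertices identified by Proposition~\ref{prop:boundary faces}, which is determined up to scalar by Example~\ref{ex:canformlinesegment}. One final residue at each boundary vertex yields a scalar, matching Definition~\ref{def:posgeom}.

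The main obstacle will be the bookkeeping of the factorizations of the brackets $\langle AB j(j+1)\rangle$ as one restricts from one boundary stratum to a lower-dimensional one. Many brackets become reducible on the strata, so the denominator of a successive residue a priori carries spurious linear factors that do not correspond to facets of the lower-dimensional face. These spurious factors must be exactly cancelled by the vanishing of the restricted adjoint along components of ${\cal R}(D_{\geq 0})$ that lie outside $D'$, and verifying this cancellation facet-by-facet is the delicate step. It relies both on the explicit description of the restricted residual arrangement in Proposition~\ref{cor:restricted residuals} and on the uniqueness of the adjoint at each level to identify the surviving numerator with the adjoint of the lower-dimensional face.
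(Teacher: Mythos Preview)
Your proposal is correct and follows essentially the same route as the paper: compute the residue along each $(3,{\rm I},i)$, identify the numerator with the restricted adjoint via Corollary~\ref{cor:restricted adjoint} and Proposition~\ref{cor:restricted residuals}, and then descend through the $(2,{\rm I})$, $(2,{\rm II})$, $(2,{\rm III})$ surfaces and the boundary lines, checking in each case that the spurious pole factors cancel against the restricted adjoint. The only cosmetic difference is that the paper handles the triangle and quadrilateral cases by direct cancellation (the restricted adjoint literally equals the product of the $n-4$ residual line/plane sections) rather than by a separate uniqueness argument, so your proposed dimension count there is unnecessary.
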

\begin{proof}
First of all, note that the statement makes sense because the iterated residues of $\Omega({\cal A}_n)$ along closed strata $D$ are defined up to sign (see Remark \ref{rem:sign}).

Consider a quadric threefold $D_i=(3,I,i)\subset \partial_a \cA_n$ and its facet $D_{i,\geq 0}=D_i\cap \cA_n.$ The algebraic boundary $\partial_a D_{\geq 0}$ of $D_{i,\geq 0}$ is the union of boundary surfaces in $D_i$, which by Proposition \ref{cor:restricted residuals} is a union of $n-1$ hyperplane sections. The residual arrangement ${\cal R}(D_{i,\geq 0})$ is clearly contained in the restriction ${\cal R}(\cA_n) \cap D_i$. In fact, by Proposition \ref{cor:restricted residuals}, they coincide.   An adjoint to $\partial_a D_{i,\geq 0}$ is a hypersurface section of degree $n-4$ that interpolates the residual arrangement ${\cal R}(D_{i,\geq 0}).$ By Corollary \ref{cor:restricted adjoint}, this adjoint is unique. We let $\Omega(D_{i,\geq 0})$ be the unique, up to scalar, rational $3$-form with simple poles along $\partial_a D_{i,\geq 0}$ and zeros on the~adjoint. 

To prove the proposition for $D_i$, we must compare $\Omega(D_{i,\geq 0})$ to the residue of $\Omega({\cal A}_n)$ along $D_i$. Since the denominator of $\Omega({\cal A}_n)$ is simple along all its components, the residue along $D_i$ 
is a rational $3$-form whose numerator is a nonzero multiple of the restriction of $\alpha_{{\cal A}_n}$ to $D_i$. In particular, this residue form vanishes on ${\cal R}(D_{i, \geq 0})$. The denominator is the restriction of
$$\prod_{j\not=i} \langle AB j(j+1) \rangle$$
to  $D_{i}$, which cuts out the boundary surface components on the quadric threefold $D_{i}$. By Proposition \ref{prop:boundary faces}, these boundary surface components define a hypersurface section of $D_{i}$ of degree $n-1$. 
Therefore ${\rm Res}_{D_i}\Omega({\cal A}_n)$ coincides, up to scalar multiple, with $\Omega(D_{i,\geq 0})$.

It remains to consider boundary surfaces and boundary curves. 
The boundary surfaces are planes of type $(2,{\rm I})$ or $(2,{\rm II})$ or quadric surfaces of type $(2,{\rm III})$.

In a boundary plane $D \simeq \mathbb{CP}^2$ of type $(2,{\rm I})$,  the face $D_{\geq 0}$ is an $(n-1)$-gon in $\partial{\cal A}_n$, and $\Omega(D_{\geq 0})$ is the canonical form of the positive geometry $(D,D_{\geq 0})$, as in Example \ref{ex:canformpentagon}. 
It has simple poles along the $n-1$ lines and zeros on the adjoint to the $(n-1)$-gon, the unique curve of degree $n-4$ in the plane that interpolates the residual arrangement of the $(n-1)$-gon.

On the other hand, the iterated residue of $\Omega(\cA_n)$ has simple poles along the $n-1$ boundary lines and zeros along the restriction of the adjoint $\{\alpha_{\cA_n}=0\}$, a curve of degree $n-4$ interpolating the restriction of the residual arrangement. This is the residual arrangement of the $(n-1)$-gon (Proposition \ref{cor:restricted residuals}).  We conclude that the iterated residue of $\Omega(\cA_n)$ coincides up to scalar with the canonical $2$-form $\Omega(D_{\geq 0})$ of the positive geometry $(D,D_{\geq 0})$.

Each boundary plane $D$ of type $(2,{\rm II})$ contains a boundary triangle $D_{\geq 0}$ in $\partial{\cal A}_n$ and $n-4$ lines in ${\cal R}({\cal A}_n) \cap D$, by Proposition \ref{cor:restricted residuals}.
The pair $(D,D_{\geq 0})$ is a positive geometry with canonical form $\Omega(D_{\geq 0})$ having simple poles along the lines of the triangle and no zeros.

The iterated residue of $\Omega({\cal A}_n)$ is a $2$-form with poles along the boundary lines in a addition to the $n-4$ lines in the residual arrangement, and zeros along the same $n-4$ lines. After cancellation, this residue coincides, up to scalar, with the canonical form $\Omega(D_{\geq 0})$.

A quadric $(2,{\rm III})$ boundary surface $D$ contains a quadrilateral $D_{\geq 0}=D\cap \partial \cA_n$.  The algebraic boundary $\partial_a D_{\geq 0}$ has no residual arrangement, and the adjoint to the boundary $\partial_a D_{\geq 0}$ of a quadrilateral is a constant, so we let $\Omega(D_{\geq 0})$ be the rational $2$-form with simple poles along $\partial_a D_{\geq 0}$ and no zeros. Notice that $\partial_a D_{\geq 0}$ is the union of two plane sections of $D$.

 The quadric boundary surface $D$ contains $n-4$ plane sections that are curves in the residual arrangement by Proposition \ref{cor:restricted residuals}. This constitutes the vanishing locus of the adjoint $\alpha_{{\cal A}_n}$ in $D$. Together with the two plane boundary curves they form  the intersection of the quadric surface with the quadric boundary threefolds that do not contain the surface.  So the iterated residue of $\Omega({\cal A}_n)$ along $D$ has $n-4$ linear factors in the denominator that cancel all $n-4$ linear factors in the numerator. It thus coincides, up to scalar, with the $2$-form~$\Omega(D_{\geq 0})$.  
 
The boundary curves in $\partial_a{\cal A}_n$ are all lines of type $(1,{\rm I})$ or $(1,{\rm II})$.  Each line $D$ contains a line segment $D_{\geq 0}=D\cap\partial \cA_n$ with two boundary vertices by Proposition \ref{prop:boundary faces} and $n-4$ residual vertices in ${\cal R}({\cal A}_n)$. As in Example \ref{ex:canformlinesegment}, $(D,D_{\geq 0})$ is a positive geometry with a canonical form $\Omega(D_{\geq 0})$ with simple poles at the two boundary vertices and no zeros.

The boundary line $D=(1,{\rm I},i)$ is contained in the two boundary threefolds $(3,{\rm I},i-1)$ and $(3,{\rm I},i)$, the boundary vertices are the intersections with the boundary threefolds $(3,{\rm I},i-2)$ and $(3,{\rm I},i+1)$ and the residual vertices are the intersections with the remaining $n-4$ boundary threefolds.  
A boundary line $D=(1,{\rm II},ij)$, is contained in  the three boundary threefolds $(3,{\rm I},i-1),(3,{\rm I},i)$ and $(3,{\rm I},j)$. The boundary vertices are the intersections with the boundary threefolds $(3,{\rm I},j-1)$ and $(3,{\rm I},j+1)$ and the residual vertices are the intersections with the remaining $n-5$ boundary threefolds in addition to the vertex $(0,{\rm III},ij)$.

The adjoint $\alpha_{{\cal A}_n}$ vanishes on the $n-4$ residual vertices on $D$, so whether $D$ is of type $(1,{\rm I})$ or $(1,{\rm II})$, in the iterated residue $1$-form of $\Omega(\cA_n)$ along the line a cancellation leaves a rational form with simple poles in the boundary vertices and no zeros. This form must therefore coincide, up to scalar, with the canonical form $\Omega(D_{\geq 0})$.
\end{proof}

\begin{proof}[Proof of Theorem~\ref{thm:main}]
Note that the only candidates for the canonical form of ${\cal A}_n$ are scalar multiples of the form $\Omega({\cal A}_n)$ from \eqref{eq:canform}. Indeed, the denominator of \eqref{eq:canform} is fixed by the requirement that the canonical form has simple poles along the algebraic boundary (Proposition \ref{prop:algboundary}). From our proof of Proposition \ref{prop:residueshaverightdivisors}, it follows that the canonical form, if it exists, has zeros along the residual arrangement ${\cal R}({\cal A}_n)$. This uniquely defines the numerator of the rational function in \eqref{eq:canform} by Theorem \ref{adjoint}.

To complete the proof of Theorem \ref{thm:main}, we fix a non-zero scalar for the $4$-form $\Omega({\cal A}_n)$, and  choose a boundary vertex $v$, a boundary line $L_v$ through $v$, a boundary surface $S_v$ containing the boundary line $L_v$ and a boundary threefold $D_v$ containing $S_v$. An orientation on ${\cal A}_n$ induces successively an orientation on the flag of boundary faces, 
$$D_v\supset S_v\supset L_v\ni v$$
at $v$, and hence unique iterated residues of $\Omega({\cal A}_n)$ at each face depending on the flag.  In particular, the residue on $L_v$ is a $1$-form with a simple pole at $v$ and no zeros, and hence a nonzero residue $\lambda_v\in \C$ at $v$.  Thus, we may choose the scalar for $\Omega({\cal A}_n)$ such that the iterated residue $\lambda_v$ equals $1$ at $v$.  
The iterated residue at the other boundary vertex on $L_v$ is then $-1$.
Similarly, by the connectedness of the $1$-skeleton of the boundary, see Corollary \ref{cor:connected one-skeleton}, the iterated residue at any other boundary vertex is $\pm 1$, with sign depending on the flag of boundary faces at the vertex.  
We conclude that we can choose a scalar such that the $4$-form $\Omega({\cal A}_n)$ is a canonical form for $({\rm Gr}_{\mathbb{C}}(2,4),{\cal A}_n)$ as a positive geometry.
\end{proof}

\section*{Acknowledgements}
We are grateful to Nima Arkani-Hamed, Paolo Benincasa and Johannes Henn for suggesting to study the adjoint of the amplituhedron. We want to thank Thomas Lam, Matteo Parisi and Jaroslav Trnka for useful conversations.  

\bibliographystyle{abbrv}
\bibliography{references2.bib}

\noindent{\bf Authors' addresses:}
\medskip

\noindent Kristian Ranestad, University of Oslo
\hfill {\tt ranestad@math.uio.no}

\noindent Rainer Sinn, University of Leipzig
\hfill {\tt rainer.sinn@uni-leipzig.de}

\noindent Simon Telen, MPI-MiS Leipzig
\hfill {\tt simon.telen@mis.mpg.de}
\end{document}